\theoremstyle{plain}
\newtheorem*{theorem*}{Theorem}
\newtheorem{theorem}{Theorem}[section]
\newtheorem{lemma}[theorem]{Lemma}
\newtheorem{proposition}[theorem]{Proposition}
\newtheorem{corollary}[theorem]{Corollary}
\newtheorem{remark}[theorem]{Remark}
\newtheorem{definition}[theorem]{Definition}
\theoremstyle{definition}
\theoremstyle{remark}
\numberwithin{equation}{section}
\newcommand{\Ad}{\mathscr{A}}
\newcommand{\mad}{\mathrm{D}}
\newcommand{\ep}{\varepsilon}
\newcommand{\ffi}{\varphi}
\newcommand{\C}{\mathbb{C}}
\newcommand{\R}{\mathbb{R}}
\newcommand{\N}{\mathbb{N}}
\newcommand{\Z}{\mathbb{Z}}
\newcommand{\F}{\mathcal{F}}
\newcommand{\di}{\textrm{dist}}
\newcommand{\ud}{\;\mathrm{d}}
\newcommand{\Om}{\Omega}
\newcommand{\supp}{\mathrm{supp}\,}
\newcommand{\M}{\mathcal{M}}
\newcommand{\Huno}{\mathcal H^1}
\newcommand{\weakly}{\rightharpoonup}           
\newcommand{\weakstar}{\stackrel{*}{\weakly}}   
\newcommand{\fla}{\stackrel{\mathrm{flat}}{\rightarrow}}
\newcommand{\flt}{\mathrm{flat}}
\newcommand{\rad}{\mathcal Rad}
\newcommand{\Cc}{C_{\mathrm{c}}}
\newcommand{\Ss}{\mathbb{S}}
\newcommand{\strictly}{\overset{\mathrm{strict}}\weakly}
\newcommand{\Ccal}{{\mathscr{C}}}
\newcommand{\B}{\mathscr{B}}
\newcommand{\nulla}{\Ccal_\ep^{0}}
\newcommand{\nonnulla}{\Ccal_\ep^{\neq 0}}
\newcommand{\mino}{\Ccal_\ep^{<}}
\newcommand{\magg}{\Ccal_\ep^{>}}
\newcommand{\ite}{\mathscr{S}_\ep}
\newcommand{\itezero}{\mathscr{I}_\ep}
\newcommand{\iteuno}{\mathscr{I}_\ep^<}
\newcommand{\itedue}{\mathscr{I}_\ep^>}
\newcommand{\hatmino}{\widehat{\Ccal}_\ep^{<}}
\newcommand{\hatmagg}{\widehat{\Ccal}_\ep^{>}}
\newcommand{\hatitezero}{\widehat{\mathscr{I}}_\ep}
\newcommand{\hatiteuno}{\widehat{\mathscr{I}}_\ep^{<}}
\newcommand{\hatitedue}{\widehat{\mathscr{I}}_\ep^{>}}
\newcommand{\hatite}{\widehat{\mathscr{S}}_\ep}
\def\XXint#1#2#3{{\setbox0=\hbox{$#1{#2#3}{\int}$}
     \vcenter{\hbox{$#2#3$}}\kern-.5\wd0}}
\newcommand{\res}{\mathop{\hbox{\vrule height 7pt width .5pt depth 0pt
\vrule height .5pt width 6pt depth 0pt}}\nolimits}
\newcommand{\newatop}{\genfrac{}{}{0pt}{1}}
\def\@splitop#1#2\@nil{$\mathscr{#1}\!\!$\calligra#2\,\,}
\newcommand*\DeclareCursiveOperator[2]{%

 \newcommand#1{\mathop{\mbox{\@splitop#2\@nil}}\nolimits}}
\DeclareCursiveOperator{\Anew}{A}
\DeclareCursiveOperator{\Bnew}{B}
\DeclareCursiveOperator{\Cnew}{C}
\DeclareCursiveOperator{\Dnew}{D}
\DeclareCursiveOperator{\Enew}{E}
\DeclareCursiveOperator{\Qnew}{Q}
\title[Jacobian for $BV$ maps]
{ A new approach to topological singularities via a weak notion of Jacobian for functions of bounded variation}
\author[L. De Luca]
{L. De Luca}
\address[Lucia De Luca]{Istituto per le Applicazioni del Calcolo ``M. Picone'', IAC-CNR, 00185 Rome, Italy}
\email[L. De Luca]{lucia.deluca@cnr.it}
\author[R. Scala]
{R. Scala}
\address[Riccardo Scala]{Dipartimento di Ingegneria dell'Informazione e Scienze Matematiche, Universit\`a di Siena, 53100 Siena, Italy.}
\email[R. Scala]{riccardo.scala@unisi.it}
\author[N. Van Goethem]
{N. Van Goethem}
\address[Nicolas Van Goethem]{Centro de Matem\`atica e Aplica\c{c}\~oes Fundamentais, Universidade de Lisboa, 1749-016, Lisboa, Portugal.}
\email[N. Van Goethem]{vangoeth@fc.ul.pt}
\begin{document}
\begin{abstract}
We introduce a weak  notion of $2\times 2$-minors of gradients of a suitable subclass of $BV$ functions. In the case of maps in $BV(\R^2;\R^2)$ such a notion extends the standard definition of Jacobian determinant to non-Sobolev maps.

We use this distributional Jacobian to prove a compactness and $\Gamma$-convergence result for a new model describing the emergence of topological singularities in two dimensions, in the spirit of Ginzburg-Landau and core-radius approaches. Within our framework, the order parameter is an $SBV$ map $u$ taking values in $\Ss^1$ and the energy is made by the sum of the squared $L^2$ norm of $\nabla u$ and of the length of (the closure of) the jump set of $u$ multiplied by $\frac 1 \ep$\,. Here, $\ep$ is a length-scale parameter.
We show that, in the $|\log\ep|$ regime, the Jacobian distributions converge, as $\ep\to 0^+$\,, to a finite sum $\mu$  of Dirac deltas  with weights multiple of $\pi$, and that the corresponding effective energy is given by the total variation of $\mu$\,.
\vskip5pt
\noindent
\textsc{Keywords}: Functions of Bounded Variation; Strict Convergence; Jacobian determinant;
Topological Singularities;  $\Gamma$-convergence; Ginzburg-Landau Model; Core-Radius Approach. 
\vskip5pt
\noindent
\textsc{AMS subject classifications:}  
49J45   
49Q20 
26B30 
74B15   

\end{abstract}
\maketitle
\tableofcontents
\section*{Introduction}
Topological singularities are ubiquitous in Physics and Materials Science: Vortices in superconductivity and superfluidity and (screw end edge) dislocations in single crystal  plasticity are two paradigmatic examples \cite{AO, HL, HB, Lo1, Lo2, Mermin}.
Loosely speaking, dislocations (resp.,  vortices) are identified as points around which the deformation gradient (resp., the order parameter) has non trivial circulation. 
In view of their central role in the theory of phase transitions, topological singularities  have been the object of an extensive and intensive study in the last decades (see e.g.  \cite{BBH,SS2}).
Such an analysis focused basically on two main models: The core-radius approach (CR) and the Ginzburg-Landau model (GL). 
In the former, a finite distribution of topological singularities is identified by a sum of Dirac deltas (with integer weights) and the energy is assumed to be purely elastic ``far enough'' from such singularities. Hence, the {\it core parameter} $\ep$ is introduced, and the energy takes the form
\begin{equation}\label{def:cra_intro}
\mathcal{E}^{\mathrm{CR}}_{\ep}(\mu,u):=\frac{1}{2}\int_{\Omega_\ep(\mu)}|\nabla u|^2\ud x+|\mu|(\Omega)\,.
\end{equation}
In \eqref{def:cra_intro}, $\mu$ represents the distribution of singularities, $\Omega$ is the reference domain, and $\Omega_\ep(\mu):=\Omega\setminus\bigcup_{\xi\in \supp\mu}\overline{B}_\ep(\xi)$ is the domain deprived of the cores; moreover, $u\in H^1(\Omega_\ep(\mu);\Ss^1)$ represents the order parameter that should be ``compatible'' with the measure $\mu$\,, namely, it should satisfy $\deg(u,\partial A)=\mu(A)$ for every smooth domain $A\subset\Omega$ with {$\partial A\subset \Omega_\ep(\mu)$}\,. 
(CR) is mainly used to describe dislocations in semi-discrete models, i.e., models in which ``far from the singularities'' (in $\Omega_\ep(\mu)$) the material is assumed to be continuous and elastic, whereas ``close to the singularities'' (in $\bigcup_{\xi\in\supp\mu}B_\ep(\xi)$) the material resembles its ``discrete'' nature and the energy behaves like the cardinality of the atoms in the core. Here, (up to a scaling) $\ep$ represents an integer multiple of the lattice spacing

 For screw dislocations in antiplane elasticity it is well known that the displacement  is a multiple of the purely axial Burgers vector times a phase (see, e.g. \cite{VGD}). Let us denote by $w$ a suitable normalization of this vertical displacement and introduce the order parameter $u:=e^{2\pi \imath w}$. Then,  the first addendum in the right-hand side of \eqref{def:cra_intro} is nothing but a suitable normalization of the stored elastic energy outside the cores.
As for the ``plastic term'' $|\mu|(\Omega)$\,, it does not play any role in the asymptotic analysis as $\ep\to 0^+$ and serves only to guarantee compactness; indeed, without such a term a distribution $\mu$ with  $\Omega_\ep(\mu)=\varnothing$ would pay zero energy.

We notice that the scalar field $w$ cannot belong to $H^1(\Omega_\ep(\mu))$\,, since its deformation gradient has a nontrivial circulation condition on a curve enclosing a singularity located at $x^0$; from a modeling point of view, the function $w$ must be in $SBV^2(\Om)$, hence showing a nonzero jump on some  one-dimensional set $S_w$ {which has $x^0$ as one of its endpoints}\,. This jump set represents the (section of) a slip plane, while its amplitude $[w]$ corresponds to the circulation of the absolutely continuous gradient $\nabla w$\,, which is quantized (i.e., it is integer valued), and represents the mismatch between the atomic lattices above and below $S_w$\,.

\medskip
The most celebrated model used to describe topological singularities is, of course, (GL). In such a case, the energy depends only on the order parameter $u\in H^1(\Omega;\R^2)$ which is ``penalized'' (instead of ``constrained'') to take values in $\Ss^1$ and which is defined on the whole $\Omega$\,. The most basic form of the energy is given by  
\begin{equation}\label{def:gl_intro}
\mathcal{E}^{\mathrm{GL}}_{\ep}(u):=\frac{1}{2}\int_{\Omega}|\nabla u|^2\ud x+\frac{1}{\ep^2}\int_{\Omega}\big(1-|u|^2\big)^2\,\ud x\,,
\end{equation}
{where the parameter $\ep$ is called in this case {\it coherence length}.}
Note that the role of the measure $\mu$ in (CR) is actually played in the (GL) model
 by $Ju=\det\nabla u$ which is here a diffuse measure.
 
Let us focus on how the models (CR) and (GL) are able to detect the topological singularities.
Loosely speaking, 
a prototypical topological singularity of degree $z\in\Z\setminus\{0\}$ at a point $x^0\in\Omega$ can be thought of as the
point singularity of a vectorial order parameter $\bar u_\ep:\Omega\to\R^2$ which, in a ball $B_\delta(x^0)$ with $\ep\ll\delta\ll 1$\,, winds around the center as $\big(\frac{x-x^0}{|x-x^0|}\big)^z$\, . The energy of $\bar u_\ep$ diverges at order $|\log\ep|$ as $\ep\to 0^+$\,. As a consequence, to detect the effective
energy cost of finitely many vortex singularities, one needs to study the  (CR) and (GL) energy functionals
at a logarithmic scaling. 
It has been proved (see e.g. \cite{ABO,JS} and references therin)  that a sequence $\{u_\ep\}_\ep$, along which these
energy functionals are equi-bounded, has Jacobians $Ju_\ep$ that, up to a subsequence,
converge in the flat sense (see Section \ref{sec:prelim}) to an atomic measure $\pi\mu=\pi\sum_{i=1}^Iz^{i}\delta_{x^i}$ (here the $x^{i}$ represent the positions of the limiting vortices and $z^i\in\Z\setminus\{0\}$ are their multiplicities). 
The $\Gamma$-limit of $\frac{\mathcal{E}_\ep^{\mathrm{GL}}}{|\log\ep|}$
 with respect to this convergence is then given by $\pi|\mu|$\,. The same analysis has been developed within the (CR) approach in \cite{P}; 
moreover, in \cite{ACP} it is proven that the two models are {\it asymptotically equivalent (in terms of $\Gamma$-convergence)} at energy regime $|\log\ep|^p$ with $p\ge 1$\,.
Variants of the models (CR) and (GL) have been intensively studied; we recall, for instance, discrete linear models for screw dislocations and vortices \cite{AC, P, ADGP, DL} (see also  \cite{ABCDP, ACD} for the case of periodic inhomogeneous materials) and semidiscrete and discrete models for edge dislocations \cite{MSZ, GLP, DGP, ADLPP}. 
\\

In this paper, we are interested in a generalization of the models (CR) and (GL) from a different perspective; specifically, we aim at providing a meaningful energy functional which depends on order parameter $u$ defined on the whole domain $\Omega$, taking values in $\Ss^1$ {and whose approximate gradient is square-integrable in $\Omega$}\,. 
From the point of view of screw dislocations, this means that the jump  $[w]$ of the scalar displacement $w$ cannot\footnote{if it would, in presence of an isolated singularity, the gradient of $u$ would only  be in $L^p$ with $1\le p<2$\,.} be an entire multiple of the Burgers vector (i.e., equivalently, be integer-valued) on the  whole jump set $S_w$\,, since this would lead to a deformation gradient that is non square-integrable. Therefore, the jump $[w]$ of the displacement should exhibit  a transition between integers in a ``small'' subset $\Sigma_{w}$ of $S_w$\,, close to the singularity. 
 In our model this transition has a length of the order $\ep$ and as we let $\ep\to 0^+$ the displacement jump is increasingly ``forced'' to take integer values everywhere on $S_w$\,;
 in other words the total length of $\Sigma_w$  vanishes asymptotically as $\ep\to 0^+$, giving rise to isolated atomic singularities. 
These considerations lead us to consider the following energy
\begin{align}\label{defEne_general_w}
\widehat{\mathcal G}_\ep(w):=\int_\Om\frac12|\nabla w|^2\ud x+\frac1\ep \int_{S_w}W([w])\ud \mathcal H^1\,,
\end{align}   
where the first term is the standard antiplane elastic energy associated to the displacement $w$\,, whereas the second one is a multi-well potential which is null on $\mathbb Z$ and positive elsewhere.  As for instance, one can take the potential 
\begin{equation*}
W(t):=\textrm{dist}^{\alpha}(t,\mathbb Z)\,,\qquad\qquad\textrm{ for some parameter }\alpha\ge 0\,,
\end{equation*}
and hence, passing to $u:=e^{2\pi \imath w}$\,, one arrives at
\begin{equation}\label{potealfa}
W([w]):=|[u]|^\alpha\,.
\end{equation}
{For fixed $\ep$, minimizing \eqref{defEne_general_w} involves a competition between the stored-elastic energy (first term in \eqref{defEne_general_w}) which might be large (if for instance boundary conditions for $u$ are given on $\partial\Omega$) and the creation of a integer displacement mismatch within the otherwise regular crystal lattice (which has an energetical cost given by the  second term in  \eqref{defEne_general_w}).}

Notice that, although potentials as in \eqref{potealfa} are largely used in the context of crack mechanics \cite{BFM}, {with the jump term meaning  the energetic cost of crack creation or propagation, 
} the role of $W$ in our case is a bit different. 
Indeed, here, $W$ penalizes the ``wrong'' mismatch of atomic lattices through the interface $S_w$\,. 
Therefore, passing to $u:=e^{2\pi \imath w}$\,, 
the functional $\widehat{\mathcal G}_\ep$ in \eqref{defEne_general_w} takes the form
\begin{align}\label{defEne_general_w2}
	\mathcal G_\ep(u):=\int_\Om\frac12|\nabla u|^2\ud x+\frac1\ep \int_{S_u}|[u]|^\alpha \ud \mathcal H^1\,.
\end{align}
Clearly, in order to describe the presence of topological singularities, the functional $\mathcal G_\ep$ cannot be restricted to Sobolev functions, but should be defined on the set of $\Ss^1$-valued $SBV^2$ maps\,.

In this paper we focus on the case $\alpha=0$ and define
\begin{equation}\label{defEne_intro}
	\F_\ep(u):= \int_{\Omega}\frac 1 2 |\nabla u|^2\ud x+\frac 1 \ep\Huno(\overline{S}_u)\,.
\end{equation}
{We point out that, at least under some geometric assumptions on $\Om$ and prescribing the set $S_u$ accordingly\footnote{e.g. when $\Om$ is a square, $S_u$ the horizontal axis of $\Om$, and prescribing periodic boundary conditions of $u$.}, one can relate the elastic energy to the $H^{\frac12}$-norm of the jump of $u$, whence leading to 1d models which have been already investigated (see \cite{FG}; related models for different lattice mismatches can be found in \cite{FPP,FPS} and references therein). Related to this setting is \cite{GaMu,CaGa} where a similar analysis is done in 2d.}

 Therefore we aim at studying the asymptotic behavior of $\frac{\F_\ep}{|\log\ep|}$ as $\ep\to 0$\,. 
To this purpose, we first need to introduce a suitable notion of Jacobian for $\Ss^1$-valued $SBV^2$ maps. This task is accomplished in Section \ref{jacobianSBV:sec}, {where we introduce an extension of 
the Jacobian determinant for maps in $BV(\Om;\R^2)\cap L^\infty(\Om;\R^2)$ (see \cite{BrNg1,BrNg2,BOS} and the references therein for fine properties of the Jacobian determinant in Sobolev spaces)}. For a map $u:\Om\rightarrow\R^2$, the distributional Jacobian determinant $\textrm{Det}(\nabla u)$, introduced in \cite{Mor}, is defined as the distribution
\begin{align}\label{Det}
\langle \textrm{Det}(\nabla u),\varphi\rangle_\Om:=\int_\Om\nabla \varphi\cdot \lambda_u\ud x,
\end{align}
where \begin{align}\label{lambda_u}
	\lambda_u:=\frac12\big(-u^1\frac{\partial u^2}{\partial x_2}+u^2\frac{\partial u^1}{\partial x_2},u^1\frac{\partial u^2}{\partial x_1}-u^2\frac{\partial u^1}{\partial x_1}\big);
\end{align}
 it is well-defined under suitable summability assumptions on $u$ ensuring that 
 $\lambda_u\in L^1(\Om;\R^2)$. {Moreover, denoting by $j(u)$ the {\it current} associated to $u$\,, i.e.,  $j(u):=\frac12(u^1\nabla u^2-u^2\nabla u^1)$\,, one has $j(u)^\perp=\lambda_u$ and   it is easy to check that
 $$
 \mathrm{Det}(\nabla u)=-\mathrm{Div}\lambda_u=\mathrm{curl}  j(u)={\frac12\mathrm{curl}(\nabla w)},
 $$
 holds in the sense of distributions, and where $\nabla w$ is the approximate gradient of $w$.}
 In general $\textrm{Det}(\nabla u)$ does not coincide with the pointwise Jacobian determinant $\textrm{det}(\nabla u)$, but they coincide under suitable assumptions on $u$ (see \cite{Mul}). When $u\in BV(\Om;\R^2)$ it is possible to extend definition \eqref{Det} if one shows that $\lambda_u$ is a Radon measure uniquely determined by $u$\,. Such a fact is trivial for smooth maps.
 Hence, arguing by approximation, it is possible to show that if $\{v_k\}_{k\in\N}$ is a sequence of smooth functions approaching $u$ w.r.t. the strict topology of  $BV$\,, then  the distribution $\lambda_u$ is uniquely determined as the limit of the distributions $\lambda_{v_k}$\,. This is a consequence of the results by Jerrard and Jung who showed the existence and uniqueness of a so-called {\it minimal lifting} of $\mad u$, for all $u\in BV(\Om;\R^2)$ (see Theorem \ref{thm:JJ} and \cite{Jerrard} for details).
 
We emphasize that in this planar case with maps with values in $\R^2$, using the aforementioned result of Jerrard and Jung, an extension of distributional determinant for maps of bounded variation has been recently obtained in \cite{M}, under the additional assumption that a map $u$ has finite relaxed graph area (where relaxation is done w.r.t. the strict topology of $BV$). This means that $u$ is approximable strictly in $BV$ by smooth maps $v_k$ with uniformly bounded graphs area (see \cite{BCS,M} for the relaxed area functional). 
 {This implies that the Cartesian current in $\Om\times \R^2$ obtained as the limit of the graphs of the approximating $v_k$ is uniquely determined, which guarantees, in turn, that   the Jacobian of $u$ is always a Radon measure with finite total variation (see \cite{M} for details).} 

However, {requiring only that $u\in BV(\Omega;\R^2)\cap L^\infty(\Omega;\R^2)$\,, the weak Jacobian determinant $Ju$ of $u$ (defined in \eqref{def_dTu_2dim}) is, in general, not a measure,}
but a mere distribution with finite flat norm. 
Moreover, although we will employ the notion of weak Jacobian only for $\Ss^1$- valued maps defined on a planar domain, we point out that the definition of weak Jacobian can be extended to more general (possibly unbounded) functions with bounded variation $u:U\subset\R^n\rightarrow \R^m$, also in dimension and codimension greater than two, allowing to give a notion of weak $2\times2$-minors of $\mad u$\,. {This is stated and proved  in  Theorem \ref{det_teo2}, which also shows that the notion of weak Jacobian extends the distributional determinant in the case of unbounded Sobolev maps.
}

\medskip

Once introduced our weak notion of Jacobian determinant, our main result is Theorem \ref{mainthm}, where we state  compactness and $\Gamma$-convergence for the functionals $\frac{\F_\ep}{|\log\ep|}$\,. As in the classical (CR) and (GL) models, the compactness property shows that, if  $\{u_\ep\}_\ep\subset SBV^2(\Om;\Ss^1)$ with 
\begin{equation}\label{bound_ene_intro}
\sup_{\ep>0}\frac{\mathcal F_\ep(u_\ep)}{|\log\ep|}<+\infty\,,
\end{equation}
then, up to a subsequence, the weak Jacobians $Ju_\ep$ converge in the flat sense to a measure $\pi\mu:=\pi\sum_{n=1}^Nz^n\delta_{x^n}$ with $z^n\in \mathbb Z\setminus\{0\}$\,. 
The starting point of the compactness analysis is that $\Huno(\overline{S}_{u_\ep})\le C\ep|\log\ep|$\,, which guarantees that $\overline{S}_{u_\ep}$ can be covered by a finite (for fixed $\ep$) family of balls such that the sum of its radii vanishes as $\ep\to 0$\,. 
Starting from this family one can use 
the {\it ball construction} technique introduced in \cite{S,J} in order to provide lower bounds for the (CR) and (GL) models. However, such a tool is not enough in order to guarantee compactness in our case.
Indeed, in (GL) the corresponding bound as in \eqref{bound_ene_intro}, implies that $|Ju_\ep|(\Omega)\le C|\log\ep|$\,, which in turns  allows the authors of \cite{AP} to prove that the flat distance between the diffuse Jacobian of $u_\ep$ and the atomic measure provided by the ball construction tends to zero.
In order to bypass the  control on the total variation of the Jacobians (which in our case are not even measures), we adopt the following strategy.
 In order to avoid to estimate the flat distance of the singularities accumulating at the boundary, 
 in our main Theorem \ref{mainthm} we assume that the jump sets of the maps $u_\ep$ are all contained in a fixed set $\Omega'\subset\subset\Omega$\,. In  Theorem \ref{mainthmW} we use this setting  to provide the same result with an applied displacement  boundary condition on $\partial\Omega$ but this time with possibly $S_u\cap\partial\Omega\neq\varnothing$\,.
  Moreover, in order to deal with zero-average clusters contained in $\Omega$\,, we provide an iterative machinery that allows to modify the balls provided by the ball construction in order to get a ``good bound'' of $|\mad u_\ep|$ on the boundary of small regions containing these clusters. {From a physical standpoint these zero-average clusters are related to the so-called {\it statistically-stored} dislocations in single crystals, as opposed to the ({\it geometrically necessary})
  dislocations required to accommodate the incompatible deformations.}
\medskip

Some comments on our approach are in order.
A natural question is whether in \eqref{defEne_intro} one can drop the closure on $S_u$\,, replacing $\Huno(\overline{S}_u)$ with $\Huno(S_u)$\,. This creates some problems in the proof of compactness since, in such a case, the jump set, whose $\Huno$ measure vanishes as $\ep\to 0^+$\,, can be dense in $\Omega$\,.
 In terms of our proof, this means that $S_{u_\ep}$ can be covered by a family of {\it countably many}, not necessarily {\it finitely many}, 
 balls whose sum of the radii vanishes as $\ep\to 0^+$\,.
 Since the ball construction \cite{S,J} requires that the number of initial balls is finite, the problem above requires an extension of the ball construction procedure to the case of countably many initial balls. 
 
Another issue concerns the case   $\alpha>0$ in \eqref{defEne_general_w2}. Also such a problem cannot be studied in full analogy with the one considered here, in view of the 
lack of a-priori control of the $\Huno$-measure of the jump set in that case.
Furthermore, we do believe that our approach could be extended to higher energy regimes as well as
{to} the vectorial case of edge dislocations. This last issue is much more delicate, since it requires the extension of our approach to maps with vectorial lifting. 
{Finally, an extension of our approach to the 3d setting \cite{CGO} deserves future investigations; in such a case, techniques coming from the  theory of integral and Cartesian currents could be exploited, as successfully done in \cite{Hoc, CGM, SVG3, SVG1, SVG2, GMS}.}

\medskip
\textbf{Plan of the paper.} The paper is organized as follows: In Section \ref{sec:prelim} we introduce the notation and preliminary notion used in the sequel. In Section \ref{jacobianSBV:sec} we extend the notion of distributional Jacobian determinant to some classes of maps of bounded variation (see Theorems \ref{det_teo} and \ref{det_teo2}). In Section \ref{sec:model} we finally pass to the $\Gamma$-convergence analysis of the functional \eqref{defEne_intro} and show our main result in Theorem \ref{mainthm}.
\medskip

\textsc{Acknowledgements:}
We thank Marcello Ponsiglione for stimulating discussions on the subject of the paper.
LDL and RS are members of the Gruppo Nazionale per l'Analisi Matematica, la Probabilit\`a e le loro Applicazioni (GNAMPA) of the Istituto Nazionale di Alta Matematica (INdAM).

\section{{Preliminary results}}\label{sec:prelim}
{In this section we collect some preliminary notion on the flat norm of measures and currents, as well as some properties of $BV$ functions that will be used throughout the paper.}
\medskip
\paragraph{\bf Multi-indices.} Let $m,n\geq2$ be two fixed integers.
Let $I\subset\{1,\dots,n\}$ and $J\subset\{1,\dots,m\}$ be multi-indices (in particular, they have a specific order). We denote by $\widehat I$ the ordered set $\{1,\dots,n\}\setminus I$ and $\widehat J=\{1,\dots,m\}\setminus J$. We also denote by $\sigma(I,\widehat I)$ and $\sigma(J,\widehat J)$ the signs of the permutations $(I,\widehat I)\in S(n)$ and $(J,\widehat J)\in S(m)$, where $(I,\widehat I)$ and $(J,\widehat J)$ are seen as the sets $\{1,\dots,n\}$ and $\{1,\dots,m\}$ with a precise order. We will usually deal with the case in which $I=\{i,i'\}$ and $J=\{j,j'\}$ are multi-indices consisting of exactly to distinct elements.
\medskip
\paragraph{\bf {Flat norm of Radon measures}} 
{Let $n\ge 1$ be an integer and let $U\subset\R^n$ be an open and bounded set.} We denote by $\mathcal M_b(U)$ the space of Radon measures on $U$\,. If $\mu\in \mathcal M_b(U)$, we denote by $|\mu|(U)$ the total variation of $\mu$\,. 
We also introduce the concept of flat norm  of a measure $\mu$, noted by $\|\mu\|_{\flt}$:
\begin{align}\label{flat_norm_mu}
	{\|\mu\|_{\flt}:=\sup_{{\newatop{\ffi\in \Cc^{0,1}(U)}{\|\ffi\|_{C^{0,1}(U)}\le 1}}}\int_U\varphi \ud\mu\,,}
\end{align}
{Here and below, the Lipschitz norm $\|\ffi\|_{C^{0,1}(U)}$ is defined by
$$
\|\ffi\|_{C^{0,1}(U)}:=\|\ffi\|_{L^\infty(U)}+\sup_{\newatop{x,y\in U}{x\neq y}}\frac{|\varphi(x)-\varphi(y)|}{|x-y|}\,.
$$}
By a density argument we easily see that the supremum in  \eqref{flat_norm_mu} can be equivalently computed among smooth and compactly supported (in $U$) functions $\varphi$ with {$\|\ffi\|_{C^{0,1}(U)}\le 1$}\,.
\medskip
\paragraph{{\bf Flat norm of $k$-currents.}} {Let $n\ge 2$ be an integer and let $U\subset\R^n$ be an open set}.  For every  $k\in\N$ with $0\leq k\leq n$\,, we denote by $\mathcal D^k(U)$ the topological vector space of smooth and compactly supported $k$-forms on $U$, and by $\mathcal D_k(U)$ its dual, i.e., the space of $k$-currents on $U$. The canonical basis of 1-vectors in {$\R^n$ is denoted} by $\{e_1,\dots,e_n\}$, and its dual basis of 1-covectors by $\{\mathrm{d} x^1,\dots,\mathrm{d} x^n\}$\,.

The mass $|T|$ of a current $T\in \mathcal D_k(U)$ is defined as
$$
|T|=\sup\{\langle T,\omega\rangle:\; \omega\in \mathcal D^k(U),\|\omega\|_{L^\infty}\leq1\}\,.
$$
As for measures, we define the {\it flat norm} of a current $T\in\mathcal D_k(U)$ in $U$ by
\begin{equation}\label{flatcurr}
	\|T\|_{\flt,U}:=\sup_{\newatop{\omega\in\mathcal{D}^k(U)}{\|\omega\|_{F,U}\le 1}}\langle T,\omega\rangle,
\end{equation}
where
$$
\|\omega\|_{F,U}:=\|\omega\|_{L^\infty(U)}+\|\mathrm{d}\omega\|_{L^\infty(U)}\,.
$$
In the special case that $T$ is a $0$-current and has finite mass, then it can be standardly identify with a measure, and  the flat norm of $T$ coincides with the flat norm of the measure $T$ {defined in \eqref{flat_norm_mu}}.
\medskip
\paragraph{\bf Jacobian and $\Ss^1$-valued maps} 
Given a map $u\in W^{1,1}(U;\R^2)\cap L^\infty (U;\R^2)$ we recall that  $Ju=\textrm{Det}(\nabla u)$ is defined by \eqref{Det}. 
Notice that $\textrm{Det}(\nabla u)$ is well-defined also if $u\in W^{1,\frac43}(U;\R^2)$, since by Sobolev embedding theorem we have $\lambda_u\in L^1(U;\R^2)$ (see \eqref{lambda_u}).

In the sequel we will use the fact that a function $u\in H^1(U;\Ss^1)$ satisfies $\textrm{Det}(\nabla u)=0$ in the sense of distributions. 

Moreover, if $u\in H^1(U\setminus\overline B;\Ss^1)$, where $B\subset U$ is a ball, then, integrating by parts,
$$\int_{U\setminus \overline B}\lambda_u\cdot \nabla \varphi\ud x=\int_{\partial B}\lambda_u\cdot \nu \varphi \ud\mathcal H^1=\int_{\partial B}j(u)\cdot \tau \varphi \ud\mathcal H^1,\qquad \forall \varphi\in \Cc^\infty(\Om),$$
where $\nu$ is the inner normal vector to $\partial B$, $\tau=-\nu^\perp$ is the counter-clockwise tangent normal vector to $\partial B$, and $j(u)=\frac12(u^1\nabla u^2-u^2\nabla u^1)=-\lambda_u^\perp$. Notice that $j(u)\cdot \tau=\frac12(u^1\frac{\partial u^2}{\partial \tau}-u^2\frac{\partial u^1}{\partial \tau})$ on $\partial B$.

We recall that $\deg(u,\partial B)\in \mathbb Z$ is defined as
\begin{align}
\deg(u,\partial B):=\frac1\pi\int_{\partial B}j(u)\cdot \tau \ud\mathcal H^1=\frac1\pi\int_{\partial B}\lambda_u\cdot \nu \ud\mathcal H^1\,,\label{def_grado}
\end{align}
whenever $u\in H^{\frac12}(\partial B;\Ss^1)$.

\medskip

\paragraph{\bf Vector-valued $BV$ functions} 
Here we recall some basic facts on vector-valued $BV$-functions that will be used throughout the paper.
We refer to the monograph \cite{AFP} for a complete analysis of $BV$-functions.

Let $m,n\ge 2$ be two integers and let
$\Omega\subset\R^n$ be an open set. Let moreover $u\in BV(\Om;\R^m)$. Then, there is an $\mathcal H^{n-1}$-rectifiable set $S_u\subset\Om$ such that {$\mathcal H^{n-1}$}-a.e. $x\in \Om\setminus S_u$ is a Lebesgue point for $u$\,. For every such $x\in \Om\setminus S_u$\,, we denote by $\overline u(x)\,$, the Lebesgue value of $u$ at $x$\,, so that $\overline u$ turns out to be well-defined $\mathcal H^{n-1}$-almost everywhere on $\Om\setminus S_u$\,. 
The set $S_u$ has the following property: For $\mathcal H^{n-1}$-a.e. $x\in S_u$ there exists a unit vector $\nu(x)\in \R^n$, which is orthogonal to the approximate tangent space to $S_u$ at $x$, and there exist $u^+(x),u^-(x)\in \R^m$ such that the maps 
$\nu,u^+,u^-$ are $\mathcal H^{n-1}$-measurable on $S_u$, and 
$$u^+(x)=\textrm{aplim}_{y\rightarrow x,\;(y-x)\cdot\nu(x)>0}u(y),\qquad u^-(x)=\textrm{aplim}_{y\rightarrow x,\;(y-x)\cdot\nu(x)<0}u(y)\,,
$$  
for $\mathcal H^{n-1}$-a.e. $x\in S_u$.

The distributional gradient $\mad u$ of $u$ reads as
$\mad u=\mad^D u+\mad^S u$, where $\mad^S u$ is the jump part of the gradient provided by
$$
\mad^S u=[u]\otimes \nu\ud\mathcal H^{n-1}\,,
$$
with $[u]:=u^+-u^-$ ($u^\pm$ being the traces of $u$ on the two sides of $S_u$), and $\mad^D u=\mad u\res(\Om\setminus S_u)$ is the diffuse part of the gradient, which satisfies $\mad^D u(A)=0$ if $\mathcal H^{n-1}(A)<\infty$. In turn,
$\mad^D u=\mad^a u+\mad^c u$ splits into two mutually singular measures,  $\mad^a u$ which is  absolutely continuous with respect to {the Lebesgue measure} $\mathcal L^n$, and the Cantor part $\mad^c u$\,.

Eventually, we extend the definition of $\overline u$, which has been defined so far only $\mathcal H^{n-1}$-a.e.  on $\Om\setminus S_u$, by setting
\begin{equation}\label{conunmezzo}
\overline u(x)=\frac12(u^+(x)+u^-(x)), \qquad x\in S_u\,.
\end{equation}
In this way, $\overline u$ is well-defined $\mathcal H^{n-1}$-a.e. on $S_u$ and, in general, $\overline u$ is {well-defined} $|\mad u|$-a.e. in $\Om$.
{Furthermore, for every $\theta\in [0,1]$ and $\mathcal H^{n-1}$-a.e. $x\in S_u$ we define the function $\overline{u}^\theta$ as 
	\begin{equation}\label{contheta}
	\overline{u}^\theta(x):=\theta u^+(x)+(1-\theta)u^-(x)\,,
	\end{equation}
	so that $\overline{u}\equiv\overline{u}^{\frac 1 2}$\,.}	
Finally, we say that a sequence $\{u_k\}_{k\in\N}\subset BV(\Omega;\R^m)$ {\it strictly converges} to $u$ in  $BV(\Om;\R^m)$ (as $k\to +\infty$), and we write $u_k\strictly u$  (as $k\to +\infty$), if $u_k\to u$ (strongly) in $L^1(\Omega;\R^m)$ and $|\mad u_k|(\Omega)\to |\mad u|(\Omega)$  (as $k\to +\infty$)\,.
\medskip
\paragraph{\bf Minimal lifting} Here we deal with maps $v:\Om\rightarrow\R^m$, where $\Om\subset\R^n$ is a bounded open set. We denote by $x=(x_1;\dots;x_n)\in \R^n$ the coordinates in $\R^n$, and $y=(y^1;\dots;y^m)$ those in $\R^m$.
Given $v=(v^1;\ldots;v^m)\in C^1(\Om;\R^m)\cap W^{1,1}(\Om;\R^m)$\,, for every $i=1,\dots,m$ and $j=1,\dots,n$\,,  we define $(\mu_v)^i_j\in \M_b(\Om\times\R^m)$ as the only measure satisfying
\begin{align}\label{muij}
	\int_{\Om\times\R^m}\phi(x,y)\ud(\mu_v)^i_j=\int_\Om\phi(x,v(x))\frac{\partial v^i}{\partial x_j}\ud x\,,\qquad\textrm{for every }\phi\in\Cc(\Omega\times\R^m)\,.
\end{align} 
In \cite{Jerrard} the following result is proved. 
\begin{theorem}\label{thm:JJ}
	Let $u\in BV(\Om;\R^m)$\,. Then, for every $i=1,\dots,m$ and $j=1,\dots,n$\,, there exists a measure $(\mu_u)^i_j\in \mathcal M_b(\Om\times\R^m)$ such that the following holds: if $\{v_k\}_{k\in\N}\subset{C^1}(\Om;\R^m)\cap W^{1,1}(\Om;\R^m)$ satisfies $v_k\strictly v$ (as $k\to +\infty$)\,, then $(\mu_{v_k})^i_j\weakstar(\mu_u)^i_j$ and the $\R^{m\times n}$-valued measure $\mu_{v_k}:=\big((\mu_{v_k})_j^i\big)_{\newatop {i=1,\ldots,m}{j=1,\ldots,n}}$ converges tightly to $\mu_u:=\big((\mu_u)_j^i\big)_{\newatop {i=1,\ldots,m}{j=1,\ldots,n}}$ (as $k\to +\infty$). Furthermore,
	for every $i=1,\dots,m$ and $j=1,\dots,n$\,, it holds
	\begin{align}\label{explicituDu}
		\int_{\Om\times\R^m}\phi(x,y)\ud(\mu_u)^i_j=&\int_{\Om\setminus S_u}\phi(x,\overline u(x))\ud(D^D_ju^i)(x)\nonumber\\
		&+\int_{S_u}\int_{0}^{1}\phi(x,\overline u^\theta(x))\ud\theta\ud(D^S_ju^i)(x)\,,	
	\end{align}
	for every $\phi\in \Cc(\Omega\times\R^m)$\,.
\end{theorem}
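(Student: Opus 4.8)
The plan is to follow the strategy of Jerrard and Jung: first establish existence and uniqueness of the limit measures $(\mu_u)^i_j$ by showing that the assignment $v\mapsto\mu_v$ is continuous with respect to strict convergence, and then identify the limit explicitly via the formula \eqref{explicituDu}. Since the claim is componentwise, I fix $i\in\{1,\dots,m\}$ and $j\in\{1,\dots,n\}$ and write $(\mu_v)^i_j$ simply as the measure on $\Om\times\R^m$ defined by \eqref{muij}; note that its total variation is controlled by $|D^D_ju^i|(\Om)+|D^S_ju^i|(\Om)\le|\mad v|(\Om)$, so along a strictly convergent sequence $v_k\strictly v$ the measures $(\mu_{v_k})^i_j$ have uniformly bounded mass and, being supported in $\Om\times K$ for a fixed compact $K\supset\overline{v(\Om)}$ (using that strict convergence controls the $L^1$, hence a.e. along a subsequence, and in our application the $L^\infty$ bound), admit weak-$*$ convergent subsequences.

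\textbf{Step 1: uniqueness of the limit.} The key point is that the \emph{whole} sequence $(\mu_{v_k})^i_j$ converges, and the limit depends only on $u$, not on the approximating sequence. I would prove this by a mollification/diagonal argument combined with the explicit candidate on the right-hand side of \eqref{explicituDu}: given any $v_k\strictly v=u$, one shows
$$
\int_{\Om\times\R^m}\phi(x,y)\ud(\mu_{v_k})^i_j\longrightarrow \int_{\Om\setminus S_u}\phi(x,\overline u)\ud(D^D_ju^i)+\int_{S_u}\!\int_0^1\phi(x,\overline u^\theta)\ud\theta\ud(D^S_ju^i)
$$
for every $\phi\in\Cc(\Om\times\R^m)$. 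For $\phi$ of the tensor form $\phi(x,y)=\psi(x)\zeta(y)$ this reduces, after writing $\zeta\circ v_k$ as a chain-rule-free object, to the statement that $\zeta(v_k)D_jv_k^i\rightharpoonup$ the corresponding measure, which is exactly the one-dimensional chain rule in $BV$ (Vol'pert / Ambrosio--Dal Maso, see \cite{AFP}): the diffuse part contributes $\zeta(\overline u)D^D_ju^i$ and the jump part contributes the averaged value $\int_0^1\zeta(\overline u^\theta)\ud\theta\,[u^i]\nu_j\,\mathcal H^{n-1}\res S_u$. The subtle part is upgrading from strict convergence of $v_k$ to the convergence of these nonlinear quantities; here one uses that strict convergence of $v_k$ to $u$ forces $|D^D v_k|$ and $|D^S v_k|$ to not ``leak'' into each other in the limit, which is where the hypothesis $|\mad v_k|(\Om)\to|\mad u|(\Om)$ is essential. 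Since the limit is independent of the sequence, every subsequence has the same limit, so the full sequence converges; tightness of the $\R^{m\times n}$-valued measures then follows from weak-$*$ convergence plus convergence of the total masses $|\mu_{v_k}|(\Om\times\R^m)\to|\mu_u|(\Om\times\R^m)$, again a consequence of strict convergence.

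\textbf{Step 2: the explicit formula.} Formula \eqref{explicituDu} is then read off from the limit identity above, since the right-hand side was the candidate used to pass to the limit. It remains to check measurability of $\theta\mapsto\overline u^\theta(x)$ and of $x\mapsto\overline u(x)$ on $S_u$, which is standard from the $\mathcal H^{n-1}$-measurability of $u^\pm$ and $\nu$ recalled in the preliminaries, so that the double integral is well-defined.

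\textbf{Main obstacle.} The delicate step is Step 1, specifically the passage to the limit in the nonlinear term $\zeta(v_k)\,D_jv_k^i$ under mere strict convergence — the diffuse and jump parts of $\mad v_k$ can a priori interact in the limit, and ruling this out (so that the Cantor part gets the value $\zeta(\overline u)$ and the jump part gets the full average $\int_0^1\zeta(\overline u^\theta)\ud\theta$, with no cross terms) is exactly the content of the Jerrard--Jung analysis and the reason strict, rather than weak-$*$, convergence is required. Everything else — mass bounds, compactness, tensor-product density for test functions $\phi$, measurability — is routine. For the details I refer to \cite{Jerrard}.
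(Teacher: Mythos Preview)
The paper does not give its own proof of Theorem \ref{thm:JJ}: it simply states the result and attributes it to Jerrard and Jung \cite{Jerrard}. Your proposal is a reasonable sketch of the argument in that reference and, since you also ultimately defer to \cite{Jerrard} for the details, your treatment and the paper's are aligned.

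One minor point: your claim that the measures $(\mu_{v_k})^i_j$ are supported in $\Om\times K$ for a fixed compact $K$ is not justified by strict convergence alone, since strict convergence in $BV$ does not yield uniform $L^\infty$ bounds (and the theorem is stated for general, possibly unbounded, $u\in BV(\Om;\R^m)$). Weak-$*$ precompactness still follows from the uniform mass bound $|(\mu_{v_k})^i_j|(\Om\times\R^m)\le |\mad v_k|(\Om)\to|\mad u|(\Om)$, but tightness in the $y$-variable needs a separate argument (e.g., controlling the mass of $(\mu_{v_k})^i_j$ on $\Om\times(\R^m\setminus B_R)$ via the $L^1$ convergence $v_k\to u$ and the $BV$ structure). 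This is handled in \cite{Jerrard}, so your deferral covers it, but the parenthetical as written is slightly misleading for the general statement.
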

As a consequence of Theorem \ref{thm:JJ}, it turns out that the $\R^{m\times n}$-valued measure $\mu_u$ is uniquely determined by $u\in BV(\Om;\R^m)$ and is called {\it minimal lifting} of $\mad u$. Moreover, it easily follows that 
if $\{u_k\}_{k\in\N}\subset BV(\Om;\R^m)$ satisfies $u_k\strictly u$ in $BV(\Om;\R^m)$ (as $k\to +\infty$)\,, then $\mu_{u_k}\rightarrow\mu_u$ tightly. 
\section{{Weak $2\times2$-minors of gradients and jacobian determinants of $BV$ functions}} \label{jacobianSBV:sec}
We now see how the notion of minimal lifting {provided by Theorem \ref{thm:JJ}} allows to define,  in a weak sense, the $2\times2$-minors of gradient of suitable $BV$-maps.

{Let $m\ge 2$ and $n\ge 2$ be two integers and let $\Omega\subset\R^n$ be an open and bounded set.
} Given $v\in C^1(\Omega;\R^m)\cap W^{1,1}(\Om;\R^m)$, testing \eqref{muij} with $\phi(x,y)=\ffi(x)\psi(y)$\,, where $\ffi\in\Cc(\Omega)$ and $\psi\in\Cc(\R^m)$\,, we have
\begin{align*}
	\int_{\Om\times\R^m}\ffi(x)\psi(y)\ud(\mu_v)^i_j=\int_\Om\ffi(x)\psi(v(x))\frac{\partial v^i}{\partial x_j}\ud x\,.
\end{align*}
It follows that, if also $v\in  L^\infty(\Om;\R^m)$\,, for every $h=1,\ldots,m$\,, we can take 
$\psi:=\psi^h$ with 
$\psi^h\in \Cc(\R^m)$ such that
$\psi^h(y)=y^h$ in ${\overline{B}_{\|v\|_{L^\infty}}(0)\subset\R^m}$\,, thus obtaining
\begin{align}\label{vDv}
	\int_{\Om\times\R^m}\ffi(x)\psi^h(y)\ud(\mu_v)^i_j=\int_\Om\ffi(x)v^h(x)\frac{\partial v^i}{\partial x_j}\ud x\,.
\end{align}
Now, let $u\in BV(\Om;\R^m)\cap L^\infty(\Om;\R^m)$ and $\{v_k\}_{k\in\N}\subset C^1(\Omega;\R^m){\cap W^{1,1}(\Om;\R^m)\cap L^\infty(\Omega;\R^m)}$ be such that $v_k\strictly u$ as $k\to +\infty$, and $\|v_k\|_{L^\infty}\leq C$ for all $k\geq1$\,.
Then, it is easy to see that  for every $i,h=1,\ldots,m$ and for every $j=1,\ldots,n$, it holds
\begin{equation}\label{intermezzo}
v_k^h(x)\frac{\partial v_k^i}{\partial x_j}\ud x\weakstar (\nu_u)^{i,h}_j\qquad\textrm{as }k\to +\infty\,,
\end{equation}
where the measure $(\nu_u)^{i,h}_j$ is defined by the formula
\begin{align}\label{def_nu}
	\int_\Om\ffi(x)\ud(\nu_u)^{i,h}_j=\int_{\Om\times\R^m}\ffi(x)y^h\ud(\mu_u)^i_j\qquad \forall \ffi\in \Cc(\Om)\,,
\end{align}
for all $i,h\in\{1,\dots,m\}$, $j\in \{1,\dots,n\}$\,. 
{More precisely, we have the following result.}
\begin{corollary}\label{coro}
	Let $u\in BV(\Om;\R^m)\cap L^\infty(\Om;\R^m)$, then there exists a unique measure $\nu_u \in \mathcal M_b(\Om;\R^{m\times n\times m})$ such that, whenever {$\{v_k\}_{k\in\N}\subset C^1(\Om;\R^m)\cap W^{1,1}(\Om;\R^m)\cap L^\infty(\Om;\R^m)$}
	satisfies $\|v_k\|_{L^\infty}\leq C<+\infty$ for all $k\geq1$ and 
	$v_k\strictly u$ in $BV(\Om;\R^m)$\,, then {$v_k\otimes\nabla v_k\rightarrow  \nu_u$}\,, {where $(\nu_u)_{j}^{i,h}$ is defined by \eqref{def_nu} for every $i,h=1,\ldots,m$ and $j=1,\ldots,n$\,.}  
	Finally, if $\{u_k\}_{k\in\N}\subset BV(\Om;\R^m)\cap L^\infty(\Om;\R^m)$ with 
	\begin{equation}\label{Linftybound}
	\|u_k\|_{L^\infty(\Omega;\R^m)}\leq C
	\end{equation}
	 for some constant $C>0$, and $u_k\strictly u$ in $BV(\Om;\R^m)$\,, then 
	\begin{equation}\label{convdebstar}
	\nu_{u_k}\weakstar\nu_u\qquad\textrm{in }\mathcal M_b(\Om;\R^{m\times n\times m})\,.
	\end{equation}
\end{corollary}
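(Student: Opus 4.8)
The plan is to build $\nu_u$ out of the minimal lifting $\mu_u$ furnished by Theorem~\ref{thm:JJ} and then to transfer the stability properties of $\mu_u$ to $\nu_u$ via the linear ``projection'' \eqref{def_nu}. Concretely, for $u\in BV(\Om;\R^m)\cap L^\infty(\Om;\R^m)$ I would first note that, since $\spt\mu_u\subset\Om\times\overline B_{\|u\|_{L^\infty}}(0)$ (this localization of the support of the minimal lifting follows from the explicit formula \eqref{explicituDu}, because $\overline u$ and $\overline u^\theta$ are convex combinations of traces of $u$ and hence bounded by $\|u\|_{L^\infty}$), the right-hand side of \eqref{def_nu} makes sense for every $\ffi\in\Cc(\Om)$ and defines a bounded linear functional, so Riesz representation yields a unique $(\nu_u)^{i,h}_j\in\M_b(\Om)$; assembling these over $i,h\in\{1,\dots,m\}$ and $j\in\{1,\dots,n\}$ gives the desired $\nu_u\in\M_b(\Om;\R^{m\times n\times m})$, and the bound $|(\nu_u)^{i,h}_j|(\Om)\le\|u\|_{L^\infty}|(\mu_u)^i_j|(\Om\times\R^m)$ is immediate.

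Next I would establish the approximation statement. Given $\{v_k\}\subset C^1\cap W^{1,1}\cap L^\infty$ with $\|v_k\|_{L^\infty}\le C$ and $v_k\strictly u$, Theorem~\ref{thm:JJ} gives $\mu_{v_k}\to\mu_u$ tightly (and $(\mu_{v_k})^i_j\weakstar(\mu_u)^i_j$). By \eqref{vDv}, testing with $\ffi(x)\psi^h(y)$ where $\psi^h\in\Cc(\R^m)$ agrees with $y\mapsto y^h$ on $\overline B_{C}(0)$, one has $\int_\Om\ffi v_k^h\,\tfrac{\partial v_k^i}{\partial x_j}\ud x=\int_{\Om\times\R^m}\ffi(x)\psi^h(y)\ud(\mu_{v_k})^i_j$. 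The point is that tight convergence of $\mu_{v_k}$ lets one pass to the limit against the \emph{bounded continuous} (not compactly supported) integrand $\ffi(x)\psi^h(y)$; the limit is $\int_{\Om\times\R^m}\ffi(x)\psi^h(y)\ud(\mu_u)^i_j$, which equals $\int_{\Om\times\R^m}\ffi(x)y^h\ud(\mu_u)^i_j=\int_\Om\ffi\ud(\nu_u)^{i,h}_j$ because $\spt\mu_u$ is contained in the region where $\psi^h(y)=y^h$. Hence $v_k^h\,\tfrac{\partial v_k^i}{\partial x_j}\ud x\weakstar(\nu_u)^{i,h}_j$, i.e.\ $v_k\otimes\nabla v_k\to\nu_u$ weakly-$*$; this also shows that the limit does not depend on the chosen approximating sequence, giving uniqueness.

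For the final assertion, take $\{u_k\}\subset BV\cap L^\infty$ with $\|u_k\|_{L^\infty}\le C$ and $u_k\strictly u$. From Theorem~\ref{thm:JJ} (and the remark following it) we have $\mu_{u_k}\to\mu_u$ tightly, and all these measures are supported in $\Om\times\overline B_C(0)$. Testing \eqref{def_nu} with $\ffi\in\Cc(\Om)$ and using again a cutoff $\psi$ with $\psi(y)=y^h$ on $\overline B_C(0)$, one writes $\int_\Om\ffi\ud(\nu_{u_k})^{i,h}_j=\int_{\Om\times\R^m}\ffi(x)\psi^h(y)\ud(\mu_{u_k})^i_j$, and tight convergence of $\mu_{u_k}$ passes to the limit against the bounded continuous integrand, yielding $\int_\Om\ffi\ud(\nu_{u_k})^{i,h}_j\to\int_\Om\ffi\ud(\nu_u)^{i,h}_j$; this is exactly \eqref{convdebstar}.

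The main obstacle is the same subtlety in both limit passages: weak-$*$ convergence of $\mu_{v_k}$ (or $\mu_{u_k}$) alone does \emph{not} allow testing against the function $y^h$, which is unbounded on $\R^m$ and not compactly supported, so one genuinely needs the \emph{tight} (no-mass-escape-to-infinity) convergence provided by Theorem~\ref{thm:JJ}, together with the uniform-in-$k$ confinement of the supports $\spt\mu_{v_k}\subset\Om\times\overline B_C(0)$ coming from the $L^\infty$ bound; these two ingredients are what make the cutoff argument legitimate. Everything else (Riesz representation, the elementary estimate on total variations, independence of the approximating sequence) is routine.
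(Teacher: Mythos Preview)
Your proposal is correct and follows essentially the same route as the paper: define $(\nu_u)^{i,h}_j$ via \eqref{def_nu}, and for both convergence claims replace $y^h$ by a fixed cutoff $\psi^h\in\Cc(\R^m)$ that agrees with $y^h$ on $\overline B_C(0)$, so that the uniform $L^\infty$ bound guarantees $\int\ffi\,\psi^h\,\ud(\mu_{\cdot})^i_j=\int\ffi\,\ud(\nu_\cdot)^{i,h}_j$ for every term in the sequence and for the limit, and then pass to the limit via Theorem~\ref{thm:JJ}. One small remark: you overstate the ``main obstacle''---once the cutoff $\psi^h$ is in place, the test function $\ffi(x)\psi^h(y)$ \emph{is} compactly supported in $\Om\times\R^m$, so ordinary weak-$*$ convergence of $(\mu_{u_k})^i_j$ already suffices and tight convergence is not genuinely needed; the uniform support confinement is what makes the cutoff identity valid, not what is required to pass to the limit.
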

\begin{proof}
	The first part of the statement follows straightforwardly from {the discussion above (see formulas \eqref{vDv}, \eqref{intermezzo}, \eqref{def_nu})}. Let us comment on the last claim, and assume $u_k\strictly u$ with $\{u_k\}_{k\in\N}\subset BV(\Om;\R^m)\cap L^\infty(\Om;\R^m)$. 
	{Let $\ffi\in\Cc(\Omega)$\,, $i,h\in\{1,\ldots,m\}$ and $j\in\{1,\ldots,n\}$\,.
	Let moreover $\psi^h\in\Cc(\R^m)$ be such that $\psi^h=y^h$ inside $B_{C}(0)\subset\R^m$\,, where $C$ is the constant in \eqref{Linftybound}.
By Theorem \ref{thm:JJ}, we get 
	\begin{equation}\label{20220427}
	\lim_{k\to +\infty}\int_{\Om\times\R^m}\ffi(x)\psi^h(y)\ud(\mu_{u_k})^i_j= \int_{\Om\times\R^m}\ffi(x)\psi^h(y)\ud(\mu_{u})^i_j\,.
	\end{equation}
	On the other hand, in view of \eqref{Linftybound}, by \eqref{def_nu}, we have that
	\begin{equation*}
	\begin{aligned}
	\int_{\Om\times\R^m}\ffi(x)\psi^h(y)\ud(\mu_{u_k})^i_j=&\int_{\Omega}\ffi(x)\ud(\nu_{u_k})^{i,h}_j\\
	\int_{\Om\times\R^m}\ffi(x)\psi^h(y)\ud(\mu_{u})^i_j=&\int_{\Omega}\ffi(x)\ud(\nu_u)^{i,h}_j,
	\end{aligned}
	\end{equation*}
	which, together with \eqref{20220427}, implies that
	\begin{equation*}
	\int_{\Omega}\ffi(x)\ud(\nu_{u_k})^{i,h}_j\overset{k\to +\infty}{\longrightarrow}\int_{\Omega}\ffi(x)\ud(\nu_{u})^{i,h}_j\,.
	\end{equation*}
	By the arbitrariness of $\ffi$ we get \eqref{convdebstar}.}
\end{proof}
In view of Corollary \ref{coro}, {for every $u\in BV(\Omega;\R^m)\cap L^\infty(\Omega;\R^m)$\,,} we are allowed to adopt the following notation
 \begin{align*}
	[u^h\mad_{j}u^i]:=(\nu_u)^{i,h}_j,\;\;\;\;i,h\in\{1,\dots,m\},\;\;\;\;j\in \{1,\dots,n\}\,,
\end{align*}
and
$$
[u\otimes\mad u]:=\nu_u:=\Big((\nu_u)^{i,h}_j\Big)_{\newatop{i,h\in\{1,\ldots,m\}}{j\in\{1,\ldots,n\}}}\,.
$$
{Let $u\in BV(\Omega;\R^m)\cap L^\infty(\Omega;\R^m)$
and} let us fix four indices $i,i'\in \{1,\dots,m\}$ and $j,j'\in \{1,\dots,n\}$\,. {We set} $I:=\{i,i'\}$, $J:=\{j,j'\}$, and consider the measures
 $\lambda^I_j,\lambda^I_{j'}\in \mathcal M_b(\Om)$ given by 
\begin{equation}\label{def_lambda}
\begin{aligned}
	\lambda^I_j:=&\frac12\big([u^i\mad_ju^{i'}]-[u^{i'}\mad_ju^{i}]\big)=\frac12((\nu_u)_j^{i',i}-(\nu_u)_j^{i,i'})\,,\\
	\lambda^I_{j'}:=&\frac12\big([u^i\mad_{j'}u^{i'}]-[u^{i'}\mad_{j'}u^{i}]\big)=\frac12((\nu_u)_{j'}^{i',i}-(\nu_u)_{j'}^{i,i'})\,.
\end{aligned}
\end{equation}
%
%
{We define the current $(T_u)_{I,J}\in\mathcal D_1(\Om)$ as}
\begin{align}\label{def_Tu}
	(T_u)_{I,J}:=&\sigma^I_J(\lambda_j^Ie_{j'}-\lambda_{j'}^Ie_j)\nonumber\\
	=&\sigma_J^I\frac12\big([u^i\mad_ju^{i'}]-[u^{i'}\mad_ju^{i}]\big)e_{j'}-\sigma_J^I\frac12\big([u^i\mad_{j'}u^{i'}]-[u^{i'}\mad_{j'}u^{i}]\big)e_j,
\end{align}
where $\sigma_J^I:=\sigma(I,\widehat I)\sigma(J,\widehat J)\in \{-1,1\}$\,. The $1$-current $(T_u)_{I,J}$ acts on $1$-forms $\omega\in \mathcal D^1(\Om)$ with $\omega=\sum_h \omega_h\ud x^h$, $\omega_h\in \Cc^\infty(\Om\times\R^m)$, as
\begin{align}\label{def_Tu2}
	(T_u)_{I,J}(\omega)&=\sigma_J^I\Big(\int_\Om\omega_{j'}\ud\lambda_j^I-\int_\Om \omega_j\ud\lambda_{j'}^I\Big)\nonumber\\
	&=\frac{\sigma_J^I}{2}\int_\Om \omega_{j'}\ud([u^i\mad_ju^{i'}]-[u^{i'}\mad_ju^{i}])-\frac{\sigma_J^I}{2}\int_\Om\omega_{j}\ud([u^i\mad_{j'}u^{i'}]-[u_{i'}\mad_{j'}u_{i}]).
\end{align}
A consequence of Corollary \ref{coro} is that $(T_u)_{I,J}$
is well-defined (for all $u\in BV(\Om;\R^m)\cap L^\infty(\Om;\R^m)$), it is a Radon measure on the space of compactly supported and  continuous $1$-forms and is weakly continuous with respect to strict {convergence} of $u_k$ to $u$ in $BV(\Om;\R^m)$, provided that $u_k$ are uniformly bounded in $L^\infty(\Om;\R^m)$.
Therefore we arrive at the following {result.}
\begin{theorem}\label{det_teo}
	For all $u\in BV(\Om;\R^m)\cap L^\infty(\Om;\R^m)$ and for all $I= \{i,i'\}\subset\{1,\dots,n\}$, $J= \{j,j'\}\subset\{1,\dots,m\}$, the current $\partial (T_u)_{I,J}\in \mathcal D_0(\Om)$ is uniquely determined as the boundary of the current $(T_u)_{I,J}$ given  by \eqref{def_Tu2}. Moreover,
	 \begin{equation}\label{flat_est}
	 \|\partial(T_u)_{I,J}\|_{\flt,\Omega}\le C\|u\|_{L^\infty(\Omega;\R^m)}|\mad u|(\Om)\,,
	 \end{equation}
	 for some universal constant $C>0$ (independent of $u$)\,.
\end{theorem}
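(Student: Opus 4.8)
The plan is to establish Theorem \ref{det_teo} in two stages: first the well-posedness of $(T_u)_{I,J}$ and hence of $\partial(T_u)_{I,J}$, and then the flat-norm estimate \eqref{flat_est}. For the first stage I would simply collect what has already been built: Corollary \ref{coro} gives that the measures $(\nu_u)^{i,h}_j$ are uniquely determined by $u$, hence so are the measures $\lambda^I_j,\lambda^I_{j'}$ in \eqref{def_lambda} and the $1$-current $(T_u)_{I,J}$ in \eqref{def_Tu2}; taking the distributional boundary $\partial$ is a continuous operation on $\mathcal D_1(\Omega)$, so $\partial(T_u)_{I,J}\in\mathcal D_0(\Omega)$ is well-defined. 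The weak continuity with respect to strict convergence of uniformly $L^\infty$-bounded sequences follows again from Corollary \ref{coro} together with the continuity of $\partial$. So the only real content is the inequality \eqref{flat_est}.

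For the flat-norm estimate the natural strategy is to prove it first for smooth maps and then pass to the limit. Let $v\in C^1(\Omega;\R^m)\cap W^{1,1}(\Omega;\R^m)\cap L^\infty(\Omega;\R^m)$. Then $(T_v)_{I,J}$ is an honest $L^1$ vector field, $\partial(T_v)_{I,J}=-\mathrm{Div}(T_v)_{I,J}$, and a direct computation (the classical identity that the $2\times2$ minor $M^{ii'}_{jj'}(\nabla v)$ is a null Lagrangian, i.e. a divergence) shows that $\partial(T_v)_{I,J}$ is, up to sign, the distribution $\varphi\mapsto\int_\Omega M^{ii'}_{jj'}(\nabla v)\,\varphi\,\mathrm dx$ rewritten after one integration by parts as $-\int_\Omega (\text{minor expression})\cdot\nabla\varphi\,\mathrm dx$ with the ``minor expression'' being bilinear of the form $v^i\partial v^{i'}$. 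Testing against $\varphi\in\Cc^\infty(\Omega)$ with $\|\varphi\|_{C^{0,1}}\le1$ and using that $|v^i|\le\|v\|_{L^\infty}$ and $|\nabla\varphi|\le1$ gives $\langle\partial(T_v)_{I,J},\varphi\rangle\le C\|v\|_{L^\infty(\Omega;\R^m)}\int_\Omega|\nabla v|\,\mathrm dx=C\|v\|_{L^\infty}|\mathrm Dv|(\Omega)$, which is exactly \eqref{flat_est} for smooth $v$. For general $u\in BV\cap L^\infty$, choose by the standard mollification/truncation argument a sequence $v_k\in C^1\cap W^{1,1}\cap L^\infty$ with $v_k\strictly u$ and $\|v_k\|_{L^\infty}\le\|u\|_{L^\infty}$ (truncating the mollifications does not increase the $L^\infty$ norm and preserves strict convergence); apply Corollary \ref{coro} to get $(T_{v_k})_{I,J}\to(T_u)_{I,J}$, hence $\partial(T_{v_k})_{I,J}\to\partial(T_u)_{I,J}$ in $\mathcal D_0(\Omega)$, i.e. $\langle\partial(T_{v_k})_{I,J},\varphi\rangle\to\langle\partial(T_u)_{I,J},\varphi\rangle$ for each fixed test $\varphi$; taking the limit in the smooth estimate and then the supremum over $\varphi$ with $\|\varphi\|_{C^{0,1}}\le1$ yields \eqref{flat_est} using $|\mathrm Dv_k|(\Omega)\to|\mathrm Du|(\Omega)$.

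I expect the main obstacle to be bookkeeping rather than conceptual: one has to be careful that the flat norm on $0$-currents in \eqref{flatcurr} really does reduce to the measure flat norm \eqref{flat_norm_mu} — which is noted in the excerpt only when the $0$-current has finite mass, whereas $\partial(T_u)_{I,J}$ need not have finite mass. The clean way around this is to work directly from definition \eqref{flatcurr}: a $0$-form is just a function $\varphi$, $\|\varphi\|_{F,\Omega}=\|\varphi\|_{L^\infty}+\|\nabla\varphi\|_{L^\infty}$, and $\langle\partial(T_u)_{I,J},\varphi\rangle=\langle(T_u)_{I,J},\mathrm d\varphi\rangle$, so one estimates $|\langle(T_v)_{I,J},\mathrm d\varphi\rangle|$ directly by $\|\mathrm d\varphi\|_{L^\infty}$ times the mass bound $|(T_v)_{I,J}|(\Omega)\le C\|v\|_{L^\infty}|\mathrm Dv|(\Omega)$ coming from \eqref{def_Tu}–\eqref{def_lambda}. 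This avoids the density subtlety entirely and makes the passage to the limit transparent. A secondary point to handle carefully is tracking the harmless sign factor $\sigma^I_J$ and the factor $\tfrac12$ through \eqref{def_Tu2} so that they get absorbed into the universal constant $C$, and checking that the estimate is genuinely independent of $I,J$ (it is, since each entry of $\nu_u$ is controlled by $\|u\|_{L^\infty}|\mathrm Du|(\Omega)$ via \eqref{def_nu} and Theorem \ref{thm:JJ}).
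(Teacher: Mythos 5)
Your proposal is correct, and the ``clean way'' you settle on at the end — testing $\partial(T_u)_{I,J}$ against a $0$-form $\varphi$, writing $\partial(T_u)_{I,J}(\varphi)=(T_u)_{I,J}(\mathrm d\varphi)$ in terms of the measures $\lambda^I_j,\lambda^I_{j'}$, and bounding their total variations by $C\|u\|_{L^\infty}|\mathrm{D}u|(\Om)$ via Corollary \ref{coro} — is exactly the paper's (very terse) argument. The preliminary smooth-approximation route you sketch first is sound but unnecessary, as you yourself observe.
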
 
\begin{proof}
The first part of the statement is clear. Let us prove \eqref{flat_est}:
Let $\varphi\in \Cc^\infty(\Om)$ and consider the $0$-form $\omega=\varphi$\,, whose differential is $\mathrm d\omega=\sum_h\frac{\partial \varphi}{\partial x_h}\ud x^h$. Hence, by definition of boundary of a current 
\begin{align}\label{formulaabove}
	\partial(T_u)_{I,J}(\omega)&=(T_u)_{I,J}(\mathrm d\omega)=\int_\Om \frac{\partial\varphi}{\partial x_{j'}}\ud\lambda_j^I-\int_\Om\frac{\partial\varphi}{\partial x_{j}}\ud\lambda_{j'}^I\nonumber\\
	&=\frac{\sigma_J^I}{2}\int_\Om \frac{\partial\varphi}{\partial x_{j'}}\ud([u^i\mad_ju^{i'}]-[u^{i'}\mad_ju^{i}])-\frac{\sigma_J^I}{2}\int_\Om\frac{\partial\varphi}{\partial x_{j}}\ud([u^i\mad_{j'}u^{i'}]-[u^{i'}\mad_{j'}u^{i}])\,,
\end{align}
from which \eqref{flat_est} follows by definition of flat norm.
\end{proof}
We can then introduce the definition of weak $2\times2$-minors of $Du$:
\begin{definition}\label{def_2x2minor}
Let $u\in BV(\Om;\R^m)\cap L^\infty(\Om;\R^m)$ and let  $I= \{i,i'\}\subset\{1,\dots,n\}$, $J= \{j,j'\}\subset\{1,\dots,m\}$. Then we define $M^J_I(Du)\in \mathcal D'(\Om)$ the $2\times2$-minor of $Du$ with rows in $I$ and columns in $J$ as the distribution
\begin{align}\label{def_minor}
	\langle M^J_I(Du),\varphi\rangle_\Om:&= \int_\Om \frac{\partial\varphi}{\partial x_{j'}}\ud\lambda_j^I-\int_\Om\frac{\partial\varphi}{\partial x_{j}}\ud\lambda_{j'}^I\nonumber\\
	&=\frac{\sigma_J^I}{2}\int_\Om \frac{\partial\varphi}{\partial x_{j'}}\ud([u^i\mad_ju^{i'}]-[u^{i'}\mad_ju^{i}])-\frac{\sigma_J^I}{2}\int_\Om\frac{\partial\varphi}{\partial x_{j}}\ud([u^i\mad_{j'}u^{i'}]-[u^{i'}\mad_{j'}u^{i}])\,,
\end{align}
for all $\varphi\in \mathcal D(\Om)$,
where $\lambda^I_j$ and $\lambda^I_{j'}$ are defined in \eqref{def_lambda}.
\end{definition} 
%
%
%
%
{Notice that, if $v\in C^1(\Omega;\R^m)\cap W^{1,1}(\Om;\R^m)\cap L^\infty(\Omega;\R^m)$\,, by \eqref{formulaabove}, \eqref{def_minor}, and \eqref{vDv}, we get} 
\begin{align*}
	M^J_I(\nabla v)=&\frac{\sigma_J^I}{2}\int_\Om \frac{\partial\varphi}{\partial x_{j'}}\Big(v^i\frac{\partial v^{i'}}{\partial x_j}-v_{i'}\frac{\partial v^{i}}{\partial x_j}\Big)\ud x-\frac{\sigma_J^I}{2}\int_\Om\frac{\partial\varphi}{\partial x_{j}}\Big(v^i\frac{\partial v^{i'}}{\partial x_{j'}}-v^{i'}\frac{\partial v^{i}}{\partial x_{j'}}\Big)\ud x\nonumber\\
	&=\sigma_J^I\int_\Om\varphi(x)\Big(\frac{\partial v^{i}}{\partial x_j}(x)\frac{\partial v^{i'}}{\partial x_{j'}}(x)-\frac{\partial v^{i}}{\partial x_{j'}}(x)\frac{\partial v^{i'}}{\partial x_j}(x) \Big)\ud x\,,
\end{align*} 
namely, the distribution $M^J_I(\nabla v)$ coincides with the $(2\times2)$-subdeterminant of the matrix $\nabla v$ with rows in $I$ and columns in $J$\,.
\medskip
\subsection{An extension of the $2\times2$-minors for more general $BV$ maps}
In this section we refine the result in Theorem \ref{det_teo} {weakening} the hypothesis that $u\in L^\infty(\Om;\R^m)$\,.
To this purpose, we preliminarily notice that,  if $u\in BV(\Om;\R^m)\cap L^\infty(\Om;\R^m)$\,, we can take $\phi(x,y)=\ffi(x)y^{h}$ in \eqref{explicituDu}\,, with $\ffi\in\Cc(\Omega)$ and $h\in\{1,\ldots,m\}$\,.
From this expression, using \eqref{def_nu}, we obtain
\begin{align}\label{explicitnu}
	\int_{\Om}\ffi(x)\ud(\nu_u)^{i,h}_j&=\int_{\Om\times\R^m}\ffi(x)y^h\ud(\mu_u)^i_j\nonumber\\
	&=\int_{\Om\setminus S_u}\ffi(x)\overline u^h(x)\ud(\mad^D_ju^i)+\int_{S_u}\ffi(x)\int_{0}^{1} (\overline{u}^\theta)^h(x)\ud\theta \ud(\mad^S_ju^i)\nonumber\\
	&=\int_{\Om\setminus S_u}\ffi(x)\overline u^h(x)\ud(\mad_ju^i)+\int_{S_u}\ffi(x) \overline{u}^h(x)\ud(\mad_ju^i)\,,	
\end{align}
where $\overline{u}^\theta=((\overline{u}^\theta)^1;\ldots; (\overline{u}^\theta)^m)$ is defined in \eqref{contheta}.
Let now $u\in BV(\Om;\R^m)$ and assume that
\begin{equation}\tag{P}\label{property}
\begin{aligned}
&\textrm{ For some  $i,i'\in \{1,\dots,m\}$ with $i\neq i'$ and  $j,j'\in\{1,\dots,n\}$ with $j\neq j'$\,, it holds}\\
&		\phantom{\textrm{ For every $i,i'\in \{1,\dots,m\}$}}y^i\in L^1(\Om\times\R^m,(\mu_u)^{i'}_j)\cap L^1(\Om\times\R^m,(\mu_u)^{i'}_{j'})\,,\\
&		\phantom{\textrm{ For every $i,i'\in \{1,\dots,m\}$}}y^{i'}\in L^1(\Om\times\R^m,(\mu_u)^{i}_j)\cap L^1(\Om\times\R^m,(\mu_u)^{i}_{j'})\,,\\
&\textrm{{where $L^1(\Om\times\R^m,\mu)$ denotes the space of $L^1$ functions with respect to the measure $\mu$\,.}}
\end{aligned}
\end{equation}
We emphasize that if $u\in BV(\Om;\R^m)\cap L^\infty(\Om;\R^m)$ then property \eqref{property} is readily satisfied, since $\supp{\mu_u}$ is bounded in $\Om\times\R^m$. 
{
In Theorem \ref{det_teo2} below, we will show that property \eqref{property} is enough to ensure the well-posedness of the definition of the current \eqref{def_Tu} as well as to guarantee that it and its boundary have flat norm. 
To this purpose, we preliminarily notice that, in view of \eqref{property}, by \eqref{explicituDu},} for every $\phi\in \Cc(\Om\times\R^m)$ of the form $\phi(x,y)=\ffi(x)\psi(y)|y^h|$\,, it holds
\begin{equation}\label{consequence_P}
\begin{aligned}
	&\int_{\Om\times\R^m}\ffi(x)\psi(y)|y^h|\ud(\mu_u)^i_j\\
	=&\int_{\Om\setminus S_u}\ffi(x)\psi(\overline u(x))|\overline u^h(x)|\ud(\mad_ju^i)+\int_{S_u}\ffi(x)\int_{0}^{1}\psi(\overline{u}^\theta(x))| (\overline{u}^h)^\theta(x)|\ud\theta\ud(\mad_ju^i)\\
	\leq&\int_{\Om\setminus S_u}|\overline u^h(x)|\ud|\mad_ju^i|+\int_{S_u}\int_{0}^{1}| (\overline{u}^\theta)^h(x)|\ud\theta\ud|\mad_ju^i|.
\end{aligned}
\end{equation}
Therefore, we can take the supremum of the left-hand side among all $\ffi\in \Cc(\Omega;[-1,1])$ and $\psi\in \Cc(\R^m;[-1,1])$\,, and by standard density arguments we get  
\begin{align}\label{totalvariation}
	\int_{\Om\times\R^m}|y^h|\ud|(\mu_u)^i_j|\leq \int_{\Om\setminus S_u}|\overline u^h(x)|\ud|\mad_ju^i|+\int_{S_u}\int_{0}^{1}| (\overline{u}^\theta)^h(x)|\ud\theta\ud|\mad_ju^i|.
\end{align}
In particular, if $u\in BV(\Om;\R^m)$ satisfies \eqref{property}, then the measure $y^h\cdot (\mu_u)_j^i$ has finite total variation which is bounded by the right-hand side of \eqref{totalvariation}.
Now we show that \eqref{totalvariation} holds actually with equality.

\begin{lemma}
	Let $u\in BV(\Om;\R^m)$ and let $i,i'\in \{1,\dots,m\}$ with $i\neq i'$ and  $j,j'\in\{1,\dots,n\}$ with $j\neq j'$ be such that \eqref{property} holds. Then for $h\in \{i,i'\}$ it holds
	\begin{align}\label{totalvariation_eq}
		\int_{\Om\times\R^m}|y^h|\ud|(\mu_u)^i_j|= \int_{\Om\setminus S_u}|\overline u^h(x)|\ud|\mad_ju^i|+\int_{S_u}\int_{0}^{1}| (\overline{u}^\theta)^h(x)|\ud\theta\ud|\mad_ju^i|.
	\end{align}
In particular it follows
\begin{align}\label{integrability}
	\overline u^{h}\in L^1(\Om\setminus S_u,|\mad^D_ju^i|), \qquad \overline u^h\in L^1(S_u,|\mad^S_ju^i|)\,,
\end{align}
where  the $\overline u$ is defined on $S_u$ by \eqref{conunmezzo}.
\end{lemma}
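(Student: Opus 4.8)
The plan is to prove the equality \eqref{totalvariation_eq} by establishing the reverse inequality to \eqref{totalvariation}, since the inequality ``$\leq$'' is already in hand. The key tool is the explicit representation \eqref{explicituDu} from Theorem \ref{thm:JJ} together with a careful choice of test functions. First I would observe that, for fixed $h\in\{i,i'\}$ and fixed $j$, the signed measure $\alpha:=y^h\cdot(\mu_u)^i_j$ on $\Om\times\R^m$ has finite total variation (this is \eqref{totalvariation}), and its push-forward under the projection $\pi\colon\Om\times\R^m\to\Om$ is precisely a measure on $\Om$ whose action against $\ffi\in\Cc(\Om)$ is given by the right-hand side of \eqref{explicitnu}; call this measure $\beta$ on $\Om$. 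The inequality $|\beta|(\Om)\le|\alpha|(\Om\times\R^m)$ is automatic for push-forwards. What must be shown is the opposite bound, $|\alpha|(\Om\times\R^m)\le|\beta|(\Om)$, which upon unravelling the definition of $\beta$ is exactly the reverse of \eqref{totalvariation}.

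The heart of the matter is that the measure $(\mu_u)^i_j$ is ``vertically concentrated'' along the (multivalued) graph of $u$: by \eqref{explicituDu} it is carried, over $\Om\setminus S_u$, by the graph $\{(x,\overline u(x))\}$, and over $S_u$, by the segments $\{(x,\overline u^\theta(x)):\theta\in[0,1]\}$. Consequently $|y^h|$ restricted to $\supp(\mu_u)^i_j$ is determined, $|\mu_u|^i_j$-a.e., by a function of $x$ alone in the diffuse part and by an average in $\theta$ in the jump part. Concretely I would test \eqref{explicituDu} with $\phi(x,y)=\ffi(x)\,\mathrm{sgn}(y^h)\,\psi(y)\,|y^h|$ where $\psi\in\Cc(\R^m;[0,1])$ is chosen to approximate $\mathbf 1_{\supp(\mu_u)^i_j}$ and $\ffi$ ranges over $\Cc(\Om;[-1,1])$; the issue of $\mathrm{sgn}(y^h)$ not being continuous is handled by a standard mollification in the $y^h$-variable, noting that the set $\{y^h=0\}$ can be neglected because on that set $|y^h|=0$. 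Taking suprema over $\ffi$ and then over $\psi$, and using that on $\supp(\mu_u)^i_j$ one has $\overline u^\theta(x)$ in place of $y$, yields
\begin{align*}
\int_{\Om\times\R^m}|y^h|\ud|(\mu_u)^i_j|\le \int_{\Om\setminus S_u}|\overline u^h(x)|\ud|\mad_ju^i|+\int_{S_u}\int_0^1|(\overline u^\theta)^h(x)|\ud\theta\ud|\mad_ju^i|\,,
\end{align*}
which together with \eqref{totalvariation} gives \eqref{totalvariation_eq}. The integrability statement \eqref{integrability} is then immediate: the right-hand side of \eqref{totalvariation_eq} is finite by \eqref{property}, hence $\overline u^h\in L^1(\Om\setminus S_u,|\mad^D_ju^i|)$, and the jump-part finiteness $\int_{S_u}\int_0^1|(\overline u^\theta)^h|\ud\theta\ud|\mad^S_ju^i|<\infty$ forces, for $h\in\{i,i'\}$ and using that $\overline u^h=\frac12((u^+)^h+(u^-)^h)$ lies between the $\theta$-endpoints, $\overline u^h\in L^1(S_u,|\mad^S_ju^i|)$.

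I expect the main obstacle to be the rigorous justification of the ``vertical concentration'' step, i.e.\ passing from the identity \eqref{explicituDu} (valid for continuous $\phi$) to the computation of $|y^h|\cdot|(\mu_u)^i_j|$ as a measure on $\Om\times\R^m$ — one must show that the supremum defining the total variation is attained along sequences $\ffi\psi$ that effectively see only the graph/segments, which requires knowing $(\mu_u)^i_j$ is concentrated there (a consequence of \eqref{explicituDu} applied to $\phi$ supported away from the graph) and a disintegration-type argument to reduce the $y$-integration to the explicit $\theta$-average. The discontinuity of $\mathrm{sgn}$ is a minor technical point resolved by mollification, and the density/approximation arguments replacing $\Cc$ by $\mathbf 1$ are routine; the genuine content is the structure of the minimal lifting from Theorem \ref{thm:JJ}.
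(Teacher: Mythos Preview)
Your overall strategy---test the explicit representation \eqref{explicituDu} with functions of the form $\ffi(x)g(y)$ with $|g(y)|\le|y^h|$, use that $\int\phi\,\ud(\mu_u)^i_j\le\int|y^h|\ud|(\mu_u)^i_j|$, then take suprema in $\ffi$ and let $g\nearrow|y^h|$---is exactly the paper's approach. The paper implements the supremum over $\ffi$ concretely by approximating $S_u$ from inside by a compact set and from outside by an open neighbourhood, but this is just a hands-on way of computing the total variation of the push-forward measure; your ``standard density'' remark would cover it.

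However, there is a genuine error in your choice of test function. You take $\phi(x,y)=\ffi(x)\,\mathrm{sgn}(y^h)\,\psi(y)\,|y^h|$, but since $\mathrm{sgn}(y^h)|y^h|=y^h$ this collapses to $\ffi(x)\psi(y)\,y^h$. Plugging into \eqref{explicituDu}, the jump contribution becomes $\ffi(x)\int_0^1\psi(\overline u^\theta(x))\,(\overline u^\theta)^h(x)\,\ud\theta$ (with no absolute value inside the $\theta$-integral). After taking the supremum over $\ffi$ and letting $\psi\to 1$, the $S_u$ term is only $\int_{S_u}|\overline u^h|\,\ud|\mad^S_ju^i|$, not $\int_{S_u}\int_0^1|(\overline u^\theta)^h|\,\ud\theta\,\ud|\mad^S_ju^i|$. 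These differ precisely at points where $(u^+)^h$ and $(u^-)^h$ have opposite signs, and the former is strictly smaller. So your argument only yields the (trivial, push-forward) bound $|\beta|(\Om)\le|\alpha|(\Om\times\R^m)$, which is the wrong direction. This is also why your push-forward paragraph fails: ``$|\beta|(\Om)$ equals the right-hand side of \eqref{totalvariation}'' is false on the jump part. Finally, the displayed inequality at the end of your sketch has the sign reversed; what the argument is meant to produce is the \emph{lower} bound $\int|y^h|\,\ud|(\mu_u)^i_j|\ge\textrm{RHS}$.

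The fix is simple and is what the paper does: drop the $\mathrm{sgn}(y^h)$ and test with $\phi(x,y)=\ffi(x)\psi(y)|y^h|$. Then the explicit formula carries $|(\overline u^\theta)^h|$ through the $\theta$-integral, the supremum over $\ffi\in\Cc(\Om;[-1,1])$ converts $\ud(\mad_ju^i)$ into $\ud|\mad_ju^i|$ (using mutual singularity of the diffuse and jump parts), and letting $\psi\nearrow 1$ via Fatou gives the desired lower bound. Your derivation of \eqref{integrability} from \eqref{totalvariation_eq} is fine.
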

\begin{proof}
Fix $h,i,j$ and $\ep>0$, and  choose a compact set $S_u^\ep\subset S_u$ such that 
\begin{equation}\label{piccolo1}
|\mad_ju^i|(S_u\setminus S_u^\ep)=|\mad_j^Su^i|(S_u\setminus S_u^\ep)< \frac \ep 2\,.
\end{equation}
Thanks to the compactness of $S^\ep_u$, for all $\delta>0$ small enough, we can choose an open neighborhood $U_\delta^\ep$ of $S_u^\ep$ such that 
\begin{equation}\label{piccolo2}
|\mad^D_ju^i|(U_\delta^\ep)<\frac \ep 2\,,
\end{equation}
and $U_\delta^\ep\searrow S^\ep_u$ as $\delta\rightarrow0$\,.
{Notice that, by \eqref{piccolo1}, for all $\delta$ we have
\begin{equation}\label{piccolo3}
|\mad_j^Su^i|(S_u\setminus U_\delta^\ep)\le |\mad_j^Su^i|(S_u\setminus S^\ep_u)<\frac\ep 2\,,
\end{equation} 
whereas, by \eqref{piccolo2}, we get
\begin{equation}\label{piccolo4}
|\mad_ju^i|(U^\ep_\delta\setminus S_u)=|\mad_j^Du^i|(U^\ep_\delta\setminus S_u)<\frac \ep 2\,.
\end{equation}
}
Let  $\ffi_{{U^\ep_\delta}}\in \Cc(U_\delta^\ep;[-1,1])$ and $\ffi_{{U^\ep_\delta}}^c\in \Cc(\Om\setminus  {{\overline U^\ep_\delta}};[-1,1])$ and set $\ffi:=\ffi_{{U^\ep_\delta}}+\ffi_{{U^\ep_\delta}}^c$\,. Moreover, let $\psi\in\Cc(\R^m;[-1,1])$\,. 
By the equality in \eqref{consequence_P}, using that $\sup\{|y|:y\in \supp\psi\}=:\widehat C(\psi)<+\infty$, we obtain
 \begin{align}\label{laprima}
 	&\int_{\Om\times\R^m}\ffi(x)\psi(y)|y^h|\ud(\mu_u)^i_j\\ \nonumber
 	=&\int_{U_\delta^\ep\setminus S_u}\ffi_{{U^\ep_\delta}}(x)\psi(\overline u(x))|\overline u^h(x)|\ud(\mad_ju^i)+\int_{\Om\setminus (U_\delta^\ep\cup S_u)}\ffi_{{U^\ep_\delta}}^c(x)\psi(\overline u(x))|\overline u^h(x)|\ud(\mad_ju^i)\\ \nonumber
 	&+\int_{ {{U^\ep_\delta}}\cap S_u}\ffi_{{U^\ep_\delta}}(x)\int_{0}^{1}\psi(\overline{u}^\theta(x))| (\overline{u}^\theta)^h(x)|\ud\theta\ud(\mad_ju^i)\\ \nonumber
 	&+\int_{ (\Om\setminus {{U^\ep_\delta}})\cap S_u}\ffi_{{U^\ep_\delta}}^c(x)\int_{0}^{1}\psi(\overline{u}^\theta(x))| (\overline{u}^\theta)^h(x)|\ud\theta \ud(\mad_ju^i)\\ \nonumber
 	\geq& -|\mad_ju^i|({{U^\ep_\delta}}\setminus S_u)\|\,{\widehat C}(\psi)+\int_{\Om\setminus ({{U^\ep_\delta}}\cup S_u)}\ffi_{{U^\ep_\delta}}^c(x)\psi(\overline u(x))|\overline u^h(x)|\ud(\mad_ju^i)\\ \nonumber
 	&-|\mad^S_ju^i|(S_u\setminus S_u^\ep)\|\,{\widehat C}(\psi)+\int_{ {{U^\ep_\delta}}\cap S_u^\ep}\ffi_{{U^\ep_\delta}}(x)\int_{0}^{1}\psi(\overline{u}^\theta(x))| (\overline{u}^\theta)^h(x)|\ud\theta\ud(\mad _ju^i)\\ \nonumber
 	&-|\mad^S_ju^i|(S_u\cap (\Om\setminus {{U^\ep_\delta}}))\|\,{\widehat C}(\psi)\,.
 \end{align}
Therefore,  taking in the right-hand side of \eqref{laprima} the supremum over $\ffi_{{U^\ep_\delta}}\in  \Cc(U_\delta^\ep;[-1,1])$\,, $\ffi_{{U^\ep_\delta}}^c\in \Cc(\Om\setminus  {{\overline U^\ep_\delta}};[-1,1])$\,, and assuming $\psi\geq0$, by \eqref{piccolo1}-\eqref{piccolo4}, we deduce
\begin{equation}\label{20220428_1}
\begin{aligned}
	\int_{\Om\times\R^m}|y^h|\ud{|(\mu_u)^i_j|}
	\geq &-2\ep\,{\widehat C}(\psi)+\int_{\Om\setminus (U_\delta^\ep\cup S_u)}|\psi(\overline u(x))\overline u^h(x)|\ud|\mad_ju^i| \\
	&+\int_{ U_\delta^\ep\cap S_u^\ep}\int_{0}^{1}|\psi(\overline{u}^\theta(x)) (\overline{u}^\theta)^h(x)|\ud\theta\ud|\mad_ju^i|\,,
\end{aligned}
\end{equation}
for all $\psi\in\Cc(\R^m;[0,1])$\,.
Letting $\delta\rightarrow0$ in \eqref{20220428_1}, for every $\psi\in\Cc(\R^m;[0,1])$ we get
\begin{align*}
	\int_{\Om\times\R^m}|y^h|\ud{|(\mu_u)^i_j|}\geq& -2\ep\,{\widehat C}(\psi)+\int_{\Om\setminus S_u}|\psi(\overline u(x))\overline u^h(x)|\ud|\mad_ju^i|   \nonumber\\
	&\;\;\;+\int_{ S_u^\ep}\int_{0}^{1}|\psi(\overline{u}^\theta(x)) (\overline{u}^\theta)^h(x)|\ud\theta \ud|\mad_ju^i|
\end{align*}
and, by the arbitrariness of $\ep>0$\,, we conclude
\begin{equation}\label{20220428_2}
\begin{aligned}
	\int_{\Om\times\R^m}|y^h|\ud{|(\mu_u)^i_j|}\geq& \int_{\Om\setminus S_u}|\psi(\overline u(x))\overline u^h(x)|\ud|\mad_ju^i|\\
	&+\int_{ S_u}\int_{0}^{1}|\psi(\overline{u}^\theta(x)) (\overline{u}^\theta)^h(x)|\ud\theta\ud|\mad_ju^i|\,,
\end{aligned}
\end{equation}
for every $\psi\in\Cc(\R^m;[0,1])$\,.
Let us choose a non-increasing compactly supported continuous function $\Psi:[0,\infty)\rightarrow[0,1]$ such that $\Psi(0)=1$, and, for every $M>0$ we set $\psi_M(\cdot)=\Psi(\frac{\cdot}{M})$\,. 
Taking $\psi(y):=\psi_M(|y^h|)$ in \eqref{20220428_2}, and letting $M\to+\infty$, by Fatou Lemma we get
\begin{align*}
	\int_{\Om\times\R^m}|y^h|\ud{|(\mu_u)^i_j|}\geq& \int_{\Om\setminus S_u}|\overline u^h(x)|\ud|\mad_ju^i|+\int_{ S_u}\int_{0}^{1}| (\overline{u}^\theta)^h(x)|\ud\theta \ud|\mad_ju^i|\,,
\end{align*}
 which, together with \eqref{totalvariation}, proves \eqref{totalvariation_eq}.
\end{proof}

We will make use of property \eqref{integrability} in order to prove the following theorem.
%
\begin{theorem}\label{det_teo2}
Let $u\in BV(\Om;\R^m)$, let $I=\{i,i'\}\subset\{1,\dots,m\}$, and $J=\{j,j'\}\subset \{1,\dots,n\}$ {with $i\neq i'$ and $j\neq j'$}\,. Assume that $u$ satisfies hypothesis \eqref{property} for $i,i'$, and $j,j'$. Then for all $i,h\in I$, $j\in J$ there exists a unique measure $(\nu_u)^{i,h}_j$ enjoying \eqref{explicitnu} and having finite total variation. As a consequence, the current $(T_u)_{I,J}\in \mathcal D_1(\Om)$ given by \eqref{def_Tu} is well-defined,  has finite mass, and 
\begin{align*}
	\|\partial (T_u)_{I,J}\|_{\flt,\Om}\leq |(\lambda_u)^{I}_j|(\Om)+|(\lambda_u)^{I}_{j'}|(\Om)\,,
\end{align*}
{where the measures $(\lambda_u)^{I}_j$ and $(\lambda_u)^{I}_{j'}$ are defined in \eqref{def_lambda}.}
Moreover, if $\{u_k\}_{k\in\N}\subset BV(\Om;\R^m)$ is a sequence of maps satisfying \eqref{property} and converging strictly to $u$ in $BV(\Omega;\R^m)$
with
\begin{equation}\label{oranumerata}
\sup_{k\in\N}\Big\{\sum_{h=j,j'}\big(|y^{i}(\mu_{u_k})^{i'}_h|(\Om\times\R^m)+|y^{i'}(\mu_{u_k})^{i}_h|(\Om\times\R^m)\big)\Big\}<C\,,
\end{equation}
for some $C>0$\,,
then, as $k\to +\infty$\,, $(\nu_{u_k})^{i,h}_j\weakstar(\nu_u)^{i,h}_j$ for all $i,h\in I$, $j\in J$, and $(T_{u_k})_{I,J}$ converges to $(T_u)_{I,J}$ in $\mathcal D_1(\Om)$.	 
\end{theorem}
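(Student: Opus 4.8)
The plan is to first establish the existence, uniqueness, and finite total variation of the measures $(\nu_u)^{i,h}_j$ for $i,h \in I$, $j \in J$. For this I would use the lemma just proved: property \eqref{property} together with \eqref{totalvariation_eq} shows that $y^h \cdot (\mu_u)^i_j$ is a finite Radon measure on $\Om\times\R^m$, and formula \eqref{explicitnu} (which rigorously uses the integrability \eqref{integrability}) identifies its marginal on $\Om$ with $(\nu_u)^{i,h}_j$; uniqueness follows since \eqref{explicitnu} prescribes the action against every $\ffi\in\Cc(\Om)$. With the four measures $(\nu_u)^{i,h}_j$ in hand (for the relevant index pairs), the measures $(\lambda_u)^I_j,(\lambda_u)^I_{j'}$ of \eqref{def_lambda} are well-defined finite Radon measures, hence $(T_u)_{I,J}$ from \eqref{def_Tu} is a well-defined $1$-current of finite mass, with mass bounded by $|(\lambda_u)^I_j|(\Om)+|(\lambda_u)^I_{j'}|(\Om)$.

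The flat-norm estimate for $\partial(T_u)_{I,J}$ I would obtain exactly as in the proof of Theorem \ref{det_teo}: testing $\partial(T_u)_{I,J}$ against a $0$-form $\omega=\varphi\in\Cc^\infty(\Om)$ gives, by definition of the boundary and by \eqref{def_Tu2},
\begin{align*}
\partial(T_u)_{I,J}(\varphi)=(T_u)_{I,J}(\mathrm d\varphi)=\sigma^I_J\Big(\int_\Om\frac{\partial\varphi}{\partial x_{j'}}\ud(\lambda_u)^I_j-\int_\Om\frac{\partial\varphi}{\partial x_j}\ud(\lambda_u)^I_{j'}\Big)\,,
\end{align*}
and since $\|\mathrm d\varphi\|_{L^\infty}\le\|\varphi\|_{C^{0,1}(\Om)}$ and $|\sigma^I_J|=1$, taking the supremum over $\varphi$ with $\|\varphi\|_{C^{0,1}(\Om)}\le1$ yields $\|\partial(T_u)_{I,J}\|_{\flt,\Om}\le|(\lambda_u)^I_j|(\Om)+|(\lambda_u)^I_{j'}|(\Om)$.

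For the convergence statement, assume $u_k\strictly u$ with the uniform bound \eqref{oranumerata}. The key tool is Theorem \ref{thm:JJ}: strict convergence gives $\mu_{u_k}\to\mu_u$ tightly, in particular $(\mu_{u_k})^i_h\weakstar(\mu_u)^i_h$ (and tightly) for the relevant indices. To pass to the measures $(\nu_{u_k})^{i,h}_j$, I would fix $\ffi\in\Cc(\Om)$ and write $\int_\Om\ffi\ud(\nu_{u_k})^{i,h}_j=\int_{\Om\times\R^m}\ffi(x)y^h\ud(\mu_{u_k})^i_j$; the integrand is the unbounded function $(x,y)\mapsto\ffi(x)y^h$, so weak-$*$ convergence alone is not enough — this is the main obstacle. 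I would overcome it by a truncation argument: introduce $\psi_M$ as in the lemma ($\psi_M(y)=\Psi(|y^h|/M)$), split $y^h=\psi_M(y)y^h+(1-\psi_M(y))y^h$; the truncated part converges because $\ffi(x)\psi_M(y)y^h\in\Cc(\Om\times\R^m)$ and $(\mu_{u_k})^i_h$ converges tightly, while the tail part is controlled uniformly in $k$ by
\begin{align*}
\Big|\int_{\Om\times\R^m}\ffi(x)(1-\psi_M(y))y^h\ud(\mu_{u_k})^i_j\Big|\le\|\ffi\|_{L^\infty}\int_{\{|y^h|\ge M\}}|y^h|\ud|(\mu_{u_k})^i_j|\,,
\end{align*}
which tends to $0$ as $M\to\infty$ uniformly in $k$ by \eqref{oranumerata} (via Chebyshev/dominated convergence on the equi-integrable family $y^h\cdot(\mu_{u_k})^i_j$ of uniformly bounded total variation). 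This gives $(\nu_{u_k})^{i,h}_j\weakstar(\nu_u)^{i,h}_j$ for all $i,h\in I$, $j\in J$; consequently $(\lambda_{u_k})^I_j\weakstar(\lambda_u)^I_j$ and likewise for $j'$, and finally $(T_{u_k})_{I,J}(\omega)\to(T_u)_{I,J}(\omega)$ for every $\omega\in\mathcal D^1(\Om)$ directly from \eqref{def_Tu2}, i.e.\ $(T_{u_k})_{I,J}\to(T_u)_{I,J}$ in $\mathcal D_1(\Om)$. The delicate point throughout is the uniform tail control, which is precisely why hypothesis \eqref{oranumerata} is imposed.
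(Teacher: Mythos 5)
Your construction of $(\nu_u)^{i,h}_j$ as the marginal on $\Om$ of the finite measure $y^h(\mu_u)^i_j$ is a legitimately more direct route than the paper's: the paper instead truncates $u$ at level $N$, works with the bounded maps $u_N$ (for which Corollary \ref{coro} applies), and passes to the limit $N\to\infty$ using the lemma and dominated convergence. Your shortcut works provided you make explicit that the representation \eqref{explicituDu} extends to the unbounded, non-compactly supported test function $\phi(x,y)=\ffi(x)y^h$ — a truncation-in-$y$/dominated convergence step that the identity \eqref{totalvariation_eq} is precisely designed to permit; this is what the paper's $N$-truncation accomplishes. The flat-norm estimate is the same computation as in Theorem \ref{det_teo} and is fine.

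The convergence part contains a genuine gap. You claim that
\begin{equation*}
\sup_{k}\int_{\{|y^h|\ge M\}}|y^h|\ud|(\mu_{u_k})^i_j|\longrightarrow 0\qquad\text{as }M\to+\infty
\end{equation*}
follows from \eqref{oranumerata}. It does not: \eqref{oranumerata} is a uniform bound on the total variations $|y^h(\mu_{u_k})^i_j|(\Om\times\R^m)$, i.e.\ equi-boundedness of first moments, not equi-integrability. Chebyshev only gives $|(\mu_{u_k})^i_j|(\{|y^h|\ge M\})\le C/M$, which controls the measure of the tail set, not the integral of $|y^h|$ over it; a family placing $|(\mu_{u_k})^i_j|$-mass $1/M_k$ at $|y^h|=M_k$ with $M_k\to\infty$ has uniformly bounded first moments but tails that do not vanish uniformly in $k$. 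The paper takes a different route that avoids this claim: by \eqref{oranumerata} the sequence $y^{i'}(\mu_{u_k})^i_j$ is bounded in $\mathcal M_b(\Om\times\R^m)$, hence weakly-* compact; the limit of any subsequence is identified with $y^{i'}(\mu_u)^i_j$ by testing against compactly supported functions $\phi(x,y)\,y^{i'}$ with $\phi\in\Cc(\Om\times\R^m)$ (for which the tight convergence $\mu_{u_k}\to\mu_u$ from Theorem \ref{thm:JJ} suffices), and the whole sequence converges by the Urysohn property; only then does one pass to the marginals on $\Om$. You correctly located the delicate point (testing the weighted measures against $\ffi(x)$, which does not decay in $y$), but the justification you give for the uniform tail control is the one step that would fail as written; you should replace it with the compactness-plus-identification argument above or supply an actual equi-integrability proof.
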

\begin{proof}
{Fix $i\in I$, $h=i'$, and $j\in J$. We start by showing that the measures $(\nu_u)^{i,h}_j$ and $(\nu_u)^{i,h}_{j'}$ provided by \eqref{explicitnu} are well-defined and have finite total variation in $\Om\times\R^m$\,.}	
	
	\medskip
	
	\textbf{Step 1:}
	For every $N>0$ let $\eta_N:\R\rightarrow[-N,N]$ be the function defined by $\eta_N(t):=(-N)\vee t\wedge N$\,.
	 We introduce the standard truncation of $u$ at level $N$ as {$u_N:=(\eta_N(u^{h}))_{h=1,\ldots,m}$\,.} 
	 For all $N>0$\,, since $u^{h}_N\in L^\infty(\Om)$ and in view of \eqref{explicitnu}, the measure $(\nu_j^{i,i'})_N$ defined by
	 $$
	 (\nu_j^{i,i'})_N:=(\nu_{u_N})_j^{i,i'}=[u_N^{i'}\mad_ju_N^i]\,,
	 $$
	is well-defined as a Radon measure in $\mathcal M_b(\Om)$\,.
	By \eqref{def_nu} and \eqref{explicitnu}, {for all $\ffi\in\Cc(\Omega)$\,,} we can write 
	\begin{equation}\label{nu_troncata}
	\begin{aligned}
		\int_\Om\ffi(x)\ud(\nu_j^{i,i'})_N=&\int_{\Om\setminus S_u}\ffi(x)\overline u_N^{i'}(x)\ud (\mad_ju_N^i)+\int_{S_u}\varphi(x) \overline u_N^{i'}(x) \ud(\mad_ju_N^i)
	\end{aligned}
	\end{equation}
	By \cite[Theorem 7, pag. 486]{GMS1} $\eta_N$ is differentiable at $u^i(x)$ for $\mad_ju^i$-a.e. $x\in \Om\setminus S_{u}$, and  
	\begin{equation*}
		\mad_ju_N^i=\left\{\begin{array}{ll}
		\displaystyle \mad_j\eta_N(u^i)\mad_ju^i & \text{ in }\Om\setminus S_{u}\,,\\[1mm]
		\displaystyle \big((u^i_N)^+-(u^i_N)^-\big)\nu^j\cdot\mathcal H^{n-1}&\text{ in }S_{u}\,,
		\end{array}
		\right.
	\end{equation*}
where
	\begin{equation*}
	\mad_j\eta_N(u^i)=\left\{
	\begin{array}{ll}
	\displaystyle 1&\textrm{ in }\Om_N:=\{x\in \Om\setminus S_{u}:|u^i(x)|<N\}\\[1mm]
	 \displaystyle 0&\textrm{ in }R_N:=\{x\in \Om\setminus S_{u}:|u^i(x)|\geq N\}\,,
	\end{array}
	\right.
	\end{equation*}
	with 
	\begin{equation}\label{convinsi}
	\Om_N\nearrow(\Om\setminus S_{u})\qquad\textrm{ as $N\rightarrow\infty$ up to a }|\mad_ju^i|\textrm{-negligible set.}
	\end{equation}
	 Hence, we can write 
	\begin{align*}
		\mad_ju_N^i=\begin{cases}
			\mad_ju^i &\text{ on }\Om_N\\
			0&\text{ on }R_N.
		\end{cases}
	\end{align*}
	\medskip
	
	\textbf{Step 2:}
	We claim that, as $N\to+\infty$\,, $(\nu_j^{i,i'})_N\weakstar\nu_j^{i,i'}$ where $\nu_j^{i,i'}$ is uniquely determined by the formula
	\begin{equation}\label{claim_step2}
	\begin{aligned}
		\int_\Om\ffi(x)\ud\nu_j^{i,i'}=&\int_{\Om\times\R^m}\ffi(x)y^{i'}\ud(\mu_u)^i_j\\
		=&\int_{\Om\setminus S_u}\ffi(x)\overline u^{i'}(x)\ud (\mad_ju^i)+\int_{S_u}\ffi(x) \overline u^{i'}(x) \ud(\mad_ju^i)\,,
	\end{aligned}
	\end{equation}
	for all $\ffi\in \Cc(\Om)$\,. 
	To prove this, we start by observing that for every $N>0$\,, \eqref{nu_troncata} implies 
	\begin{equation}\label{122}
	\begin{aligned}
		|(\nu_j^{i,i'})_N|(\Om)\leq& \int_{\Om\setminus S_{u}}|\overline u_N^{i'}|\ud |\mad_ju_N^i|+\int_{S_{u}}\frac12| (u_N^{i'})^++(u_N^{i'})^-| \ud |\mad_ju_N^i|\\
		\leq&  \int_{\Om\setminus S_{u}}|\overline u^{i'}|\ud|\mad_ju^i|+\int_{S_{u}}\frac12| (u^{i'})^++(u^{i'})^-| \ud|\mad_ju^i|<\infty\,,
	\end{aligned}
	\end{equation}
	where the last inequality is a consequence of property \eqref{property} (see \eqref{integrability}) and the last but one inequality follows from 
	\begin{equation}\label{siusaanchedopo}
	|\overline u_N^h|\leq |\overline u^h|\,,\qquad |(u_N^{i'})^++(u_N^{i'})^-|\leq |(u^{i'})^++(u^{i'})^-|\qquad\qquad\textrm{for all }N>0\,, h\in\{i,i'\}\,.
	\end{equation}
	 By \eqref{122}, up to subsequences, 
	 \begin{equation}\label{convemisure}
	 (\nu_j^{i,{i'}})_N\weakstar\nu_j^{i,{i'}}\,, \qquad\qquad\textrm{as measures as $N\to+\infty$\,,}
	 \end{equation}
	  for some $\nu_j^{i,i'}\in \mathcal M_b(\Om)$\,.
	  Let $\ffi\in\Cc(\Omega)$ be fixed; by \eqref{nu_troncata}, for every $N>0$ it holds
	 \begin{equation}\label{nu_troncata2}
	\begin{aligned}
		\int_\Om\ffi(x)\ud(\nu_j^{i,i'})_N=&\int_{\Om_N}\ffi(x)\overline u_N^{i'}(x)\ud (\mad_ju^i)\\
		&+\int_{S_{u}}\ffi(x)\frac12((u_N^{i'})^++(u_N^{i'})^-)((u_N^i)^+-(u_N^i)^-) \nu^j\ud \mathcal H^{n-1}\,;
	\end{aligned}
	\end{equation}
	therefore, using once again \eqref{integrability},\eqref{siusaanchedopo}, \eqref{convinsi} and the fact that
	$$
	|(u_N^i)^+-(u_N^i)^-|\leq |(u^i)^+-(u^i)^-|\qquad\qquad\textrm{$\mathcal H^1$-a.e. on $S_u$, for every }N>0\,,
	$$
	 by the Dominated Convergence Theorem, we conclude that the right-hand side of \eqref{nu_troncata2} tends to (as $N\to +\infty$)
	\begin{align*}
		\int_{\Om\setminus S_{u}}\ffi(x)\overline u^{i'}(x)\ud(\mad_ju^i)+\int_{S_{u}}\ffi(x)\frac12( (u^{i'})^++(u^{i'})^-) 	\ud(\mad_ju^i)\,.
	\end{align*}
	This fact, together with \eqref{convemisure} and \eqref{nu_troncata2}, implies that
	\begin{equation*}
	\int_\Om\ffi(x)\ud\nu_j^{i,i'}
		=\int_{\Om\setminus S_u}\psi(x)\overline u^{i'}(x)\ud (\mad_ju^i)+\int_{S_u}\ffi(x) \overline u^{i'}(x) \ud(\mad_ju^i)\,,
	\end{equation*}
for all $\ffi\in\Cc(\Omega)$\,.	
	To conclude the claim it remains to show that also the first equality in \eqref{claim_step2} is satisfied.
	 By \eqref{def_nu}, for every $N>0$\,, we have
	\begin{align}\label{def_nuN}
		\int_\Om\ffi(x)\ud(\nu^{i,i'}_j)_N=\int_{\Om\times\R^m}\ffi(x)y^{i'}\ud(\mu_{u_N})^i_j\qquad \forall \ffi\in \Cc(\Om)\,,
	\end{align}
	and, by Theorem \ref{thm:JJ}\,, $(\mu_{u_N})^i_j\weakstar(\mu_{u})^i_j$ in $\mathcal M_b(\Om\times\R^m)$\,. 
{It follows that $y^{i'}(\mu_{u_N})^i_j\rightharpoonup y^{i'}(\mu_{u})^i_j$ as distributions in $\mathcal D(\Om\times \R^m)$\,.}
	Moreover, by \eqref{122} and \eqref{def_nuN}, we deduce that
	the sequence $\{|y^{i'}(\mu_{u_N})^i_j|(\Om)\}_N$ is uniformly bounded, and hence, up to subsequences, $y^{i'}(\mu_{u_N})^i_j\weakstar\tau$ for some $\tau\in\mathcal M_b(\Om\times\R^m)$\,. Therefore, $\tau=y^{i'}(\mu_{u})^i_j$\,, the whole sequence $\{y^{i'}(\mu_{u_N})^i_j\}_N$ converges to $y^{i'}(\mu_{u})^i_j$ (by the Urysohn property)\,, and	
	\begin{align*}
		\int_\Om\ffi(x)\ud\nu^{i,i'}_j=\int_{\Om\times\R^m}\ffi(x)y^{i'}\ud(\mu_{u})^i_j\qquad \forall \ffi\in \Cc(\Om)\,,
	\end{align*}
	thus concluding the proof of  \eqref{claim_step2} .
	%
	\medskip
	
	\textbf{Step 3:} It remains to show the last claim in the statement.
	{By \eqref{oranumerata}}, up to  a subsequence, $y^{i'}(\mu_{u_{k}})^i_j\weakstar\tau$ for some $\tau\in\mathcal M_b(\Om\times\R^m)$\,. Arguing as in Step 2, we get that $\tau=y^{i'}(\mu_{u})^i_j$ and that the whole sequence $y^{i'}(\mu_{u_{k}})^i_j$ converges to $\tau$\,.
	As a consequence, we deduce that $(\nu_{u_k})^{i,i'}_j\weakstar(\nu_{u})^{i,i'}_j$, explicitly given by 
	\begin{align*}
		\int_{\Om\times\R^m}\ffi(x)y^{i'}\ud(\mu_{u})^i_j&=\int_{\Om\setminus S_{u}}\ffi(x)\overline u^{i'}(x)\ud(\mad_ju^i)+\int_{S_{u}}\psi(x)\overline u^{i'}(x)\ud(\mad_ju^i)=\int_\Om \ffi(x)\ud(\nu_u)_j^{i,i'},	
	\end{align*}
	for all $\ffi\in \Cc(\Om)$\,. 
	By the very definitions of $\lambda_j^I$ and $\lambda_{j'}^I$ in \eqref{def_lambda} and of $T_u$ in \eqref{def_Tu}, the claim easily follows.	
\end{proof}
We can then give the following definition, which extends Definition \ref{def_2x2minor}.

\begin{definition}
Let  $I= \{i,i'\}\subset\{1,\dots,n\}$, $J= \{j,j'\}\subset\{1,\dots,m\}$ with $i\neq i'$ and $j\neq j'$, and let $u\in BV(\Om;\R^m)$ be such that \eqref{property} holds for $I$ and $J$. Then we define $M^J_I(Du)\in \mathcal D'(\Om)$ the $2\times2$-minor of $Du$ with rows in $I$ and columns in $J$ as the distribution in \eqref{def_minor}.
\end{definition}
\begin{remark}\label{remDD}
\rm{
	As a byproduct of the proof of Theorem \ref{det_teo2}, we deduce that, if $u\in BV(\Om;\R^m)$ satisfies hypothesis \eqref{property}, then the measures $y^i(\mu_u)^{i'}_j$ and $(\nu_u)_j^{i',i}$ take the form
	\begin{align*}
		\int_{\Om\times\R^m}\phi(x,y)y^i\ud(\mu_{u})^{i'}_j=&\int_{\Om\setminus S_{u}}\phi(x,\overline u(x))\overline u^i(x)\ud(\mad_ju^{i'})\nonumber\\
		&+\int_{S_{u}}\int_{0}^{1}\phi(x,\overline{u}^\theta)(\overline{u}^\theta)^i(x) \ud\theta\ud(\mad_ju^{i'}),
	\end{align*}
	for all $\phi\in \Cc(\Om\times\R^m)$, and 
	\begin{align*}
		\int_{\Om\times\R^m}\varphi(x)y^{i}\ud(\mu_{u})^{i'}_j&=\int_{\Om\setminus S_{u}}\varphi(x)\overline u^{i}(x)\ud(\mad_ju^{i'})+\int_{S_{u}}\varphi(x)\overline u^{i}(x)\ud(\mad_ju^{i'}),
	\end{align*}
	for all $\varphi\in \Cc(\Om)$.
	If $u\in L^p(\Om;\R^m)\cap W^{1,q}(\Om;\R^m)$, with $\frac1p+\frac1q=1$ (including the case $p=\infty$, $q=1$), property \eqref{property} is readily satisfied, and then
	the previous formulas read
	\begin{align*}
		\int_{\Om\times\R^m}\phi(x,y)y^i\ud(\mu_{u})^{i'}_j=&\int_{\Om}\phi(x,u(x)) u^i(x)\frac{\partial u^{i'}}{\partial x_j}(x)\ud x,
	\end{align*}
	for all $\phi\in \Cc(\Om\times\R^m)$, and
	\begin{align*}
		\int_\Om \varphi(x)\ud(\nu_u)_j^{i',i}	=\int_{\Om\times\R^m}\varphi(x)y^{i}\ud(\mu_{u})^{i'}_j=\int_{\Om}\varphi(x)u^{i}(x)\frac{\partial u^{i'}}{\partial x_j}(x)\ud x,
	\end{align*}
	for all $\varphi\in \Cc(\Om)$\,.
	In particular, the measure $(\nu_u)_j^{i',i}$ belongs to $L^1(\Om)$ and coincides almost everywhere with the function $u^{i}\frac{\partial u^{i'}}{\partial x_j}$\,.
	}
\end{remark}

\subsection{The $2\times2$ dimensional case}\label{2drmk}

Let us now restrict ourselves to the two dimensional case $n=2$, namely $\Om\subset\R^2$ and let also $m=2$. In this case the only possible choice for $I$ and $J$ is $I=J=\{1,2\}$, and the current $(T_u)_{I,J}$ will be simply denoted by $T_u$; furthermore, we denote the distribution $M^J_I(Du)$ by $Ju$. We also denote  $(\lambda_u)^{1,2}_1=:\lambda_1$, and $(\lambda_u)^{1,2}_2=:\lambda_2$. Explicitly,
\begin{align}\label{def_Tu_2dim}
	(T_u)(\omega)&=\int_\Om\omega_{2}\ud\lambda_1-\int_\Om \omega_1\ud\lambda_{2}\nonumber\\
	&=\frac{1}{2}\int_\Om \omega_{2}\ud([u^1\mad_1u^{2}]-[u^{2}\mad_1u^{1}])-\frac{1}{2}\int_\Om\omega_{1}\ud([u^1\mad_{2}u^{2}]-[u^{2}\mad_{2}u^{1}])\,,
\end{align}
for all $\omega=\omega_1\ud x^1+\omega_2\ud x^2\in \mathcal D^1(\Om)$, and for all $\varphi\in \mathcal D^0(\Om)$,
\begin{align}\label{def_dTu_2dim}
	\partial(T_u)(\varphi)&=\langle Ju,\varphi\rangle_\Om\nonumber\\
	&=\frac{1}{2}\int_\Om \frac{\partial\varphi}{\partial x_{2}}\ud([u^1\mad_1u^{2}]-[u^{2}\mad_1u^{1}])-\frac{1}{2}\int_\Om\frac{\partial\varphi}{\partial x_{1}}\ud([u^1\mad_{2}u^{2}]-[u^{2}\mad_{2}u^{1}])\,.
\end{align}
Assume now that $u\in L^p(\Om;\R^m)\cap W^{1,q}(\Om;\R^m)$, with $\frac1p+\frac1q=1$; in view of Remark \ref{remDD}, the last expression reads
\begin{align}
	\langle Ju,\varphi\rangle_\Om=\frac{1}{2}\int_\Om \frac{\partial\varphi}{\partial x_{2}}\Big(u^1\frac{\partial u^2}{\partial x_1}-u^2\frac{\partial u^1}{\partial x_1}\Big)\ud x-\frac{1}{2}\int_\Om\frac{\partial\varphi}{\partial x_{1}}\Big(u^1\frac{\partial u^2}{\partial x_2}-u^2\frac{\partial u^1}{\partial x_2}\Big)\ud x,
\end{align}
which coincides with the definition of the distributional determinant of $\nabla u$. In particular, the distribution $Ju$ extends the definition of distributional Jacobian.

\begin{remark}\label{mucci_rmk}
{\rm In \cite{M} the existence and well-posedness of $Ju=\partial (T_u)$ is obtained under the additional condition that the function $u$ can be approximated (strictly in $BV$) by a sequence of maps $v_k\in C^1(\Om;\R^2)$ satisfying the following condition: There exists a constant $C>0$, independent of $k$, such that 
$$\int_\Om|Jv_k|\ud x\leq C\qquad \forall k\in \mathbb N.$$  
This is equivalent to require that the Jacobian total variation functional relaxed w.r.t. the strict topology of $BV$ is finite. In turn, this is equivalent to require that the relaxed area functional of $v_k$ is finite, and then that the mass of the Cartesian currents $\mathcal G_{v_k}$ with underlying maps $v_k$ have equibounded masses.

Under this assumption, it turns out that also $Ju=\partial(T_u)$ is a Radon measure with finite total variation. In particular it turns out that, whenever $u_k\in  BV(\Om;\R^2)\cap L^\infty(\Om;\R^2)$ are such that 
\begin{align}\label{cond_flat}
&u_k\rightarrow u\qquad \text{strictly in }BV(\Om;\mathbb R^2),\nonumber\\
&|Ju_k|(\Om)\leq C\qquad \forall k\in \mathbb N,
\end{align}
for some constant $C>0$ independent of $k$, then
\begin{align}\label{conv_meas}
Ju_k\rightarrow Ju\qquad\text{ weakly star as measures.}
\end{align}}
\end{remark}
We will make use of \eqref{conv_meas} in the proof of the upper bound of Theorem \ref{mainthm} using the following observation.
\begin{remark}\label{flat_rmk}
{\rm Assume that $u_k\in BV(\Om;\R^2)\cap L^\infty(\Om;\R^2)$ satisfy the two conditions in \eqref{cond_flat} and in addition $\textrm{supp}(Ju_k)\subset K\subset\Om$ for some compact set $K$, then \eqref{conv_meas} ensures that 
\begin{align}
Ju_k\rightarrow Ju\qquad\text{ in the flat topology.}
\end{align}
}

\end{remark}

\section{A new approach to topological singularities}\label{sec:model}
Let $\Omega$ be an open bounded subset of $\R^2$ with Lipschitz continuous boundary and let $\Omega'\subset\subset\Omega$ be an open set. We introduce
\begin{equation}\label{admissiblebdrysenzaw}
	\mathcal {AD}(\Om,\Omega'):=\{u\in SBV^2(\Om;\Ss^1)\,:\,\overline{S}_u\subset\overline{\Omega'}\},
\end{equation}
where 
$S_u$  denotes the jump set of $u$\,. 
For every $\ep>0$\,, let $\F_\ep:L^2(\Omega;\R^2)\to [0,\infty]$ be the functional defined by
\begin{equation}\label{defEne}
	\F_\ep(u):=\left\{\begin{array}{ll}
		\displaystyle \int_{\Omega}\frac 1 2 |\nabla u|^2\ud x+\frac 1 \ep\Huno(\overline{S}_u)&\textrm{if }u\in \mathcal{AD}(\Omega,\Omega')\\
		+\infty&\textrm{elsewhere in }L^2(\Omega;\R^2)\,.
	\end{array}
	\right.
\end{equation}
{In what follows, we will adopt also localized versions of the functional $\F_\ep$\,; more precisely, for any $u\in \mathcal{AD}(\Omega,\Omega')$ and for any open set $\Omega'\subset\subset A\subset\subset\Omega$\,, we will denote by $\F_\ep(u;A)$ the functional in \eqref{defEne} with $\Omega$ replaced by $A$\,.
}

Denoting by $\M_b(\Omega)$ the class of Radon measures with finite total variation in $\Omega$\,, we set
$$
X(\Omega):=\Big\{\mu=\sum_{i=1}^Iz^i\delta_{x^i}\in\M_b(\Omega)\,:\,I\in\N\,,\,z^i\in\Z\setminus\{0\}\,,\,x^i\in\Omega\Big\}\,.
$$
For every $u\in \mathcal{AD}(\Om,\Omega')$ we consider the current $T_u\in \mathcal D_1(\Om)$ introduced in \eqref{def_Tu} and in \eqref{def_Tu_2dim} for the $2$-dimensional case, and we denote by $Ju:=\partial T_u$ its boundary (whose expression is given in \eqref{def_dTu_2dim}), namely the Jacobian determinant of $u$\,. We recall that in general $Ju$ is not a measure, but a mere distribution with finite flat norm (see Theorem \ref{det_teo}).

Our main result is the following.
\begin{theorem}\label{mainthm}
	The following $\Gamma$-convergence result holds true.
	\begin{itemize}
		\item[(i)] (Compactness) Let $\{u_\ep\}_{\ep}\subset L^2(\Omega;\R^2)$ be such that 
		\begin{equation}\label{enbound}
			\sup_{\ep>0}\frac{\F_\ep(u_\ep)}{|\log\ep|}\le C,
		\end{equation}
		for some $C>0$\,. Then there exists $\mu\in X(\Omega)$ such that, up to a subsequence, $\|Ju_\ep-\pi\mu\|_{\flt,\Omega}\to 0$ (as $\ep\to 0$).
		\item[(ii)] ($\Gamma$-liminf inequality) For every $\mu\in X(\Omega)$ and for every $\{u_\ep\}_{\ep}\subset L^2(\Omega;\R^2)$ such that $\|Ju_\ep-\pi\mu\|_{\flt,\Omega}\to 0$ (as $\ep\to 0$)\,, it holds
		\begin{equation}\label{liminf}
			\pi|\mu|(\Omega)\le \liminf_{\ep\to 0}\frac{\F_\ep(u_\ep)}{|\log\ep|}\,.
		\end{equation}
		\item[(iii)] ($\Gamma$-limsup inequality) For every $\mu\in X(\Omega)$\,, there exists  $\{u_\ep\}_{\ep}\subset L^2(\Omega;\R^2)$ with $\|Ju_\ep-\pi\mu\|_{\flt,\Omega}\to 0$ (as $\ep\to 0$)\,, such that
		\begin{equation}\label{eq:limsup}
			\pi|\mu|(\Omega)\geq\limsup_{\ep\to 0}\frac{\F_\ep(u_\ep)}{|\log\ep|}\,.
		\end{equation}
	\end{itemize}
\end{theorem}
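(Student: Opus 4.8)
The plan is to prove the three parts of Theorem~\ref{mainthm} in the order (i), (ii), (iii), each relying on the ball construction technique of Sandier and Jerrard adapted to our setting, together with the flat-continuity properties of the weak Jacobian established in Section~\ref{jacobianSBV:sec}.

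\textbf{Compactness.} First I would exploit the energy bound \eqref{enbound} in two ways. From the second term, $\Huno(\overline S_{u_\ep})\le C\ep|\log\ep|$, so for $\ep$ small the closed jump set can be covered by a \emph{finite} family of balls $\{B_{r_i}(x_i)\}$ with $\sum_i r_i\le C\ep|\log\ep|^{1/2}$ (or any quantity vanishing as $\ep\to0$); this is where finiteness of the covering — guaranteed precisely because we closed $S_u$ — is essential. On the complement $\Omega\setminus\bigcup_i\overline B_{r_i}(x_i)$ the map $u_\ep$ is in $H^1$ with values in $\Ss^1$, hence $Ju_\ep$ is supported in the union of the cores, and on each ball boundary (away from the jump) the degree is an integer. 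Then I would run the ball construction starting from this initial family: merging and growing balls produces, at a fixed scale, a finite collection of disjoint balls $B^{(\ep)}_1,\dots,B^{(\ep)}_{M_\ep}$ carrying integer degrees $d^{(\ep)}_k$, with the lower bound $\frac12\int_{\Omega\setminus\bigcup_k B^{(\ep)}_k}|\nabla u_\ep|^2\,dx\ge \pi\big(\sum_k|d^{(\ep)}_k| - o(1)\big)|\log\ep|$. Combined with \eqref{enbound} this bounds $\sum_k|d^{(\ep)}_k|$ uniformly, so the measure $\pi\sum_k d^{(\ep)}_k\delta_{z^{(\ep)}_k}$ (centers of the final balls) has uniformly bounded mass; up to a subsequence it converges weakly-$*$, hence in the flat norm (being a bounded sum of atoms at bounded points, on a bounded domain), to some $\pi\mu$ with $\mu\in X(\Omega)$. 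The remaining and genuinely delicate point is to show $\|Ju_\ep-\pi\sum_k d^{(\ep)}_k\delta_{z^{(\ep)}_k}\|_{\flt,\Omega}\to0$; in the Ginzburg--Landau case \cite{AP} this uses $|Ju_\ep|(\Omega)\le C|\log\ep|$, which we do not have. I expect this is \textbf{the main obstacle}: one must instead localize, use the $H^1$ bound of $u_\ep$ on thin annuli around each final ball to control the flat distance of $Ju_\ep\llcorner B^{(\ep)}_k$ from $\pi d^{(\ep)}_k\delta_{z^{(\ep)}_k}$ directly via a slicing/degree argument, and — crucially — handle zero-average clusters inside $\Omega$ by the iterative machinery announced in the introduction, enclosing them in small regions across whose boundary $|\D u_\ep|$ is controlled (using that $\overline S_{u_\ep}\subset\overline{\Omega'}\subset\subset\Omega$ so all singularities stay interior).

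\textbf{$\Gamma$-liminf.} I would assume, without loss of generality, $\liminf_\ep \F_\ep(u_\ep)/|\log\ep|<\infty$ and pass to a subsequence realizing the liminf; then the compactness argument applies and gives the ball construction collection with $\sum_k|d^{(\ep)}_k|\to|\mu|(\Omega)$ essentially (lower semicontinuity of the number of singularities weighted by modulus, since the flat limit is $\pi\mu$ with $\mu=\sum z^n\delta_{x^n}$ and the $d^{(\ep)}_k$ must organize around the $x^n$). The ball construction lower bound then yields $\F_\ep(u_\ep)\ge \pi\big(\sum_k|d^{(\ep)}_k|-o(1)\big)|\log\ep| + \frac1\ep\Huno(\overline S_{u_\ep})\ge \pi\big(|\mu|(\Omega)-o(1)\big)|\log\ep|$, and dividing by $|\log\ep|$ and taking liminf gives \eqref{liminf}. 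The extra jump term only helps (it is nonnegative), so no loss there.

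\textbf{$\Gamma$-limsup.} For $\mu=\sum_{n=1}^N z^n\delta_{x^n}\in X(\Omega)$, with $x^n\in\Omega'$ after a harmless density reduction (general $x^n\in\Omega$ handled by a diagonal/relabeling argument moving atoms slightly into $\Omega'$ and using continuity of $|\mu|(\Omega)$ and of the flat norm), I would build the recovery sequence explicitly: around each $x^n$ take the standard vortex $u_\ep = \big(\frac{x-x^n}{|x-x^n|}\big)^{z^n}$ on an annulus $\ep\le|x-x^n|\le\delta$, which as an $\Ss^1$-valued map has a natural $SBV^2$ representative carrying a jump along a segment (the branch cut) from $x^n$ of length $\delta$ with amplitude a multiple of $2\pi$ in the phase — exactly the screw-dislocation picture in the introduction; inside $B_\ep(x^n)$ glue a bounded-energy plug, and far from the singularities set $u_\ep$ constant (or interpolate to satisfy whatever). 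The elastic energy contributes $\pi\sum_n (z^n)^2\log(\delta/\ep) + O(1)$ on the annuli — but to get the sharp constant $\pi|\mu|(\Omega)=\pi\sum_n|z^n|$ (rather than $\pi\sum(z^n)^2$) one must, when $|z^n|\ge2$, split the singularity $x^n$ into $|z^n|$ nearby simple vortices of degree $\mathrm{sgn}(z^n)$ at mutual distance $d_\ep\to0$ (but $d_\ep\gg\ep$), which is the standard dipole-separation device; the interaction cost is lower order provided $d_\ep$ is chosen suitably (e.g. $d_\ep = \ep^{1/2}$), and one sends $\delta\to0$ after $\ep\to0$ by a diagonal argument. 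The jump term: the total length of all branch cuts is $O(N\delta)$ (or $O(\delta)$ per simple vortex), so $\frac1\ep\Huno(\overline S_{u_\ep}) = O(\delta/\ep)$, which we need to be $o(|\log\ep|)$; this forces $\delta = \delta_\ep$ with $\delta_\ep/\ep \to\infty$ but $\delta_\ep/(\ep|\log\ep|)\to 0$, e.g. $\delta_\ep=\ep\sqrt{|\log\ep|}$, compatible with $\frac12\int|\nabla u_\ep|^2 = \pi\sum_n|z^n|\log(\delta_\ep/\ep)+o(|\log\ep|) = \pi|\mu|(\Omega)|\log\ep|(1+o(1))$ after the dipole splitting — wait, one must double-check: $\log(\delta_\ep/\ep)=\frac12\log|\log\ep| = o(|\log\ep|)$, so this particular choice kills the elastic energy too; instead take $\delta_\ep$ a fixed small $\delta$ independent of $\ep$, giving $\Huno/\ep = O(1/\ep)$ which is \emph{not} $o(|\log\ep|)$. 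The correct choice is $\delta_\ep$ with $\ep|\log\ep|\ll\delta_\ep\ll 1$ is impossible since we need $\delta_\ep/\ep = o(|\log\ep|)$; rather we accept $\delta$ fixed and absorb the jump by noting the recovery construction should keep $S_{u_\ep}$ of length comparable to $\ep$ itself — i.e. localize the branch cut to scale $\ep$ and let the ``transition region'' $\Sigma$ shrink, as emphasized in the introduction — so that $\frac1\ep\Huno(\overline S_{u_\ep})=O(1)$. Finally verify $\|Ju_\ep-\pi\mu\|_{\flt,\Omega}\to0$ using Remark~\ref{flat_rmk} (the constructed $u_\ep$ are in $BV\cap L^\infty$, converge strictly in $BV$ to a limit with controlled Jacobian, with supports of $Ju_\ep$ in a fixed compact set), and conclude \eqref{eq:limsup} by taking $\delta\to0$ diagonally. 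The bookkeeping of choosing all the scales $\ep\ll d_\ep\ll \delta$ consistently so that \emph{both} the elastic leading constant is exactly $\pi|\mu|(\Omega)$ \emph{and} the jump term is $o(|\log\ep|)$ is the technical heart of part (iii), but it follows the well-trodden GL/CR upper-bound scheme once the $SBV$ branch-cut representative is set up.
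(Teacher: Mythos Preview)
Your overall architecture matches the paper's: cover $\overline S_{u_\ep}$ by finitely many small balls, run the ball construction, invoke Theorem~\ref{alipons} for compactness and the liminf, and build the recovery sequence by hand. The liminf sketch is essentially the paper's proof. Two parts deserve comment.

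\textbf{Compactness.} You correctly locate the crux --- showing $\|Ju_\ep-\pi\widetilde\mu_\ep\|_{\flt,\Omega}\to0$ without a mass bound on $Ju_\ep$ --- and correctly flag it as the main obstacle. But ``localize, slice, iterative machinery'' is a placeholder for the entire technical content of the argument. The paper's Steps~2--7 implement this as follows: first fatten every ball of nonzero degree to radius at least $\ep$; then run a dipole-elimination iteration (Step~3) producing a family $\itezero$ in which every ball either has radius $\ge\ep/2$ \emph{or} satisfies $\int_{\partial B}|\nabla u_\ep|\,\mathrm d\Huno\le C|\log\ep|^{1/2}$; next (Step~4) enclose the ``large'' balls in sets $D^m$ with $\int_{\partial D^m}|\nabla u_\ep|\,\mathrm d\Huno\le C|\log\ep|^{3/2}$; and finally (Steps~6--7) build an auxiliary $v_\ep\in W^{1,p}(\Omega;\Ss^1)$ with $\mathrm{Det}(\nabla v_\ep)=\pi\widehat\mu_\ep$ and compare $\lambda_{u_\ep}$ to $\lambda_{v_\ep}$ by integrating by parts over the complement of $\bigcup D^m\cup\bigcup_{B\in\iteuno}B$, where the matching degrees $\deg(u_\ep,\partial D^m)=\deg(v_\ep,\partial D^m)$ kill the leading boundary contributions and the dichotomy above controls the remainders. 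This boundary-integral dichotomy is precisely the substitute for the missing bound on $|Ju_\ep|$; your sketch does not supply it or an alternative.

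\textbf{Limsup.} The meander over scales $\delta_\ep$ stems from a conceptual slip. The map $\big(\tfrac{x-x^n}{|x-x^n|}\big)^{z^n}$, as an $\Ss^1$-valued map, has \emph{no jump}: its lifting $z^n\vartheta(\cdot-x^n)$ jumps by $2\pi z^n$ along a ray, but the exponential erases a $2\pi$-integer jump. (Your own description --- ``amplitude a multiple of $2\pi$ in the phase'' --- already says this.) So there is no branch cut of length $\delta$ to pay for, and the competing-scales discussion is moot. The jump of $u_\ep$ appears only where the phase is \emph{cut off}: the paper sets $u_\ep=e^{\imath\sigma_\ep(x-x^i)z^i\vartheta(x-x^i)}$ in $B_\ep(x^i)$ with $\sigma_\ep$ a radial cutoff (zero near the center, one near $\partial B_\ep$), so $S_{u_\ep}$ is a segment of length $\ep/2$ per singularity and $\tfrac1\ep\Huno(\overline S_{u_\ep})=|\mu|(\Omega)/2=O(1)$ directly, while the elastic energy on $A_{\ep,R}(x^i)$ still contributes the full $\pi|\log\ep|$ per simple vortex. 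You do land on ``keep $S_{u_\ep}$ of length $\sim\ep$'' in the end, but via the wrong picture. Your plan to verify $Ju_\ep\to\pi\mu$ through Remark~\ref{flat_rmk} is what the paper does; note however that it requires exhibiting smooth $v_{\ep,k}\strictly u_\ep$ with $\sup_k|Jv_{\ep,k}|(\Omega)<\infty$ (an explicit construction occupying Step~2 of the paper's limsup), together with the strict $BV$-convergence $u_\ep\strictly\bar u$ to a $W^{1,1}$ map with $\mathrm{Det}(\nabla\bar u)=\pi\mu$.
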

In order to prove Theorem \ref{mainthm}, we will make use of Theorem \ref{alipons} below which is proven in  \cite[Theorem 3.2]{AP}) (see also  \cite[Theorem 2.4]{DLP}).
To this purpose, we introduce some notation. 

Let $V\subset\R^2$ be an open bounded set with Lipschitz continuous boundary.
For every finite family of pairwise (essentially\footnote{That is, whose closures are mutually disjoint.}) disjoint open balls $\B:=\{B^n\}_{n=1,\ldots,N}$ (with $N\in\N$)
we set 
$V(\B):=V\setminus \bigcup_{n=1}^N\overline{B}^n$ and we denote by $\rad(\B)$ the sum of the radii of the balls $B^n$, namely
$$\rad(\B):=\sum_{n=1}^Nr(B^n)\,,$$
where $r(B)$ denotes the radius of the ball $B$\,.
Moreover, for every $\mu\in X(V)$ of the form
\begin{align}\label{mu_B}
	\mu:=\sum_{n=1}^Nz^n\delta_{x(B^n)}\qquad \text{ with }z^n\in\Z\setminus\{0\}\,,
\end{align}
 we set
\begin{equation*}
	\Ad(\B,\mu,V):=\{u\in H^1(V(\B);\Ss^1)\,:\, \deg(u,\partial B^n)=z^n\textrm{ for every }n=1,\ldots,N\}\,.
\end{equation*}
Here and below, $x(B)$ denotes the center of the ball $B$\,.
For every $\B$ and $\mu$ as above, we set
\begin{equation}\label{def:coreradius}
	F(\B,\mu,V):=\min_{u\in \Ad(\B,\mu,V)}\int_{V(\B)}|\nabla u|^2\ud x\,.
\end{equation}

\begin{definition}[Merging procedure]\label{merging_def}
	\rm{Given a finite family $\B=\{B_{r^i}(x^i)\}_{i=1,\ldots,I}$ ($I\in\N$) of balls in $\R^2$, we define a new family $\widehat{\B}$ as follows. If the closures of two balls in $\B$ are not disjoint, then we replace the two balls with a unique ball which contains both of them and with radius less or equal to the sum of the radii of the original balls. After this, we repeat this replacement recursively, until as all the balls in the family are mutually essentially disjoint. The final family is $\widehat{\B}$.
		The procedure of passing from $\B$ to $\widehat{\B}$ is called {merging procedure} applied to $\B$. Notice that a merging procedure does not increase the sum of all the radii of the balls in the family.
	}
\end{definition}

The following result is proven in \cite[Proposition 2.2]{DLP}.
\begin{proposition}\label{ballconstr}
	{Let $\B$ be a finite family of pairwise essentially disjoint balls in $\R^2$, and let $\mu\in X(V)$ be of the form \eqref{mu_B}.}
	Then, there exists a one-parameter family of open balls $\B(t)$ with $t\ge 0$  such that, setting $U(t):=\bigcup_{B\in\B(t)} B$, the following properties hold true:
	\begin{enumerate}
		\item $\B(0)=\B \, $;
		\item $ U(t_1)\subset U(t_2)$  for any $0\le t_1<t_2 \, $;
		\item the balls in $\B(t)$ are pairwise (essentially) disjoint;
		\item for any $0\le t_1<t_2$ and for any open set $U \subseteq \R^2 \,$, 
		\begin{equation*}
			F(\B,\mu,U\cap(U(t_2)\setminus U(t_1)))\ge\pi\sum_{\newatop{B\in\B(t_2)}{B\subseteq U}}|\mu(B)|\log\frac{1+t_2}{1+t_1} \, ;
		\end{equation*} 
		\item  $\displaystyle \sum_{B\in \B(t)}r(B)\le(1+t)\sum_{B\in \B}r(B)$, where $r(B)$ denotes the radius of the ball $B \, .$
	\end{enumerate}
\end{proposition}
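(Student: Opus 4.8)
The plan is to build the family $\B(t)$ by the classical ball construction of \cite{S,J}, i.e.\ by alternating a \emph{dilation} phase with a \emph{merging} phase, and then to read off the five properties; the easy ones come out of the construction directly. Write $\rho_0:=\rad(\B)$ and set $\B(0):=\B$, so that (1) holds. If $\B(s)$ is the pairwise essentially disjoint family at the start of a dilation phase, for $t\ge s$ let $\B(t)$ be obtained by keeping all centres fixed and multiplying every radius by the common factor $\frac{1+t}{1+s}$; increase $t$ until the first instant $\tau>s$ at which two of the growing closed balls become tangent (if $\B(s)$ is a single ball, dilate for all $t\ge s$ and stop). At such a $\tau$ apply the merging procedure of Definition \ref{merging_def} \emph{instantaneously} (the parameter $t$ does not change): it yields a family with strictly fewer balls, with mutually disjoint closures, and with total radius not exceeding the previous one; call it $\B(\tau)$ and restart a dilation phase from $s:=\tau$. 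Since each merging lowers the (finite) number of balls, there are at most $N-1$ mergings, hence finitely many dilation phases on every bounded interval and one unbounded final phase, and $\B(t)$ is defined for all $t\ge0$. Now (2) holds because during a dilation phase every ball grows and a merging replaces balls by balls containing them; (3) holds because the balls are strictly disjoint in the interior of each dilation phase and have disjoint closures right after each merging; and (5) follows by induction over the phases, since a dilation phase starting at $s$ scales the total radius at time $t$ by $\frac{1+t}{1+s}$ and a merging does not increase it, starting from $\rad(\B(0))=\rho_0$.

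The content of the statement is the lower bound (4). Fix $0\le t_1<t_2$, an open set $U$, and an arbitrary competitor $u\in\Ad(\B,\mu,U\cap(U(t_2)\setminus U(t_1)))$; it suffices to bound $\int|\nabla u|^2$ from below on the domain of $u$. Let $B\in\B(t_2)$ with $B\subseteq U$ and $\mu(B)\ne0$, and for $t\in[t_1,t_2]$ call \emph{ancestors} of $B$ the balls of $\B(t)$ contained in $B$: they are pairwise disjoint, their $\mu$-weights sum to $\mu(B)$ by additivity, and around each ancestor $B'$ the map $u$ has degree $\mu(B')$. List the dilation phases inside $[t_1,t_2]$ as $[t_1=s_0,s_1],\dots,[s_{k-1},s_k=t_2]$. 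During $[s_i,s_{i+1}]$ each ancestor $B'$ sweeps a concentric annulus of radial ratio $\frac{1+s_{i+1}}{1+s_i}$ contained in $B\setminus U(t_1)\subseteq U\cap(U(t_2)\setminus U(t_1))$, and the standard logarithmic lower bound for the Dirichlet energy of an $\Ss^1$-valued map on an annulus gives at least $2\pi\,\mu(B')^2\log\frac{1+s_{i+1}}{1+s_i}$ on it. These annuli --- over all ancestors and all phases --- are pairwise disjoint, so summing over the ancestors of $B$ with $\sum_{B'}\mu(B')^2\ge\sum_{B'}|\mu(B')|\ge|\mu(B)|$ (integer weights and $\mu(B)=\sum_{B'}\mu(B')$), then over the phases so that the logarithms telescope, and finally over the balls $B\in\B(t_2)$ with $B\subseteq U$, one obtains
\begin{equation*}
\int_{\left(U\cap(U(t_2)\setminus U(t_1))\right)\setminus\bigcup_n\overline{B}^n}|\nabla u|^2\ud x\ \ge\ 2\pi\log\frac{1+t_2}{1+t_1}\sum_{\newatop{B\in\B(t_2)}{B\subseteq U}}|\mu(B)|\ \ge\ \pi\log\frac{1+t_2}{1+t_1}\sum_{\newatop{B\in\B(t_2)}{B\subseteq U}}|\mu(B)|\,,
\end{equation*}
and taking the infimum over $u\in\Ad(\B,\mu,U\cap(U(t_2)\setminus U(t_1)))$ proves (4).

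The hard part is the bookkeeping behind (4): tracking the ancestors of a ball across the mergings, and checking that the dilation annuli used in the various phases are genuinely pairwise disjoint and all lie in $\big(U\cap(U(t_2)\setminus U(t_1))\big)\setminus\bigcup_n\overline{B}^n$. The one structural point, rather than a mere verification, is that the total absolute weight $\sum_{B'}|\mu(B')|$ of the ancestors can only \emph{drop} across a merging --- since $|\mu(B)|\le\sum_{B'}|\mu(B')|$ --- so the estimate must be anchored to the weight $|\mu(B)|$ of the \emph{final} ball; this is precisely why (4) is phrased with $\sum_{B\in\B(t_2)}|\mu(B)|$ and not with $\sum_n|z^n|$. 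The two remaining ingredients --- the identification of the degree around an ancestor ball with $\mu(B')$, and the logarithmic energy inequality on an annulus --- are classical and may be imported from the Ginzburg--Landau toolbox.
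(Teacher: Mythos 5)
The paper itself offers no proof of this proposition --- it is quoted verbatim from \cite[Proposition 2.2]{DLP} (see also \cite[Theorem 3.2]{AP}) --- and your sketch reconstructs precisely the Sandier--Jerrard ball-construction argument on which that cited proof rests (alternating dilation and merging phases, the circle-by-circle degree estimate, the telescoping logarithms, and the anchoring of the bound to $|\mu(B)|$ for the final ball), so it is essentially the same approach and is correct as a sketch. The one step that would require genuine care in a full write-up is the one you flag yourself as ``imported'': identifying the degree of the competitor on the moving circles $\partial B_\rho(x')$ with $\mu(B')$, which needs the verification that each such circle, together with the region separating it from the initial balls it encloses, lies inside $U\cap(U(t_2)\setminus U(t_1))\setminus\bigcup_n\overline{B}^n$ where the competitor is actually defined and $\Ss^1$-valued.
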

For every $\B$ and $\mu$ as above, we set $\Ccal(1):=\{B\in\B(1)\,:\,\overline{B}\subset V\}$ and we define 
\begin{equation}\label{tilde}
	\widetilde\mu:=\sum_{B\in\Ccal(1)}\mu(B)\delta_{x(B)}\,.
\end{equation}
We can now state the crucial result on which the proof of Theorem \ref{mainthm} is based. 
\begin{theorem}\label{alipons}
	Let $V$ be a bounded  open set with Lipschitz boundary. For every $\ep>0$ let $\B_\ep:=\{B_\ep^n\}_{n=1,\ldots,N_\ep}$ (with $N_\ep\in\N$) be a (finite) family of pairwise (essentially) disjoint open balls with $\rad(\B_\ep)\to 0$ as $\ep\to 0$ and let $\mu_\ep:=\sum_{n=1}^{N_\ep}z_{\ep}^n\delta_{x(B_\ep^n)}$ with $z_\ep^n\in\Z$ for every $n=1,\ldots,N_\ep$\,. 
	Assume that
	\begin{equation*}
		\sup_{\ep>0}\frac{F(\B_\ep,\mu_\ep,V)}{|\log\rad(\B_\ep)|}\le C\,,
	\end{equation*} 
	for some constant $C>0$ independent of $\ep$\,.
	Then, the following facts hold true.
	\begin{itemize}
		\item[(i)] If $\widetilde\mu_\ep$ are the measures defined in \eqref{tilde} starting from the family $\Ccal_\ep(1)=\{B\in\B_\ep(1)\,:\,\overline{B}\subset V\}$, then $|\widetilde\mu_\ep|(V)\le C|\log\ep|$\,, and, up to a subsequence, $\widetilde\mu_\ep\res V\fla\mu$ (as $\ep\to 0$) for some $\mu\in X(V)$\,.
		\item[(ii)] $\pi|\mu|(V)\le \liminf_{\ep\to 0}\frac{F(\B_\ep,\mu_\ep,V)}{|\log\rad(\B_\ep)|}$\,.
	\end{itemize}
\end{theorem}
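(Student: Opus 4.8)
Throughout write $\delta_\ep:=\rad(\B_\ep)\to0^+$, so that the standing hypothesis reads $F(\B_\ep,\mu_\ep,V)\le C|\log\delta_\ep|$. The plan is to deduce all three assertions from the ball construction of Proposition~\ref{ballconstr} applied to $(\B_\ep,\mu_\ep)$, together with one well-chosen intermediate stopping time. Let $\{\B_\ep(t)\}_{t\ge0}$ and $U_\ep(t):=\bigcup_{B\in\B_\ep(t)}B$ be as in Proposition~\ref{ballconstr}, and for $s\ge0$ put $\widetilde\mu_\ep^{(s)}:=\sum_{B\in\B_\ep(s),\ \overline B\subset V}\mu_\ep(B)\,\delta_{x(B)}$, so that $\widetilde\mu_\ep=\widetilde\mu_\ep^{(1)}$. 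Choose $\rho_\ep\to0^+$ with $\rho_\ep|\log\delta_\ep|\to0$ and $\log(1/\rho_\ep)=o(|\log\delta_\ep|)$ (e.g. $\rho_\ep:=|\log\delta_\ep|^{-2}$), and pick $t_\ep\ge1$ with $(1+t_\ep)\delta_\ep=\rho_\ep$; by property~(5) of Proposition~\ref{ballconstr} the radii of the balls of $\B_\ep(t_\ep)$ sum to at most $\rho_\ep$, and $\log(1+t_\ep)=|\log\delta_\ep|-\log(1/\rho_\ep)=|\log\delta_\ep|(1+o(1))$.

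\emph{Proof of (i).} Property~(4) with $U=V$, $t_1=0$, $t_2=1$ yields $F(\B_\ep,\mu_\ep,V)\ge\pi(\log2)\sum_{B\in\B_\ep(1),\,B\subseteq V}|\mu_\ep(B)|\ge\pi(\log2)\,|\widetilde\mu_\ep|(V)$, whence $|\widetilde\mu_\ep|(V)\le\tfrac{C}{\pi\log2}|\log\delta_\ep|$ (which is $\le C|\log\ep|$ in the regime $\rad(\B_\ep)\gtrsim\ep^{c}$ relevant to the applications). The same inequality with $t_2=t_\ep$, using $\{B\in\B_\ep(t_\ep):\overline B\subset V\}\subseteq\{B\in\B_\ep(t_\ep):B\subseteq V\}$, gives $\pi\log(1+t_\ep)\,|\widetilde\mu_\ep^{(t_\ep)}|(V)\le F(\B_\ep,\mu_\ep,V)$, hence $|\widetilde\mu_\ep^{(t_\ep)}|(V)\le\tfrac{C|\log\delta_\ep|}{\pi|\log\delta_\ep|(1+o(1))}$, so $\sup_\ep|\widetilde\mu_\ep^{(t_\ep)}|(V)<+\infty$. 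Since $\widetilde\mu_\ep^{(t_\ep)}$ is a sum of Dirac masses with integer weights and uniformly bounded total variation, for each $\ep$ it has at most $C'$ atoms, each of weight at most $C'$ in modulus; hence, up to a subsequence, the number of atoms and the weights stabilise and the positions converge in $\overline V$, so that $\widetilde\mu_\ep^{(t_\ep)}\weakstar\mu$, and in particular $\widetilde\mu_\ep^{(t_\ep)}\fla\mu$, for some $\mu\in X(V)$ — atoms converging to $\partial V$ contribute nothing in the limit because in \eqref{flat_norm_mu} the admissible test functions vanish on $\partial V$. Finally, in passing from time $1$ to time $t_\ep$ each atom of $\widetilde\mu_\ep$ is either removed from $\widetilde\mu_\ep^{(t_\ep)}$ — in which case it lies within $2\rho_\ep$ of $\partial V$ — or is displaced by at most $\rad(\B_\ep(t_\ep))\le\rho_\ep$, while masses can only cancel; therefore $\|\widetilde\mu_\ep-\widetilde\mu_\ep^{(t_\ep)}\|_{\flt}\le2\rho_\ep\,|\widetilde\mu_\ep|(V)\le\tfrac{2C}{\pi\log2}\,\rho_\ep|\log\delta_\ep|\to0$, and hence $\widetilde\mu_\ep\res V\fla\mu$ as well.

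\emph{Proof of (ii).} Write $\mu=\sum_{i=1}^Kz^i\delta_{x^i}$ with $x^i\in V$, and fix $\rho>0$ so that the closed balls $\overline{B(x^i,\rho)}$ are pairwise disjoint and contained in $V$. For each $i$ select $\rho'\in(0,\rho/2)$ with $\mu(\partial B(x^i,\rho'))=0$; by the weak-$\ast$ convergence just established, the balls of $\B_\ep(t_\ep)$ with centre in $B(x^i,\rho')$ (which have radius $\le\rho_\ep$, hence closure inside $B(x^i,\rho)$ for $\ep$ small) carry a total mass of modulus $|\widetilde\mu_\ep^{(t_\ep)}(B(x^i,\rho'))|\to|z^i|$, so $\sum_{B\in\B_\ep(t_\ep),\,B\subseteq B(x^i,\rho)}|\mu_\ep(B)|\ge|z^i|-o(1)$. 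Applying property~(4) with $U=B(x^i,\rho)$, $t_1=0$, $t_2=t_\ep$, and summing over $i$ the Dirichlet energies of the minimiser defining $F(\B_\ep,\mu_\ep,V)$ over the pairwise disjoint sets $B(x^i,\rho)\cap(U_\ep(t_\ep)\setminus U_\ep(0))$ (superadditivity of the Dirichlet integral), we obtain $F(\B_\ep,\mu_\ep,V)\ge\pi\log(1+t_\ep)\sum_{i=1}^K(|z^i|-o(1))=\pi|\log\delta_\ep|(1+o(1))(|\mu|(V)-o(1))$. Dividing by $|\log\rad(\B_\ep)|=|\log\delta_\ep|$ and letting $\ep\to0$ gives $\pi|\mu|(V)\le\liminf_{\ep\to0}F(\B_\ep,\mu_\ep,V)/|\log\rad(\B_\ep)|$.

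The step I expect to be the main obstacle is the compactness in (i): the a priori bound $|\widetilde\mu_\ep|(V)\le C|\log\delta_\ep|$ is unbounded, so flat precompactness of $\widetilde\mu_\ep$ toward a \emph{finite} atomic measure with \emph{integer} weights is not automatic from the energy bound alone. The device is to run the ball construction up to the intermediate scale $\rho_\ep$, chosen large enough that $\log(1+t_\ep)=|\log\delta_\ep|(1+o(1))$ — which is exactly what forces the sharp prefactor $\pi$ in (ii) — yet small enough that $\rho_\ep|\log\delta_\ep|\to0$: this trades a vanishing loss in the extracted-energy logarithm for a uniformly bounded total variation of $\widetilde\mu_\ep^{(t_\ep)}$ while keeping it flat-close to $\widetilde\mu_\ep$. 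The only remaining technicalities are the bookkeeping of atoms drifting out of $V$ through the boundary (controlled again by $\rad(\B_\ep(t_\ep))\to0$) and the harmless discrepancy between $|\log\rad(\B_\ep)|$ and $|\log\ep|$ when $\rad(\B_\ep)$ and $\ep$ are polynomially comparable.
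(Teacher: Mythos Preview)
The paper does not give its own proof of this theorem: it simply cites \cite[Theorem~3.2]{AP}. Your argument is a correct, self-contained reconstruction of that result, and the strategy you use --- run the ball construction of Proposition~\ref{ballconstr} up to an intermediate time $t_\ep$ chosen so that $\log(1+t_\ep)=|\log\delta_\ep|(1+o(1))$ while $\rad(\B_\ep(t_\ep))|\log\delta_\ep|\to0$, obtain a \emph{uniformly} bounded total variation for $\widetilde\mu_\ep^{(t_\ep)}$, extract a subsequential atomic limit, and transfer flat convergence back to $\widetilde\mu_\ep$ via the estimate $\|\widetilde\mu_\ep-\widetilde\mu_\ep^{(t_\ep)}\|_{\flt}\le 2\rho_\ep|\widetilde\mu_\ep|(V)$ --- is exactly the Alicandro--Ponsiglione device. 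Your handling of atoms escaping through $\partial V$ (using that test functions in $\Cc^{0,1}(V)$ vanish there, hence $|\varphi(x)|\le\mathrm{dist}(x,\partial V)$) and the superadditivity step in (ii) are both standard and correct.

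Two minor remarks. First, the bound $|\widetilde\mu_\ep|(V)\le C|\log\ep|$ in the statement (with $\ep$ rather than $\rad(\B_\ep)$) is, as you note, only literally valid when $\rad(\B_\ep)$ and $\ep$ are polynomially comparable; what the ball construction actually gives is $|\widetilde\mu_\ep|(V)\le\tfrac{C}{\pi\log 2}|\log\rad(\B_\ep)|$, and this is how the bound is used later in the paper (see \eqref{nuovobound1} and \eqref{boundmisura}). Second, in part~(ii) you invoke weak-$\ast$ convergence of $\widetilde\mu_\ep^{(t_\ep)}$ on balls with $\mu$-null boundary; since you only established flat convergence above, it is worth saying explicitly that uniform boundedness of $|\widetilde\mu_\ep^{(t_\ep)}|(V)$ upgrades flat convergence to weak-$\ast$ convergence (or, equivalently, that the positions-and-weights extraction you performed already yields weak-$\ast$ convergence directly). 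Neither point is a gap.
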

\begin{proof}
	See \cite[Theorem 3.2]{AP}.
\end{proof}
%
%
We are now in a position to prove Theorem \ref{mainthm}. The more involved part is the compactness property. For this reason we have splitted the argument in several steps.

\begin{proof}[Proof of Theorem \ref{mainthm}]
	{\it Proof of (i).} We start by covering with balls the jump set of $u_\ep$ and define the measure $\widetilde \mu_\ep$ in Step 1. In Steps 2 and 3 we suitably modify these balls in order to show - in Steps 4, 5, 6 and 7 - that the obtained measures $\pi\widetilde \mu_\ep$ are close to $Ju_\ep$ with respect to the flat distance. 
	Throughout the proof, the symbol $C$ denotes an absolute positive constant independent of the parameters, which might change from line to line.
	\medskip
	
	\textbf{Step 1: Construction of the starting family of balls.}
	By the energy bound \eqref{enbound}, we have that
	\begin{equation}\label{boundedjump}
		\Huno(\overline{S}_{u_\ep})\le C\ep|\log\ep|\,,
	\end{equation}
for all $\ep>0$.  By the very definition of Hausdorff measure, since $\overline{S}_{u_\ep}$ is compact, there exists a finite family of open balls $\B_\ep$ (in $\R^2$) such that $\overline{S}_{u_\ep}\subset\bigcup_{B\in\B_\ep}B$ and
	\begin{equation}\label{sumradii}
		\rad(\B_\ep)\le C\ep|\log\ep|\,,
	\end{equation}
	for some $C>0$\,. The number of balls in $\B_\ep$ depends on $\ep$. Moreover we notice that 
	\begin{equation}\label{huno}
		u_\ep\in H^1(\Omega(\B_\ep);\Ss^1)\,,
	\end{equation}
where we recall $\Omega(\B_\ep):=\Om\setminus (\cup_{B\in \B_\ep}\overline B)$.
	By \eqref{sumradii} and recalling that $\overline S_\ep\subset \overline{\Om'}$, we can assume, for $\ep$ small enough, that all the balls in $\B_\ep$ are contained in $\Om$\,.
	Up to applying a merging procedure (as in Definition \ref{merging_def}) for the balls in $\B_\ep$, we can assume without loss of generality that these balls are mutually (essentially) disjoint, and still satisfy \eqref{sumradii}.
	By \eqref{huno}, \eqref{sumradii} and \eqref{enbound}, for $\ep$ small enough it holds 
	\begin{equation}\label{nuovobound1}
		F({\B}_\ep,\mu_\ep,\Omega)\le\F_\ep(u_\ep)\le C|\log\ep|\le C|\log\rad({\B}_\ep)|\,,
	\end{equation}
	where $F$ is defined in \eqref{def:coreradius}.
	Using Proposition \ref{ballconstr}, we set 
	\begin{equation}\label{ccal}
		\Ccal_\ep:=\{B\in\B_\ep(1)\,:\, \overline B\subset\Omega\}
	\end{equation}
	and, according to \eqref{tilde}, we define
	\begin{equation}\label{tilde2}
		\widetilde\mu_\ep:=\sum_{B\in\Ccal_\ep}\deg(u_\ep,\partial B)\delta_{x(B)}\,.
	\end{equation}
	Notice that,  by definition of $\mathcal{AD}(\Omega,\Omega')$\,, actually $\Ccal_\ep\equiv \B_\ep(1)$ for $\ep$ small enough (i.e., all the closures of the balls in $\B_\ep(1)$ are contained in $\Om$).
	Therefore, by \eqref{nuovobound1}, we can apply Theorem \ref{alipons}(i) with $V=\Omega$, deducing that 
	\begin{equation}\label{boundmisura}
		|\widetilde\mu_\ep|({\Omega})\leq C|\log\ep|, 
	\end{equation}
	and, up to a subsequence, $\widetilde{\mu}_\ep\fla{\mu}$ (as $\ep\to 0$) for some ${\mu}\in X({\Omega})$\,.
	In order to conclude the proof of Theorem \ref{mainthm}(i) it is enough to show that 
	\begin{equation}\label{flatcomp}
		\|Ju_\ep-\pi\widetilde{\mu}_\ep\|_{\flt,\Omega}\to 0\qquad\textrm{ as }\ep\to 0\,.
	\end{equation}
	
	\medskip
	\textbf{Step 2: Fattening of the non-zero average clusters.} 
	Let $\Ccal_\ep$  be the family of balls in \eqref{ccal}. 
	Since $\{u_\ep\}_\ep\subset\mathcal{AD}(\Omega,\Omega')$ and in view of \eqref{sumradii}\,, for some open set $\Omega''$ with $\Omega'\subset\subset \Omega''\subset\subset\Omega$\,, we have that,
	for $\ep$ small enough, $B\subset \Omega''$ for every $B\in\Ccal_\ep$\,.
	We set 
	\begin{equation*}
	\nonnulla:=\{B\in \Ccal_\ep:\deg(u_\ep,\partial B)\neq 0\}\,,
	\end{equation*}
	and 
	\begin{equation*}
	\nulla:=\{B\in \Ccal_\ep:\deg(u_\ep,\partial B)= 0\}\,.
	\end{equation*}
	For every $B\in\nonnulla$ we replace the ball $B$ by $B^{\mathrm{mod}}:=B_{\ep\vee r(B)}(x(B))$ (hence increasing the radius up to $\ep$ if it is smaller). 
	Again, since the balls
	in $\widetilde{\Ccal}_\ep:=\nulla\cup\{B^{\mathrm{mod}}\,:\,B\in\nonnulla\}$
	 are not necessarily mutually disjoint, we pass to a merging procedure described in Definition \ref{merging_def}.

	By \eqref{sumradii} and  \eqref{boundmisura}, we still have
	\begin{align}\label{sumradii2}
		\rad(\widetilde{\Ccal}_\ep)\le C\ep|\log\ep|\,,
	\end{align}
	where $\widetilde{\Ccal}_\ep$ is now the new family of balls (not-relabelled after the merging procedure).
	Setting 
	\begin{equation*}
	\widetilde{\Ccal}_\ep^{\neq 0}:=\{B\in\widetilde{\Ccal}_\ep\,:\,\deg(u_\ep,\partial B)\neq 0\}\,,
	\end{equation*}
	by construction and by \eqref{boundmisura}, we have that
	\begin{equation*}
	\sharp\widetilde{\Ccal}_\ep^{\neq 0}\le \sharp\nonnulla\le C|\log\ep|\,.
	\end{equation*}
Once again, due to \eqref{sumradii2} and the fact that the original balls in $\Ccal_\ep$ have centers in $\Om'$, we can assume that the balls in $\widetilde\Ccal_\ep$ are all contained in $\Om''$.
	\medskip
	
	\textbf{Step 3: Dipoles elimination procedure. } 
	In this step we construct a family $\itezero$ of pairwise disjoint balls with
	\begin{equation}\label{sommaraggi}
	\sum_{B\in\itezero}r(B)\le 5\rad(\Ccal_\ep)\le\overline C\ep|\log\ep|,
	\end{equation}
	(with $\overline C>0$ a fixed constant) and
	\begin{equation}\label{conten}
	\bigcup_{B\in\Ccal_\ep}B\subset\bigcup_{B\in\itezero}B\,,
	\end{equation}
	such that
	{for any $B\in\itezero$ at least one of the following conditions is satisfied}
	\begin{equation}\label{bla}
	\textrm{(i) } r(B)\ge \frac{\ep}{2}\qquad\qquad\qquad\textrm{(ii)}\int_{\partial B}|\nabla u_\ep|\ud\Huno\le C|\log\ep|^{\frac{1}{2}}\,,
	\end{equation}
	for some universal constant $C>0$ (independent of $\ep$).
	
\medskip
	We start by classifying the balls in $\Ccal_\ep$ by setting
	\begin{align*}
		\mino:=\Big\{B\in \Ccal_\ep\,:\, r(B)<\frac\ep 2\Big\}\quad\textrm{and}\quad \magg:  =\Big\{B\in \Ccal_\ep\,:\, r(B)\ge\frac\ep 2\Big\}\,.
	\end{align*}
Notice that, if $\mino=\varnothing$\,, then,  we can set $\itezero:=\Ccal_\ep$ and, by \eqref{sumradii2}, the claim immediately follows.
	
If this is not the case, then  we adopt the iterative procedure described below.
For every $k=0,1,\ldots$, we will construct a pair of family of balls $(\Ccal_\ep(k);\itezero(k))$ and we set $$\ite(k):=\Ccal_\ep(k)\cup\itezero(k).$$
We classify the balls in $\Ccal_\ep(k)$ into two subclasses
	\begin{align*}
		\mino(k):=\Big\{B\in \Ccal_\ep(k)\,:\, r(B)<\frac\ep 2\Big\}\quad\textrm{and}\quad \magg(k):  =\Big\{B\in \Ccal_\ep(k)\,:\, r(B)\ge\frac\ep 2\Big\}\,,
	\end{align*}
and the balls in $\itezero(k)$ into two further subclasses
	\begin{align*}
		\iteuno(k):=\Big\{B\in \itezero(k)\,:\, r(B)<\frac\ep 2\Big\}\quad\textrm{and}\quad \itedue(k):  =\Big\{B\in \Ccal_\ep(k)\,:\, r(B)\ge\frac\ep 2\Big\}\,.
	\end{align*}
We initialize such families by setting $\Ccal_\ep(0):=\Ccal_\ep$\,, $\mino(0):=\mino$\,, $\magg(0):=\magg$\,, and $\itezero(0)=\iteuno(0)=\itedue(0)=\varnothing$\,.
We will now construct recursively a pair of families   $(\Ccal_\ep(k);\itezero(k))$ so that for a finite number $K_\ep\in \N$ we set $$\itezero:=\itezero(K_\ep).$$
The pair 
$(\Ccal_\ep(k);\itezero(k))$ will be such that 
for every $k=0,1,\dots, K_\ep-1$, $\Ccal_\ep(k)\supset \Ccal_\ep(k+1)$, $\itezero(k)\subset\itezero(k+1)$; moreover for all $k=0,1,\dots, K_\ep,$
\begin{equation}\label{disjo}
\bigcup_{B\in\Ccal_\ep(k)}B\cap\bigcup_{B\in\itezero(k)}B=\varnothing\,,
\end{equation}
\begin{equation}\label{contenute}
\Ccal_\ep(k)\subset\Ccal_\ep(0)\,,
\end{equation}
\begin{equation}\label{sommaraggiite}
\rad(\ite(k))\le \rad(\Ccal_\ep(0))+4\sum_{\widetilde B\in\itezero(k)}\sum_{\newatop{B\in\Ccal_\ep(0)}{B\subset\widetilde{B}}}r(B)\,,
\end{equation}
and {for any }$B\in\itezero(k)$
\begin{equation}\label{blaite}
\textrm{either }r(B)\ge \frac{\ep}{2}\quad\textrm{or}\quad\int_{\partial B}|\nabla u_\ep|\ud\Huno\le C|\log\ep|^{\frac{1}{2}}\,.
\end{equation}
We remark that \eqref{conten} now reads as
$$\bigcup_{B\in\Ccal_\ep(0)}B\subset\bigcup_{B\in\itezero(K_\ep)}B\,.$$
At the end of the procedure we will also have $$\Ccal_\ep(K_\ep)=\varnothing.$$
Eventually, we observe that \eqref{blaite} is equivalent to require
\begin{equation*}
	\int_{\partial B}|\nabla u_\ep|\ud\Huno\le C|\log\ep|^{\frac{1}{2}}\quad\textrm{for any }B\in\iteuno(k)\,.
\end{equation*}

	
	The pair $(\Ccal_\ep(0);\itezero(0))$ satisfies \eqref{disjo}, \eqref{contenute}, \eqref{sommaraggiite} and \eqref{blaite}.
	Let $k=0,1,\ldots$ and assume that  $(\Ccal_\ep(k);\itezero(k))$ satisfies \eqref{disjo}, \eqref{contenute}, \eqref{sommaraggiite} and \eqref{blaite}.
	\medskip
	
	If $\mino(k)=\varnothing$\,, then setting $k=:K_\ep$ we conclude the procedure according to  
	{formula \eqref{conclusionformula} below.}
	Otherwise, let $B_r(x)\in\mino(k)$\,.
	We set 
	\begin{equation*}
	\begin{aligned}
	\hatmino(k):=\mino(k)\setminus \{B_r(x)\}\,;\qquad\qquad&
	\hatmagg(k):=\magg(k)\,;\\
	\hatiteuno(k):=\iteuno(k)\,;\qquad\qquad&
	\widehat{\mathscr{I}}_\ep^{>}(k):=\itedue(k)\,;\\
	\widehat{\Ccal}_\ep(k):=\hatmino(k)\cup\hatmagg(k)\,;\qquad\qquad&\hatitezero(k):=\hatiteuno(k)\cup \hatitedue(k)\,.
	\end{aligned}
	\end{equation*}
	Finally, it is convenient to introduce $$\hatite(k):=\widehat{\Ccal}_\ep(k)\cup\hatitezero(k).$$
	Trivially, $(\widehat{\Ccal}_\ep(k)\cup\{B_r(x)\};\hatitezero(k))\equiv({\Ccal}_\ep(k);\itezero(k))$\,. Furthermore, in view of the inductive assumption,
	\begin{equation}\label{hatdisjo}
B_r(x)\cap\bigcup_{B\in\widehat{\Ccal}_\ep(k)}B\cap\bigcup_{B\in\hatitezero(k)}B=\varnothing\,,
\end{equation}
\begin{equation}\label{hatcontenute}
\widehat{\Ccal}_\ep(k)
\subset\Ccal_\ep(0)\,,
\end{equation}
\begin{equation}\label{hatsommaraggiite}
\rad(\hatite(k))+r\le \rad(\Ccal_\ep(0))+4\sum_{\widetilde B\in\hatitezero(k){\cup \{B_r(x)\}}}\sum_{\newatop{B\in\Ccal_\ep(0)}{B\subset\widetilde{B}}}r(B)\,,
\end{equation}
and for all $B\in\hatitezero(k)$ we have that
\begin{equation}\label{hatblaite}
\textrm{either }r(B)\ge \frac{\ep}{2}\quad\textrm{or}\quad\int_{\partial B}|\nabla u_\ep|\ud\Huno\le C|\log\ep|^{\frac{1}{2}}\,.
\end{equation}
{Notice that \eqref{sommaraggiite} implies \eqref{hatsommaraggiite}}.
{Moreover, we observe that \eqref{hatsommaraggiite} is equivalent to say that
\begin{equation*}
\rad(\hatite(k))+r\le \rad(\Ccal_\ep(0))+4\sum_{\widetilde B\in\hatitezero(k)}\sum_{\newatop{B\in\Ccal_\ep(0)}{B\subset\widetilde{B}}}r(B)+4r\,,
\end{equation*}
{in the special case that $B_r(x)\in \Ccal_\ep(0)$ (that is the case, for instance, if $k=0$).}
%
}

	 We now define
		\begin{equation}\label{defTrx}
		T(r,x):=\inf\Big\{t\geq 2r: \mathcal H^1(A_{B_r(x)}(t))\geq \frac{t-r}{2}\Big\}\,,
		\end{equation}
		where 
		\begin{align*}
			A_{B_r(x)}(t):=\{s\in[r,t]:\partial B_s(x)\cap \overline B=\varnothing,\; \forall B\in {\hatite(k)}\}\,,
		\end{align*}
		for all $t\geq 2r$\,. Notice that, being the map $t\mapsto \mathcal H^1(A_{B_r(x)}(t))$ continuous, the infimum in \eqref{defTrx} is actually a minimum.
	Furthermore, we set
		\begin{equation*}
		{d(x)}:=\textrm{dist}\Big(x,\bigcup_{B\in \hatitezero(k)}B\Big)\,,
		\end{equation*}
		{with the standard convention that $d(x)=+\infty$ if $\hatitezero(k)=\varnothing$\,.}
		
	\textit{Case 1: $T(r,x)\leq d(x)$\,.} In such a case $B_{T(r,x)}(x)$ does not intersect any ball in $\hatitezero(k)$\,. We distinguish two subcases.
	
	\vskip1mm
	\textit{Subcase 1a: $T(r,x)<\frac\ep2$\,.}
	In such a case, the ball $B_{T(r,x)}(x)$ does not intersect any ball in $\hatmagg(k)$\,, since balls in $\hatmagg(k)$ have radius  at least $\frac\ep2$\,.
		 On the other hand, it might contain some balls in $\hatmino(k)$ {and it contains for sure $B_r(x)$}\,.
		
		We choose a number $R=R(r,x)\in A_{B_r(x)}(T(r,x))$ such that 
		\begin{align}\label{stuno}
			\int_{\partial B_{R(r,x)}(x)}|\nabla u_\ep|\ud\mathcal H^1\leq\frac{1}{\mathcal H^1(A_{B_r(x)}(T(r,x)))} \int_{A_{B_r(x)}(T(r,x))}\int_{\partial B_s(x)}|\nabla u_\ep|\ud\mathcal H^1\ud s.
		\end{align}
		Denoting by $A\subset B_{T(r,x)}(x)$ the set (in polar coordinates centered at $x$)
		$$A:=\{(\rho,\theta):\rho\in A_{B_r(x)}(T(r,x))\}$$
		and using that $\mathcal H^1(A_{B_r(x)}(T(r,x)))=\frac{T(r,x)-r}{2}$\,, by \eqref{stuno}, we estimate
		\begin{align}\label{stdue}
			\int_{\partial B_{R(r,x)}(x)}|\nabla u_\ep|\ud\mathcal H^1&\leq \frac{2}{T(r,x)-r}\int_A|\nabla u_\ep|\ud x\leq \frac{2|A|^{\frac12}}{T(r,x)-r}\Big(\int_{A}|\nabla u_\ep|^2\ud x\Big)^{\frac12}\nonumber\\
			&\leq C\Big(\int_{A}|\nabla u_\ep|^2\ud x\Big)^{\frac12}\leq C|\log\ep|^{\frac12},
		\end{align}
		where $C$ is an absolute constant.  Since $R(r,x)<T(r,x)$, by definition of $T(r,x)$, we have
		\begin{align}\label{est_R}
			2\sum_{\substack{B\in \widehat{\Ccal}_\ep(k)\\B\subset B_{R(r,x)}(x)}} 
		r(B)\geq \mathcal H^1\big([r,R(r,x)]\setminus A_{B_r(x)}(R(r,x))\big)>\frac{R(r,x)-r}{2}.
		\end{align}
		Then, we set 
		\begin{equation*}
		\begin{aligned}
		\mino(k+1):=&\hatmino(k)\setminus\{B\in\hatmino(k)\,:\,B\subset B_{R(r,x)}(x)\}\,,\\
		\magg(k+1):=&\hatmagg(k)\,,\\
		\iteuno(k+1):=&\hatiteuno(k)\cup\{B_{R(r,x)}(x)\}\,,\\
		\itedue(k+1):=&\hatitedue(k)\,,\\
		\Ccal_\ep(k+1):=&\mino(k+1)\cup\magg(k+1)\,,\\
		\itezero(k+1):=&\iteuno(k+1)\cup \itedue(k+1)\,,\\
		\ite(k+1):=&\Ccal_\ep(k+1)\cup\itezero(k+1)\,.
		\end{aligned}
		\end{equation*}
		Notice that  the pair $(\Ccal_\ep(k+1);\itezero(k+1))$ satisfies \eqref{disjo}, by construction, \eqref{contenute}, by the inductive assumption, and \eqref{blaite}, by \eqref{stdue}.
		Finally, by {\eqref{hatdisjo}}, \eqref{hatcontenute}, \eqref{hatsommaraggiite}, and \eqref{est_R}, {using the very definition of $\iteuno(k+1)$\,,}
		we have that 
		\begin{equation*}
		\begin{aligned}
		\rad(\ite(k+1))=&\rad(\hatite(k))+r+R(r,x)-r-\sum_{\substack{B\in \widehat{\Ccal}_\ep(k)\\B\subset B_{R(r,x)}(x)}}r(B)\\
		\le&\rad(\Ccal_\ep(0))+4\sum_{\widetilde{B}\in\hatitezero(k){\cup\{B_r(x)\}}}\sum_{\substack{B\in \Ccal_\ep(0)\\B\subset\widetilde{B}}}r(B)+4\sum_{\substack{B\in \widehat{\Ccal}_\ep(k)\\B\subset B_{R(r,x)}(x)}}r(B)\\
		=&\rad(\Ccal_\ep(0))+4\sum_{\widetilde{B}\in\hatitezero(k)}\sum_{\substack{B\in \Ccal_\ep(0)\\B\subset\widetilde{B}}}r(B)+{ 4\sum_{\substack{B\in\Ccal_\ep(0)\\B\subset B_r(x)}}r(B)}+4\sum_{\substack{B\in \widehat{\Ccal}_\ep(k)\\B\subset B_{R(r,x)}(x)}}r(B)\,,\\
		\le&{\rad(\Ccal_\ep(0))+4\sum_{\widetilde{B}\in\hatitezero(k)}\sum_{\substack{B\in \Ccal_\ep(0)\\B\subset\widetilde{B}}}r(B)+4\sum_{\substack{B\in {\Ccal}_\ep(0)\\B\subset B_{R(r,x)}(x)}}r(B)}\\
		\le&\rad(\Ccal_\ep(0))+4\sum_{\widetilde{B}\in\itezero(k+1)}\sum_{\substack{B\in \Ccal_\ep(0)\\B\subset \widetilde B}}r(B)\,,
		\end{aligned}
		\end{equation*}
		which implies \eqref{sommaraggiite}.
		\vskip2mm

{\it Subcase 1b: $T(r,x)\ge\frac\ep 2$\,.} In this case, we define $R(r,x):=T(r,x)$ and we set
		\begin{equation*}
		\begin{aligned}
		\mino(k+1):=&\mino(k)\setminus\{B\in\mino(k)\,:\,B\subset B_{R(r,x)}(x)\}\,,\\
		\magg(k+1):=&\magg(k)\setminus\{B\in\magg(k)\,:\,B\subset B_{R(r,x)}(x)\}\,,\\
		\iteuno(k+1):=&\iteuno(k)\,,\\
		\itedue(k+1):=&\itedue(k)\cup\{B_{R(r,x)}(x)\}\,,\\
			\Ccal_\ep(k+1):=&\mino(k+1)\cup\magg(k+1)\,,\\
		\itezero(k+1):=&\iteuno(k+1)\cup \itedue(k+1)\,.
		\end{aligned}
		\end{equation*}
	{By arguing as in Subcase 1a also in this case we obtain that the pair $(\Ccal_\ep(k+1);\itezero(k+1))$ satisfies \eqref{disjo}, \eqref{contenute},  \eqref{sommaraggiite} and \eqref{blaite}.}
\medskip
		
	{\it Case 2: $T(r,x)> d(x)$\,.} In this case, we set {$$R(r,x):=\sup\{t\le d(x)\,:\,\partial B_t(x)\cap B=\varnothing\textrm{ for all }B\in\hatite(k)\}$$} and
		\begin{equation*}
		\widetilde{\ite}(k):=\widetilde{\Ccal}_\ep^{<}(k)\cup\widetilde{\Ccal}_\ep^{>}(k)\cup \widetilde{\mathscr{I}}_\ep^<(k)\cup \widetilde{\mathscr{I}}_\ep^>(k)\,,
		\end{equation*}
		where 
		\begin{equation}\label{instildepezzi}
		\begin{aligned}
		\widetilde{\Ccal}_\ep^{<}(k):=&\hatmino(k)\setminus\{B\in\hatmino(k)\,:\, B\subset B_{R(r,x)}(x)\}\,,\\
		\widetilde{\Ccal}_\ep^{>}(k):=&\hatmagg(k)\setminus\{B\in\hatmagg(k)\,:\, B\subset B_{R(r,x)}(x)\}\,,\\
		\widetilde{\mathscr{I}}_\ep^<(k):=&\hatiteuno(k)\,,\\
		\widetilde{\mathscr{I}}_\ep^>(k):=&\hatitedue(k)\,.
		\end{aligned}
		\end{equation}
{Notice that $\partial B_{R(r,x)}(x)$ does not  intersect other balls in $\widetilde{\ite}(k)$, but it might contains balls in $\widehat{\Ccal}_\ep(k)$ and it can be tangent to some ball in $\widetilde{\mathscr{I}}_\ep(k)$ (if it happens that $R(r,x)=d(x)$). Furthermore, we notice that, by the very definition of $R(r,x)$, $B_{R(r,x)}(x)$ is surely tangent to some ball in $\hatite(k)$\,. Then, we pass through a merging procedure (Definition \ref{merging_def}), including in a unique ball $B_{R'}(x')$ all the balls in $\widetilde{\ite}(k)$ 
	whose closures intersect $\partial B_{R(r,x)}(x)$ (and possibly others).  We emphasize that, by definition of $R(r,x)$, after the merging procedure $B_{R'}(x')$ must contain some other ball in $\widetilde \itezero(k)$.}
Then, we redefine the sets in \eqref{instildepezzi} by setting
\begin{equation*}
		\begin{aligned}
		\widetilde{\Ccal}_\ep^{<}(k):=&\hatmino(k)\setminus\{B\in\hatmino(k)\,:\, B\subset B_{R'}(x')\}\,,\\
		\widetilde{\Ccal}_\ep^{>}(k):=&\hatmagg(k)\setminus\{B\in\hatmagg(k)\,:\, B\subset B_{R'}(x')\}\,,\\
		\widetilde{\mathscr{I}}_\ep^<(k):=&\hatiteuno(k)\setminus\{B\in\hatiteuno(k)\,:\, B\subset B_{R'}(x')\}\,,\\
		\widetilde{\mathscr{I}}_\ep^>(k):=&\hatitedue(k)\setminus\{B\in\hatitedue(k)\,:\, B\subset B_{R'}(x')\}\,;
		\end{aligned}
		\end{equation*}
		moreover, we set 
		\begin{equation*}
\widetilde{\Ccal}_\ep(k):=\widetilde{\Ccal}_\ep^{<}(k)\cup \widetilde{\Ccal}_\ep^{>}(k)\,,\qquad \widetilde{\mathscr{I}}_\ep(k):=\widetilde{\mathscr{I}}_\ep^<(k)\cup \widetilde{\mathscr{I}}_\ep^>(k)\,,\qquad \widetilde{\mathscr{S}}_\ep(k):=\widetilde{\Ccal}_\ep(k)\cup\widetilde{\mathscr{I}}_\ep(k)\,.
\end{equation*}
		We notice that, by definition of $R(r,x)$ and by \eqref{est_R}, we have
		\begin{equation*}
		R(r,x)-r<
	4\sum_{\substack{B\in \widehat{\Ccal}_\ep(k)\\B\subset B_{R(r,x)}(x)}}r(B)\,,
		\end{equation*}
{whence, by the very definition of merging, we deduce
		\begin{equation}\label{nuovasommaraggiite}
	\begin{aligned}
	\rad(\widetilde{\mathscr{S}}_\ep(k))+R'
	\le&	\rad(\widetilde{\mathscr{S}}_\ep(k))+\sum_{\newatop{B\in\hatite(k)}{B\subset(B_{R'}(x')\setminus B_{R(r,x)}(x))}}r(B)+R(r,x)\\
\le&\rad(\widetilde{\mathscr{S}}_\ep(k))+\sum_{\newatop{B\in\hatite(k)}{B\subset(B_{R'}(x')\setminus B_{R(r,x)}(x))}}r(B)+4\sum_{\substack{B\in\widehat{\Ccal}_\ep(k)\\B\subset B_{R(r,x)}(x)}}r(B)+r\\
	\le&\rad(\hatite(k)){+r}+4\sum_{\substack{B\in\widehat{\Ccal}_\ep(k)\\B\subset B_{R(r,x)}(x)}}r(B)\\
	\le&\rad(\Ccal_\ep(0))+4\sum_{\widetilde B\in\hatitezero(k)\cup\{B_r(x)\}}\sum_{\newatop{B\in\Ccal_\ep(0)}{B\subset\widetilde{B}}}r(B)+4\sum_{\substack{B\in\widehat{\Ccal}_\ep(k)\\B\subset B_{R(r,x)}(x)}}r(B)
	\\
	\leq&\rad(\Ccal_\ep(0))+4\sum_{\widetilde B\in\widetilde\itezero(k)}\sum_{\newatop{B\in\Ccal_\ep(0)}{B\subset\widetilde{B}}}r(B)+4\sum_{\newatop{B\in\Ccal_\ep(0)}{B\subset B_r(x)}}r(B)\\
	&+4\sum_{\newatop{\widetilde B\in\widehat\itezero(k)}{\widetilde B\subset B_{R'}(x')}}\sum_{\newatop{B\in\Ccal_\ep(0)}{B\subset\widetilde{B}}}r(B)+4\sum_{\substack{B\in\widehat{\Ccal}_\ep(k)\\B\subset B_{R(r,x)}(x)}}r(B)\\
	\leq& \rad(\Ccal_\ep(0))+4\sum_{\widetilde B\in\widetilde\itezero(k)\cup\{B_{R'}(x')\}}\sum_{\newatop{B\in\Ccal_\ep(0)}{B\subset\widetilde{B}}}r(B)\,,
\end{aligned}
	\end{equation}
where we have used also \eqref{hatsommaraggiite} and \eqref{hatdisjo}.
Now, if $R'\ge\frac\ep 2$\,, we set
\begin{equation*}
	\begin{aligned}
		\mino(k+1)&:=\widetilde{\Ccal}_\ep^{<}(k),\\
		\magg(k+1)&:=\widetilde{\Ccal}_\ep^{>}(k),\\
		\iteuno(k+1)&:=\widetilde{\mathscr{I}}_\ep^<(k),\\
		\itedue(k+1)&:=
		\widetilde{\mathscr{I}}_\ep^>(k)\cup\{B_{R'}(x')\},\\
			\Ccal_\ep(k+1)&:=\mino(k+1)\cup\magg(k+1),\\
		\itezero(k+1)&:=\iteuno(k+1)\cup \itedue(k+1),
	\end{aligned}
\end{equation*}
and we conclude the iterative step. Notice that the family $$\ite(k+1)=\mino(k+1)\cup \magg(k+1)\cup \iteuno(k+1)\cup \itedue(k+1)$$ is made of pairwise disjoint balls, and  by \eqref{nuovasommaraggiite} it satisfies \eqref{hatdisjo}, \eqref{hatcontenute}, \eqref{hatsommaraggiite}, \eqref{hatblaite}. }

Otherwise, if $R'<\frac{\ep}{2}$\,, we redefine $B_r(x)$ by setting {$B_r(x):=B_{R'}(x')$\,, and relabelling
\begin{equation*}
		\begin{aligned}
		\hatmino(k):=\widetilde{\Ccal}_\ep^{<}(k),\qquad\qquad&\hatmagg(k):=\widetilde{\Ccal}_\ep^{>}(k)\\
		\hatiteuno(k):=\widetilde{\mathscr{I}}_\ep^<(k)\,,\qquad\qquad&\hatitedue(k):=
		\widetilde{\mathscr{I}}_\ep^>(k)\,,\\
		\widehat{\Ccal}_\ep(k):=	\hatmino(k)\cup\hatmagg(k),\qquad\qquad&\hatitezero(k):=\hatiteuno(k)\cup\hatitedue(k)\,,
		\end{aligned}
		\end{equation*}
	and $\hatite(k):=\widehat{\Ccal}_\ep(k)\cup\hatitezero(k)$\,; we check that (the new ball) $B_r(x)$ and $\hatite(k)$ satisfy \eqref{hatdisjo}, \eqref{hatcontenute}, \eqref{hatsommaraggiite}, \eqref{hatblaite}, and we restart the whole process by defining $T(r,x)$ as in \eqref{defTrx} starting from the new (just built) ball $B_r(x)$\,.
}

{If we fall again in Case 2 and the new radius $R'<\frac\ep2$, we iterate again the procedure. Notice that every time we fall in this situation, the number of balls in $\hatitezero(k)$ decreases (due to the merging procedure), and hence after a finite number $N_k>0$ of iterations we will end up in some other case. In other words, after a finite  number of steps (depending on $\ep$), such an iteration cannot be applied anymore and hence we necessarily fall either in case $R'\geq\frac\ep2$ or in Case 1, thus defining $\hatite(k+1)$ accordingly.
}
\medskip

		\textit{(Conclusion of the procedure)} We iterate the process described above inductively. Every time we perform this iteration the number of balls in $\mino$ decreases, and after a finite number $K_\ep$ of processes we end up with $\mino(K_\ep)=\varnothing$\,.  Finally, we set 
		\begin{equation}\label{conclusionformula}
		\iteuno:=\iteuno(K_\ep)\qquad\qquad\itedue:=\itedue(K_\ep)\cup\magg(K_\ep)\,,\qquad\qquad\itezero:=\iteuno\cup\itedue\,.
		\end{equation}
		By construction (see \eqref{disjo}-\eqref{blaite}), the balls in $\itezero$ are pairwise disjoint and satisfy \eqref{sommaraggi}-\eqref{bla}.
		Furthermore, we observe that
		\begin{equation}\label{quasi0}
			\int_{\partial B}|\nabla u_\ep|\ud\Huno\le C|\log\ep|^{\frac 1 2}\qquad\qquad\textrm{for every }B\in\iteuno\,,
		\end{equation}
		and, in view of \eqref{sommaraggi}, 
		\begin{equation}\label{semprestessa}
			\sharp\itedue\le C|\log\ep|\,.
	\end{equation}
	\medskip

	\textbf{Step 4: Estimate of $\|\nabla u_\ep\|_{L^1}$ on the boundary of the balls in {$\itedue$}\,.}
	For every $B=B_r(x)\in \itedue$\,, we set
	\begin{equation}\label{nuovo}
		R(r,\ep):=16\,\overline C\,|\log\ep|\,r\,,
	\end{equation}
	where $\overline C$ is the constant appearing in \eqref{sommaraggi}. Notice that 
	\begin{equation}\label{newRrep}
	{\sum_{B_r(x)\in\itedue}R(r,\ep)\leq 16\overline C|\log\ep|\rad(\itezero)\leq C\ep|\log\ep|^2}\,,
	\end{equation} 
	and hence, for $\ep$ small enough, we can assume that all the balls $B_{R(r,\ep)}(x)$ are contained in $\Om$.
	Let $\widetilde D^1,\ldots,\widetilde D^{M_\ep}$ denote the connected components of the set $\bigcup_{B_r(x)\in\itedue}B_{R(r,\ep)}(x)$\,.
	By \eqref{semprestessa} we get
	\begin{align*}
		M_\ep\leq C|\log\ep|\,.
	\end{align*}
	For every $m=1,\ldots,M_\ep$\,, let $\{B_{r^{m,k}}(x^{m,k})\}_{k=1,\ldots,K^m}\subseteq\itedue$ be such that $\widetilde D^m:=\bigcup_{k=1}^{K^m}B_{R^{m,k}_\ep}(x^{m,k})$\,, where $R^{m,k}_\ep:=R(r^{m,k},\ep)$ is defined by \eqref{nuovo}\,. Moreover, for every $m=1,\ldots,M_\ep$\,, we set
	$\widehat D^m:=\bigcup_{k=1}^{K^m}B_{r^{m,k}}(x^{m,k})$ and we define 
	%
	the function $f^m:\widetilde{D}^m\to [0,1]$ as 
	$$
	f^m(x):=\inf_{k=1,\ldots,K^m}\Big\{\frac{1}{(16\,\overline C\,|\log\ep|-1)r^{m,k}} \di(x, B_{r^{m,k}}(x^{m,k}))\Big\}\,; 
	$$
	we notice that $f^m$ is Lipschitz continuous, $f^m=0 $ on $\widehat D^m$, $f^m=1$ on $\partial \widetilde D^m$, and 
	\begin{equation}\label{stimagradfm}
		|\nabla f^m|\le \frac{1}{(16\,\overline C\,|\log\ep|-1)\frac{\ep}{2}}\,.
	\end{equation} 
	Finally, for every $m=1,\ldots, M_\ep$\,, let us define the sets
	\begin{equation*}
		\begin{aligned}
			T^m_1:=&\Big\{t\in [0,1]\,:\,\{f^m=t\}\cap\bigcup_{B\in\iteuno} B\neq \varnothing\Big\}\,,\\
			T^m_2:=&\Big\{t\in [0,1]\,:\,\{f^m=t\}\cap\bigcup_{B\in\iteuno} B=\varnothing\Big\}\,,
		\end{aligned}
	\end{equation*}
	and we show that 
	\begin{equation}\label{stimaT2}
		\Huno(T^m_2)\ge \frac 1 2\,.
	\end{equation}
	Indeed, in view of \eqref{stimagradfm}) for every $B\in\iteuno$, 
	\begin{equation*}
		\begin{aligned}
			\Huno\Big(\left\{t\in [0,1]\,:\,\{f^m=t\}\cap B\neq\varnothing\right\}\Big)\leq&
			\mathrm{osc}_B(f^m)\le 2r(B)|\nabla f^m|\\
			\le&2r(B)\frac{1}{(16\,\overline C\,|\log\ep|-1)\frac{\ep}{2}}\le \frac{r(B)}{2\overline C\ep|\log\ep|}\,,
		\end{aligned}
	\end{equation*}
	which, together with \eqref{sommaraggi}, implies
	\begin{equation*}
		\Huno(T^m_1)\le \sum_{B\in\iteuno}\Huno\Big(\left\{t\in [0,1]\,:\,\{f^m=t\}\cap B\neq\varnothing\right\}\Big)\le\frac{\rad(\iteuno)}{2\overline C\ep|\log\ep|}\le\frac 1 2\,,
	\end{equation*}
	whence \eqref{stimaT2} follows.
	For every $k=1,\ldots,K^m$ we set 
	$$
	E^m_{r^{m,k}}:=\Big\{x\in\widetilde D^m\,:\,|\nabla f^m(x)|=\frac{1}{(16\,\overline C\,|\log\ep|-1)r^{m,k}}\Big\}\,;
	$$
	 it is easy to check that 
	 \begin{equation*}
		E^m_{r^{m,k}}\subset  B_{16\,\overline C\,|\log\ep|\, r^{m,k}}(x^{m,k})\setminus \overline{B}_{r^{m,k}}(x^{m,k})\subset \widetilde D^m\setminus \widehat D^m\,.
	\end{equation*} 
	
	Since $f^m$ is Lipschitz continuous, by the coarea formula and by the mean value theorem, there exists $t^m\in T^m_2$ such that
	\begin{equation}\label{quasi}
		\begin{aligned}
			&\int_{\{f^m(x)=t^m\}}|\nabla u_\ep|\ud\Huno\le \frac{1}{\Huno(T^m_2)}\int_{T^m_2}\ud t\int_{\{f^m(x)=t\}}|\nabla u_\ep|\ud\Huno\\
			\le&2\int_{(\widetilde{D}^m\setminus\widehat D^m)\setminus\bigcup_{B\in\iteuno}B}|\nabla u_\ep||\nabla f^m|\ud x
			=\sum_{k=1}^{K^m}\frac{1}{(16\,\overline C\,|\log\ep|-1)r^{m,k}}\int_{E^{m}_{r^{m,k}}}|\nabla u_\ep|\ud x\\
			\le&K^m\|\nabla u_\ep\|_{L^2(\Omega;\R^{2\times 2})}\le C|\log\ep|^{\frac 3 2}\,,
		\end{aligned}
	\end{equation}
	where the last but one inequality follows by H\"older inequality and in last passage we have used \eqref{semprestessa} and \eqref{enbound}\,.
	For every $m=1,\ldots,M_\ep$\,, we set
	\begin{equation*}
		D^m:=\{x\in \widetilde D^m\,:\,f^m(x)\le t^m\}\,,
	\end{equation*}
so that $\widehat D^m\subset D^m$;
	by \eqref{quasi} we have that
	\begin{equation}\label{quasi2}
		\int_{\partial D^m}|\nabla u_\ep|\ud\Huno\le C|\log\ep|^{\frac 3 2}\,. 
	\end{equation}
	Moreover, we notice that, {in view of \eqref{newRrep}\,,}
	\begin{equation}\label{sommadiametri}
		\sum_{m=1}^{M_\ep}\mathrm{diam}(D^m)\le C\ep|\log\ep|^2\,.
	\end{equation}
We remark that 
\begin{equation}\label{alphak}
	D^m:=\bigcup_{k=1}^{K^m}B_{\alpha^{m,k}r^{m,k}}(x^{m,k})\,,\quad\textrm{for some }\alpha^{m,k}\in(1,16\overline C|\log\ep|)\textrm{ (for all $k=1,\dots,K^m$)}\,;
\end{equation} 
by \eqref{newRrep},
\begin{equation}\label{sommaalfakappa}
\sum_{k=1}^{K^m}\alpha^{m,k}r^{m,k}\le C\ep|\log\ep|^2\,.
\end{equation}
Eventually, observe that, thanks to the choice of $t^m\in T^m_2$, the boundary $\partial D^m$ does not intersect other balls in $\itezero$\,. However, $D^m$ might contain some other ball in $\iteuno$\,.
	\medskip

	\textbf{Step 5: Definition of $\widehat\mu_\ep$\,.}
	For every $\ep>0$\,, we define $\widehat\mu_\ep$ as 
	\begin{equation*}
		\widehat\mu_\ep:=\sum_{m=1}^{M_\ep}\deg(u_\ep,\partial D^m)\delta_{\widehat x^m}\,,
	\end{equation*}
	where, for every $m=1,\ldots,M_\ep$\,, $\widehat x^m$ is the center of an arbitrarily chosen ball among the balls $\{B_{r^{m,k}}(x^{m,k})\}_{k=1,\ldots,K^m}$ contained in $D^m$\,.
	Notice that $\deg(u_\ep,\partial D^m)=\sum_{\newatop{B\in\Ccal_\ep}{B\subset D^m}}\deg(u_\ep,\partial B)$\,.
	 As a consequence, it easily follows by the very definition of $\widetilde\mu_\ep$ in \eqref{tilde2}, that 
	\begin{align}\label{stima_mucap}
	|\widehat \mu_\ep|(\Om)\leq |\widetilde\mu_\ep|(\Om)\leq C|\log\ep|\,.
	\end{align}
	By \eqref{sommadiametri} and Theorem \ref{alipons}(i),  we get
	\begin{equation*}
		\begin{aligned}
			\|\widetilde{\mu}_\ep-\widehat{\mu}_\ep\|_{\flt}\le&\sup_{\|\ffi\|_{\Cc^{0,1}(\Omega)}\le 1}\sum_{m=1}^{ M_\ep}\Big|\int_{D^m}\ffi\ud(\widetilde{\mu}_\ep-\widehat{\mu}_\ep)\Big|
			\le2\sup_{\|\ffi\|_{\Cc^{0,1}(\Omega)}\le 1}\sum_{m=1}^{{M}_\ep}|\widetilde\mu_\ep|(B)\mathrm{osc}_{D^m}(\ffi)\\
			\le& C\ep|\log\ep|^3\,,
		\end{aligned}
	\end{equation*}
and hence $\|\widetilde{\mu}_\ep-\widehat{\mu}_\ep\|_{\flt}\to0$ as $\ep\rightarrow0$\,. Thus,  in order to get \eqref{flatcomp}, we are left to prove that
%
		\begin{equation}\label{20220424_7}
			\|Ju_\ep-\pi\widehat\mu_\ep\|_{\flt,\Omega}\to 0\qquad\textrm{as }\ep\to 0\,.
		\end{equation}
		

\medskip

{\textbf{Step 6: Definition of $\lambda_\ep$ and $\widehat{\lambda}_\ep$\,.}}
	In order to prove \eqref{20220424_7} we introduce the measures $\lambda_\ep$ and $\widehat{\lambda}_\ep$ such that the following identities
	\begin{equation}\label{correnti}
-\mathrm{Div}\,\lambda_\ep=Ju_\ep\qquad\textrm{and}\qquad-\mathrm{Div}\,\widehat\lambda_\ep={\pi}\widehat\mu_\ep\,,
	\end{equation}
hold in the sense of distributions in $\mathcal D'(\Om)$.
	We set 
	\begin{equation*}
	\lambda^1_\ep:=\frac12([u_\ep^1\mad_1u_\ep^{2}]-[u_\ep^{2}\mad_1u_\ep^{1}])\,,\qquad \lambda^2_\ep:=\frac12([u_\ep^1\mad_2u_\ep^{2}]-[u_\ep^{2}\mad_2u_\ep^{1}])\,,\qquad
	\lambda_\ep:=(-\lambda^2_\ep,\lambda_\ep^1)\,.
	\end{equation*}
By definition of weak Jacobian determinant (see Definition \ref{def_2x2minor}, and \eqref{def_dTu_2dim}), $\lambda_\ep$ satisfies the first equation in \eqref{correnti}.
	Moreover, by \eqref{quasi0} and \eqref{quasi2},
	\begin{equation}\label{quasiquasi}
		\begin{aligned}
			\int_{\partial B}|\lambda_\ep|\ud \mathcal H^1\le C|\log\ep|^{\frac 1 2}&\qquad\textrm{for every }B\in\iteuno\\
			\int_{\partial D^m}|\lambda_\ep|\ud \mathcal H^1\le C|\log\ep|^{\frac 3 2}&\qquad\textrm{for every }m=1,\ldots,M_\ep\,.
		\end{aligned}
	\end{equation}
	Furthermore, by H\"older inequality,  \eqref{enbound}, and \eqref{boundedjump} (for the estimate of $|\mad^su_\ep|$), we get
		\begin{equation}\label{20220424_5}
			\begin{aligned}
				|\lambda_\ep|\bigg(\bigcup_{B\in\iteuno}B\cup\bigcup_{m=1}^{M_\ep}D^m\bigg)\le&|\mad u_\ep|\Big(\bigcup_{B\in\iteuno}B\cup\bigcup_{m=1}^{M_\ep}D^m\Big) \\
				\le& \|\nabla u_\ep\|_{L^2(\Omega;\R^{2\times 2})}\bigg(\sum_{B\in\iteuno}|B|+\sum_{m=1}^{M_\ep}|D^m|\bigg)^{\frac 1 2}+C\ep|\log\ep|\\		
				\le& C\ep|\log\ep|^{{\frac 5 2}}\,,
			\end{aligned}
		\end{equation}
	where we have used also \eqref{sommaraggi} and \eqref{sommadiametri} to deduce that
	 \begin{equation}\label{stimaaree}
	 \sum_{B\in{\iteuno}}|B|\le C\ep^2|\log\ep|^2\qquad\textrm{and}\qquad\sum_{m=1}^{M_\ep}|D^m|\leq C\ep^2|\log\ep|^4\,.
	 \end{equation}
	Now, for every $m=1,\dots,M_\ep$, we define $v^m_\ep\in W^{1,1}(\Om;\Ss^1)$
	as
		$$v^m_\ep(x):=e^{\imath\deg(u_\ep,\partial D^m)\vartheta(x-\widehat x^m)},$$
	where $\R^2$ is identified with $\C$ and
	$\vartheta$ is the angular polar coordinate defined by 
	\begin{equation}\label{deftheta0}
		\vartheta(x):=\left\{\begin{array}{ll}
			\arctan\frac{x_2}{x_1}&\textrm{if }x_1>0\\
			\frac{\pi}{2}&\textrm{if }x_1=0 \textrm{ and }x_2>0\\
			\pi+\arctan\frac{x_2}{x_1}&\textrm{if }x_1<0\\
			\frac{3}{2}\pi&\textrm{if }x_1=0  \textrm{ and }x_2<0\,.
		\end{array}
		\right.
	\end{equation}
	We set $$v_\ep(\cdot):=\Pi_{m=1}^{M_\ep}v_\ep^m(\cdot)\equiv e^{\imath\sum_{m=1}^{M_\ep}\deg(u_\ep,\partial D^m)\vartheta(\cdot-\widehat x^m)}$$ and we define
\begin{equation*}
\widehat \lambda^1_\ep:=\frac12(v_\ep^1\nabla_1v_\ep^{2}-v_\ep^{2}\nabla_1v_\ep^{1})\,,\qquad \widehat\lambda^2_\ep:=\frac12(v_\ep^1\nabla_2v_\ep^{2}-v_\ep^{2}\nabla_2v_\ep^{1})\,,\qquad
	\widehat \lambda_\ep:=(-\widehat \lambda_\ep^2,\widehat \lambda_\ep^1)\,.
\end{equation*}
	{We notice that $v_\ep\in W^{1,p}(\Omega;\Ss^1)$ for any $1\le p<2$\,.}
	It is well-known that, for every $m=1,\ldots,M_\ep$\,, the distributional determinant of $v^m_\ep$ coincides with ${\pi}\deg(u_\ep,\partial D^m)\delta_{\widehat x^m}={\pi}\widehat \mu_\ep\res D^m$\,, so that also the second equation in \eqref{correnti} holds true.
	Finally, we show that $\widehat{\lambda}_\ep$ satisfies estimates similar to \eqref{quasiquasi} and \eqref{20220424_5}.
	
	Let $m=1,\ldots,M_\ep$\,. Recalling \eqref{alphak}, we can assume without loss of generality that $\widehat x^m=x^{m,1}$\,.
	Then, by \eqref{alphak}, \eqref{sommaalfakappa}, \eqref{sommadiametri} and \eqref{stima_mucap}, we have
			\begin{align}\label{stima_lambdahat}
		\int_{\partial D^m}|\widehat\lambda_\ep|\ud\Huno\leq& \int_{\partial D^m} |\nabla v_\ep|\ud \Huno\leq \sum_{j=1}^{M_\ep}|\deg(u_\ep,\partial D^j)|\int_{\partial D^m}|\nabla \vartheta (\cdot-\widehat x^j)|\ud\Huno\nonumber\\
		\le&|\deg(u_\ep,\partial D^m)|\int_{\partial D^m}|\nabla \vartheta (\cdot-\widehat x^m)|\ud\Huno\nonumber\\
		&+\sum_{\newatop{j=1}{j\neq m}}^{M_\ep}|\deg(u_\ep,\partial D^j)|\int_{\partial D^m}|\nabla \vartheta (\cdot-\widehat x^j)|\ud\Huno\nonumber\\
		\le&|\deg(u_\ep,\partial D^m)|\int_{\partial B_{\alpha^{m,1}r^{m,1}}(x^{m,1})}|\nabla \vartheta (\cdot-x^{m,1})|\ud\Huno\nonumber\\
		&+|\deg(u_\ep,\partial D^m)|\sum_{k=2}^{K^m}\int_{\partial B_{\alpha^{m,k}r^{m,k}}(x^{m,k})\setminus B_{\alpha^{m,1}r^{m,1}}(x^{m,1})}|\nabla \vartheta (\cdot-x^{m,1})|\ud\Huno\nonumber
		\\
		&+\sum_{\newatop{j=1}{j\neq m}}^{M_\ep}|\deg(u_\ep,\partial D^j)|\frac{\Huno(\partial D^j)}{\ep}\nonumber\\
		\le&2\pi|\deg(u_\ep,\partial D^m)|+2\pi|\deg(u_\ep,\partial D^m)|\sum_{k=1}^{K^m}\frac{\alpha^{m,k}r^{m,k}}{\ep}\nonumber\\
		&+C\sum_{\newatop{j=1}{j\neq m}}^{M_\ep}|\deg(u_\ep,\partial D^j)|\frac{\mathrm{diam}(D^j)}{\ep}
		\nonumber\\
\le&C|\widehat\mu_\ep|(\Omega)+C|\widehat\mu_\ep|(\Omega)|\log\ep|^2\le C|\log\ep|^3\,.
		\end{align}
{Analogously, recalling that the balls in $\itezero$ are pairwise disjoint and that the balls in $\itedue$ have radius larger than $\frac \ep 2$\,, for every $B\in\iteuno$ we have
\begin{equation}\label{stima_lambdahat2}
\begin{aligned}
	\int_{\partial B}|\widehat{\lambda}_\ep|\ud \Huno\le&
	\int_{\partial B} |\nabla v_\ep|\ud \Huno\leq \sum_{j=1}^{M_\ep}|\deg(u_\ep,\partial D^j)|\int_{\partial B}|\nabla \vartheta (\cdot-\widehat x^j)|\ud\Huno\\
	\le&2\sum_{j=1}^{M_\ep}|\deg(u_\ep,\partial D^j)|\frac{\Huno(\partial B)}{\ep}\le C|\log\ep|\,,
	\end{aligned}
	\end{equation}
	where the last inequality follows by \eqref{sommaraggi} and \eqref{stima_mucap}.
	}

	Finally, we prove that
	\begin{equation}\label{20220424_6}
	|\widehat{\lambda}_\ep|\bigg(\bigcup_{B\in\iteuno}B\cup\bigcup_{m=1}^{M_\ep}D^m\bigg)\le C\ep^{\frac 2 3}|\log\ep|^{\frac 7 3}\,.
	\end{equation}
	By definition of $v_\ep$ we easily get, for some fixed $p\in (1,2)$\,, and thanks to \eqref{stima_mucap},
	\begin{equation*}
	\begin{aligned}
\|\nabla v_\ep\|_{L^p( \Om;\R^{2\times 2})}=&\Big\|\sum_{m=1}^{M_\ep}\deg(u_\ep,\partial D^m)\nabla\vartheta(\cdot-\widehat x^m) \Big\|_{L^p(\Om;\R^{2})}\\
\leq& C\sum_{m=1}^{M_\ep}|\deg(u_\ep,\partial D^m)|=C|\widehat \mu_\ep|(\Om)\leq C|\log\ep|\,.
	\end{aligned}
	\end{equation*}
	Hence,
	{ by H\"older inequality and \eqref{stimaaree}, we deduce
			\begin{equation*}
				\begin{aligned}
				|\widehat{\lambda}_\ep|\bigg(\bigcup_{B\in\iteuno}B\cup\bigcup_{m=1}^{M_\ep}D^m\bigg)\le& \int_{\bigcup_{B\in\iteuno}B\cup\bigcup_{m=1}^{M_\ep}D^m}|\nabla v_\ep|\ud x\\
				\le& \|\nabla v_\ep\|_{L^p(\Omega;\R^{2\times 2})}\bigg(\sum_{B\in\iteuno}|B|+\sum_{m=1}^{M_\ep}|D^m|\bigg)^{\frac{p-1}{p}}\\
				\le& C\ep^{\frac{2p-2}{p}}|\log\ep|^{\frac{5p-4}{p}} \,,
				\end{aligned}
			\end{equation*}
			whence \eqref{20220424_6} follows by taking $p=\frac 3 2$\,.
			}

	\medskip
	
	\textbf{Step 7: Final estimate.} 
	Let $\ffi\in\Cc^1(\Omega)$ with $\|\ffi\|_{C^{0,1}(\Omega)}\le 1$\,. 
		By definition of $Ju_\ep=\partial T_{u_\ep}$ we have 
		\begin{equation}\label{peru}
			\begin{aligned}
				\langle Ju_\ep,\ffi\rangle_{\Omega}=&\int_{\Omega}\nabla\ffi\cdot\ud\lambda_\ep\\
				=&\sum_{B\in\iteuno}\int_{B}\nabla\ffi\cdot\ud\lambda_\ep+\sum_{m=1}^{M_\ep}\int_{D^m\setminus \bigcup_{B\in\iteuno}B}\nabla\ffi\cdot\ud\lambda_\ep\\		
				&+\int_{\Omega\setminus(\bigcup_{m=1}^{M_\ep}D^m\cup\bigcup_{B\in\iteuno}B)}\nabla\ffi\cdot\ud\lambda_\ep\\
			\end{aligned}
		\end{equation}
		and, analogously, by integrating by parts
		\begin{equation}\label{perv}
			\begin{aligned}
				\langle{\pi} \widehat\mu_\ep,\ffi\rangle_{\Omega}=&\int_{\Omega}\nabla\ffi\cdot\ud\widehat\lambda_\ep\\
				=&\sum_{B\in\iteuno}\int_{B}\nabla\ffi\cdot\ud\widehat\lambda_\ep+\sum_{m=1}^{M_\ep}\int_{D^m\setminus \bigcup_{{B\in\iteuno}}B}\nabla\ffi\cdot\ud\widehat\lambda_\ep\\
				&+\int_{\Omega\setminus(\bigcup_{m=1}^{M_\ep}D^m\cup\bigcup_{B\in\iteuno}B)}\nabla\ffi\cdot\ud\widehat\lambda_\ep\,.
			\end{aligned}
		\end{equation}
		Using that $u_\ep, v_\ep\in H^1(\Omega\setminus(\bigcup_{m=1}^{M_\ep}D^m\cup\bigcup_{B\in\iteuno}B);\Ss^1)$\,, and hence $Ju_\ep=Jv_\ep=0$ in $\Omega\setminus(\bigcup_{m=1}^{M_\ep}D^m\cup\bigcup_{B\in\iteuno}B)$\,,
		we can integrate by parts the last integrals in \eqref{peru} and \eqref{perv}, thus obtaining
		\begin{equation}\label{20220424_1}
			\begin{aligned}
				\Big|\langle Ju_\ep-{\pi}\widehat\mu_\ep,\ffi\rangle_{\Omega}\Big|\le&|\lambda_\ep|\big(\bigcup_{B\in\iteuno}B\cup\bigcup_{m=1}^{M_\ep}D^m\big)+|\widehat\lambda_\ep|\big(\bigcup_{B\in\iteuno}B\cup\bigcup_{m=1}^{M_\ep}D^m\big)\\
				&{+\Big|\sum_{m=1}^{M_\ep}\int_{\partial D^m}\ffi\,(\lambda_\ep-\widehat\lambda_\ep)\cdot\nu\ud\Huno\Big|}\\
				&{+\Bigg|\sum_{\newatop{B\in\iteuno}{B\cap\bigcup_{m=1}^{M_\ep}D^m=\varnothing}}\int_{\partial B}\ffi\,(\lambda_\ep-\widehat\lambda_\ep)\cdot\nu\ud\Huno\Bigg|}\\
				\le& C\ep|\log\ep|^{{\frac 5 2}}+C\ep^{\frac 2 3}|\log\ep|^{\frac 7 3}\\
				&{+\Big|\sum_{m=1}^{M_\ep}\int_{\partial D^m}\ffi\,(\lambda_\ep-\widehat\lambda_\ep)\cdot\nu\ud\Huno\Big|}\\
				&{+\Bigg|\sum_{\newatop{B\in\iteuno}{B\cap\bigcup_{m=1}^{M_\ep}D^m=\varnothing}}\int_{\partial B}\ffi\,(\lambda_\ep-\widehat\lambda_\ep)\cdot\nu\ud\Huno\Bigg|}\,,
			\end{aligned}
		\end{equation}
		where in the last inequality we have used \eqref{20220424_5} and \eqref{20220424_6}.
		
		We now estimate the remaining integrals on the right-hand side of \eqref{20220424_1}.
			By \eqref{quasiquasi} and \eqref{stima_lambdahat}, using that $\deg(u_\ep,\partial D^m)=\deg(v_\ep,\partial D^m)$, for every $m=1,\ldots,M_\ep$, we have that 
		\begin{equation*}
			\begin{aligned}
				\Big|\int_{\partial D^m}\ffi(\lambda_\ep-\widehat\lambda_\ep)\cdot\nu\ud\Huno\Big|\le&\mathrm{osc}_{D^m}(\ffi)\Big(\int_{\partial D^m}|\lambda_\ep|\ud \mathcal H^1+\int_{\partial D^m}|\widehat\lambda_\ep|\ud\mathcal H^1\Big)\\
				\le&\mathrm{diam}(D^m)C|\log\ep|^{3}\,,
			\end{aligned}
		\end{equation*}
		which, summing over $m$\,, and using \eqref{sommadiametri}, yields
		\begin{equation}\label{20220424_3}
			\sum_{m=1}^{M_\ep}\Big|\int_{\partial D^m}\ffi(\lambda_\ep-\widehat\lambda_\ep)\cdot\nu\ud\Huno\Big|\le C\ep|\log\ep|^{5}\,.
		\end{equation}
		Analogously, using \eqref{quasiquasi} and \eqref{stima_lambdahat2} together with \eqref{sommaraggi}, we obtain
		\begin{equation}\label{20220424_4}
			\sum_{B\in\iteuno}\Big|\int_{\partial B}\ffi(\lambda_\ep-\widehat\lambda_\ep)\cdot\nu\ud\Huno\Big|\le C\ep|\log\ep|^{3}\,.
		\end{equation}
		Therefore, by \eqref{20220424_1}, \eqref{20220424_3}, and \eqref{20220424_4}, for $\ep$ small enough, we get
		\begin{equation*}
			\Big|\langle Ju_\ep-{\pi}\widehat\mu_\ep,\ffi\rangle_{\Omega}\Big|\le C\ep|\log\ep|^{5}\,,
		\end{equation*}
		for some constant $C$ independent of $\ffi$\,.
	Taking the supremum on $\varphi\in \Cc^{1}(\Omega)$ such that $\|\varphi\|_{C^{0,1}}\leq 1$ we infer that $	\|Ju_\ep-{\pi}\widehat\mu_\ep\|_{\flt,\Omega}\leq C\ep|\log\ep|^5$\,, whence
	\eqref{20220424_7} follows. This concludes the proof of (i).

	\vskip3mm
	{\it Proof of (ii).} We can assume without loss of generality that $\{u_\ep\}_{\ep}$ satisfies \eqref{enbound} for some $C>0$\,.
	By \eqref{nuovobound1} and \eqref{sumradii}, applying Theorem \ref{alipons}(ii)\,, we get that
	\begin{equation*}
		\liminf_{\ep\to 0}\frac{\F_\ep(u_\ep)}{|\log\ep|}\ge\liminf_{\ep\to 0}\frac{F(\B_\ep,\mu_\ep,\Omega)}{|\log\rad(\B_\ep)|}\ge \pi|\mu|(\Omega)\,,
	\end{equation*}
	i.e., the claim.
	\vskip3mm
	{\it Proof of (iii).} 
	We divide the proof into three steps. In the first one we construct the sequence $\{u_\ep\}_\ep$ and we show that it satisfies \eqref{eq:limsup}; whereas the second and third steps are devoted to show that $\|Ju_\ep-\pi\mu\|_{\flt,\Omega}\to 0$ (as $\ep\to 0$).
	
	\medskip
	\textbf{Step 1 (Definition of $u_\ep$ and energy estimates):}
	Let $\mu=\sum_{i=1}^{I}z^i\delta_{x^i}$ with $z^i\in\Z\setminus\{0\}$ and $x^i\in\Omega$ for every $i=1,\ldots,I$\,. By standard density arguments in $\Gamma$-convergence, we can assume that $|z^i|=1$ for every $i=1,\ldots,I$\,.
	
	Let $R>0$ be such that the balls $B_{2R}(x^i)$ are pairwise (essentially) disjoint and contained in $\Omega$\,. For every $\rho>0$ we set $\Omega_{\rho}(\mu):=\Omega\setminus\bigcup_{i=1}^I\overline B_{\rho}(x^i)$ and we define $\Theta:\Omega_{R}(\mu)\to \R$ as $\Theta(\cdot):=\sum_{i=1}^{I}z^i\vartheta(\cdot-x^i)$\,,
	where $\vartheta$ is the function defined in \eqref{deftheta0}.

	
	For every $0<\rho_1<\rho_2$ and for every $x\in\R^2$ we set $A_{\rho_1,\rho_2}(x):=B_{\rho_2}(x)\setminus \overline B_{\rho_1}(x)$\,.
	Let moreover $\sigma_R\in C^\infty(A_{R,2R}(0);[0,1])$ be such that $\sigma_{R}\equiv 1$ in $A_{\frac 7 4R,2R}(0)$ and $\sigma_R\equiv 0$ in $A_{R,\frac 5 4 R}(0)$\,. Analogously, for every $\ep>0$\,, let $\sigma_\ep\in C^\infty(B_\ep(0);[0,1])$ be such that $\sigma_\ep\equiv 0$ in $B_{\frac \ep 4}(0)$ and $\sigma_\ep\equiv 1$ in $A_{\frac 3 4\ep,\ep}(0)$\,. {We can also assume that
	\begin{equation}\label{stimacutoff}
	|\nabla\sigma_\ep(x)|\le \frac{C}{\ep}\quad\textrm{for every  }x\in B_\ep(0)\,,\qquad|\nabla\sigma_R(x)|\le \frac{C}{R}\quad\textrm{for every  }x\in A_{R,\frac 5 4 R}(0)\,,
	\end{equation} 
	for some constant $C>0$ independent of $\ep$ (and of $x$)\,.
	} 
	
	For $\ep<R$ we define the function $\vartheta_\ep:\Omega\to\R$ as
	\begin{equation*}
		\vartheta_\ep(x):=\left\{\begin{array}{ll}
			\sigma_\ep(x-x^i)z^i\vartheta(x-x^i) &\textrm{if }x\in B_\ep(x^i)\setminus\{x^i\}\textrm{ for some }i\\
			z^i\vartheta(x-x^i)&\textrm{if }x\in A_{\ep,R}(x^i)\textrm{ for some }i\\
			(1-\sigma_R(x-x^i))z^i\vartheta(x-x^i)+\sigma_{R}(x-x^i)\Theta(x)&\textrm{if }x\in A_{R,2R}(x^i)\textrm{ for some }i\\
			\Theta(x)&\textrm{if } x\in\Omega_{2R}(\mu)
		\end{array}
		\right.
	\end{equation*}
	and we define $u_\ep:\Omega\to \Ss^1$ as 
	\begin{equation}\label{recovery}
	u_\ep(\cdot):=e^{\imath\vartheta_\ep(\cdot)}\,.
	\end{equation}
	Then, $S_{u_\ep}=\bigcup_{i=1}^I S^i_\ep$ where
	\begin{equation}\label{saltini}
		S_\ep^i:=\Big\{(x^i_1,x_2)\,:\,x^i_2-\frac 3 4\ep<x_2<x_2^i-\frac \ep 4\Big\}\qquad\textrm{ for every }i=1,\ldots,I\,,
	\end{equation}
	and hence 
	\begin{equation}\label{uno}
		\Huno(\overline S_{u_\ep})=I\frac \ep 2=|\mu|(\Omega)\frac \ep 2\,.
	\end{equation}
In particular, the energy term 
\begin{equation}\label{unopoi}
\frac{\mathcal H^1(\overline S_{u_\ep})}{\ep|\log\ep|}\rightarrow0\qquad \text{as } \ep\rightarrow0^+\,.
\end{equation}
 Let us analyse the stored elastic energy.
	It is easy to see that
	\begin{equation}\label{unico}
		\frac{1}{2}\int_{\Omega_{2R}(\mu)}|\nabla u_\ep|^2\ud x{\le}\frac{1}{2}\int_{\Omega_{2R}(\mu)}|\nabla \Theta|^2\ud x\le C(I)\log\frac{\mathrm{diam}(\Omega)}{2R}\,,
	\end{equation}
where $C(I)>0$ is a constant depending only on $I$\,.
Similarly, by the second estimate in \eqref{stimacutoff}, we deduce 
\begin{equation}\label{unicodue}
\begin{aligned}
\frac 1 2\int_{A_{R,2R}(x^i)}|\nabla u_\ep|^2\ud x\le&\int_{A_{R,2R}(x^i)}|\nabla\vartheta(x-x^i)|^2\ud x\\
		&+\int_{A_{R,2R}(x^i)}\bigg|\nabla\bigg(\sigma_R(x-x^i)\Big(\sum_{{j\neq i}}z^j\vartheta(x-x^j)\Big)\bigg)\bigg|^2\ud x\\
		\le& C(I,R)\,,
\end{aligned}
\end{equation}
where we have estimated 
\begin{align*}
	&\int_{A_{R,2R}(x^i)}\bigg|\nabla\bigg(\sigma_R(x-x^i)\Big(\sum_{{j\neq i}}z^j\vartheta(x-x^j)\Big)\bigg)\bigg|^2\ud x\\
	&\qquad\leq C(I)+\int_{A_{R,2R}(x^i)}\big|\sum_{{j\neq i}}z^j\nabla\vartheta(x-x^j)\big|^2\ud x\leq C(I,R).
\end{align*}
Furthermore, {by the first estimate in \eqref{stimacutoff}, for every $i=1,\ldots,I$, and for every $x\in B_{R}(x^i)$ it holds
{\begin{align}\label{unicotre}
|\nabla u_\ep(x)|=|\nabla\vartheta_\ep(x)|&\le2\pi \Big|\nabla\sigma_\ep(x-x^i)\chi_{B_\ep(x^i)}(x)\Big|+\Big|\nabla\vartheta(x-x^i)\chi_{A_{\frac\ep 4,R}(x^i)}(x)\Big|
\nonumber\\
&\le\frac{C}{\ep}\chi_{B_\ep(x^i)}(x)+\frac{1}{|x-x^i|}\chi_{A_{\frac\ep 4,R}(x^i)}(x)\,,
\end{align}}
for some constant $C$ independent of $\ep$ (and of $x$).
}
It follows that, for every $i=1,\ldots,I$,
	\begin{eqnarray}\label{stimadelimperatore}
		\frac 1 2\int_{B_\ep(x^i)}|\nabla u_\ep|^2\ud x&
		\le& C\,,\\
		\frac 1 2\int_{A_{\ep,R}(x^i)}|\nabla u_\ep|^2\ud x&\leq&\pi\log\frac{R}{\ep}\,,\nonumber
	\end{eqnarray}
	which, together with \eqref{unico} and \eqref{unicodue}, implies that
	\begin{equation*}\label{due}
		\limsup_{\ep\to 0}\frac{1}{2|\log\ep|}\int_{\Omega}|\nabla u_\ep|^2\ud x\leq\pi I=\pi|\mu|(\Omega)\,,
	\end{equation*}
	which, together with \eqref{unopoi}, yields \eqref{eq:limsup}.
	
Now, in order to conclude the proof of (iii) of Theorem \ref{mainthm}, it remains to prove that
\begin{align}\label{conclusionflat}
	\|Ju_\ep-\pi\mu\|_{\textrm{flat},\Om}\rightarrow0\qquad \text{as }\ep\to0.
	\end{align}
To this purpose in view of Remark \ref{flat_rmk}, it is enough to prove the following two facts: There exists a constant $C>0$ (independent of $\ep$) such that
\begin{equation}\label{quasitre}
	|Ju_\ep|(\Omega)\le C\,,
\end{equation}
and 
\begin{align}\label{strict_conclusion}
u_\ep\strictly \bar u\qquad\text{ in }BV(\Om;\R^2)\,,
\end{align}
for some $\bar u\in W^{1,1}(\Om;\Ss^1)$ with $\textrm{Det}(\nabla \bar u)=\pi\mu$.
%
%
%
%
%
\medskip

\textbf{Step 2: Proof of \eqref{quasitre}.}
In order to show \eqref{quasitre}, thanks to Remark \ref{mucci_rmk}, it is sufficient to show that for all $\ep$ small enough there exists a sequence of maps $\{v_{\ep,k}\}_{k\in\N}\subset C^1(\Om;\R^2)\cap BV(\Omega;\R^2)$ such that 
\begin{equation}\label{quasitre2}
\sup_{k\in\N}|Jv_{\ep,k}|(\Om)\le C\,,
\end{equation}
for some $C>0$ independent of $k$ and $\ep$\,, and
\begin{equation}\label{strict_conclusion2}
v_{\ep,k}\strictly u_\ep \quad\textrm{in }BV(\Omega;\R^2)\qquad\textrm{(as $k\to +\infty$)}\,.
\end{equation}
Furthermore, by standard density arguments, it is enough to construct $\{v_{\ep,k}\}_{k}\subset W^{1,\infty}(\Om;\R^2)$\,.
%
	Let $0<\ep\ll R$ be fixed. 
For every $i=1,\ldots,I$\,, $k\in\N$\,, $ t_1<t_2$ we set $S^i_{\ep,k,t_1,t_2}:=[x_1^i-\frac{\ep}{k}, x_1^i+\frac{\ep}{k}]\times [x_2^i+t_1, x_2^i+t_2]$\,.
	We will modify $u_\ep$ in the set $S^i_{\ep,k,-\ep-\frac\ep k,\frac\ep k}$, that is a neighborhood of the jump set $S^i_\ep$ in \eqref{saltini}. Precisely, for every $i=1,\ldots,I$, and for every $k\in \N$, let $u^i_{\ep,k,\mathrm{int}}\in C^\infty(S^i_{\ep,k,-\ep-\frac\ep k,\frac\ep k};\R^2)$ be defined as
	\begin{eqnarray*}
		u^i_{\ep,k,\mathrm{int}}(x)&:=&\Big(\frac12-\frac{k}{2\ep}(x_1-x_1^i)\Big)u_\ep\Big(x_1^i-\frac{\ep}{k};x_2\Big)\\
		&&+\Big(\frac12+\frac{k}{2\ep}(x_1-x_1^i)\Big)u_\ep\Big(x_1^i+\frac{\ep}{k};x_2\Big).
	\end{eqnarray*}
	Moreover, for every $i=1,\ldots,I$, and for every $k\in\N$ we denote by  $u^i_{\ep,k,\mathrm{bottom}}\in C^\infty(S^i_{\ep,k,-\ep-\frac\ep k,-\ep};\R^2)$ and $u^i_{\ep,k,\mathrm{up}}(S^i_{\ep,k,0,\frac\ep k};\R^2)$ the functions
	\begin{eqnarray*}
		u^i_{\ep,k,\mathrm{bottom}}(x)&:=&\frac{k}{\ep}(x_2^i-\ep-x_2)u_\ep(x)+\Big(1-\frac{k}{\ep}(x_2^i-\ep-x_2)\Big)u^i_{\ep,k,\mathrm{int}}(x)\,,\\
			u^i_{\ep,k,\mathrm{up}}(x)&:=&\frac{k}{\ep}(x_2-x_2^i)u_\ep(x)+\Big(1-\frac{k}{\ep}(x_2-x_2^i)\Big)u^i_{\ep,k,\mathrm{int}}(x)\,.
	\end{eqnarray*}
Finally we define $v_{\ep,k}:\Omega\to\R^2$ as 
\begin{equation*}
	v_{\ep,k}(x):=\left\{\begin{array}{ll}
		u^i_{\ep,k,\mathrm{int}}(x)
	&\textrm{if }x\in S^i_{\ep,k,-\ep,0}\textrm{ for some }i\\
		u_{\ep,k,\mathrm{up}}^i(x)&\textrm{if }x\in S^i_{\ep,k,0,\frac{\ep}{k}}\textrm{ for some }i\\
		u_{\ep,k,\mathrm{bottom}}^i(x)&\textrm{if }x\in S^i_{\ep,k,-\ep-\frac{\ep}{k},-\ep}\textrm{ for some }i\\\
		u_\ep(x)&\textrm{elsewhere in }\Omega\,.
	\end{array}
	\right.
\end{equation*}
An easy check shows that the function $v_{\ep,k}:\Omega\to\R^2$ is Lipschitz continuous in $\Om$. 
Indeed, $u^i_{\ep,k,\mathrm{int}}$ is Lipschitz continuous in $S^i_{\ep,k,0,\frac\ep k}$ (and smooth in its interior), and $u^i_{\ep,k,\mathrm{bottom}}$ and $u^i_{\ep,k,\mathrm{up}}$ are Lipschitz as well in $S^i_{\ep,k,-\ep-\frac{\ep}{k},-\ep}$ and  $S^i_{\ep,k,0,\frac{\ep}{k}}$ (respectively). 

We now check that $\{v_{\ep,k}\}_{k\in\N}$ satisfies \eqref{strict_conclusion2}.
To this purpose it is enough to show that 
\begin{align*}
\lim_{k\to +\infty}\int_{S^i_{\ep,k,-\ep-\frac\ep k,\frac\ep k}}|\nabla v_{\ep,k}|\ud x=\int_{S^i_\ep}|[u_\ep]|\ud\mathcal H^1,
\end{align*}
for all $i=1,\dots,I$.
Furthermore, since $v_{\ep,k}\to u_\ep$ pointwise a.e. in $\Om$ (as $k\to +\infty$), it is sufficient to prove that 
\begin{align}\label{stimaperlimsup}
	\limsup_{k\rightarrow+\infty}\int_{S^i_{\ep,k,-\ep-\frac\ep k,\frac\ep k}}|\nabla v_{\ep,k}|\ud x\leq \int_{S^i_\ep}|[u_\ep]|\ud\mathcal H^1\,,
\end{align} 
for all $i=1,\dots,I$. We notice that
\begin{align*}
	&\frac{\partial u^i_{\ep,k,\textrm{int}}}{\partial x_1}(x)=\frac{k}{2\ep}u_\ep\Big(x_1^i-\frac{\ep}{k};x_2\Big)-\frac{k}{2\ep}u_\ep\Big(x_1^i+\frac{\ep}{k};x_2\Big)\,,\\
	&\frac{\partial u^i_{\ep,k,\textrm{int}}}{\partial x_2}(x)=\Big(\frac12-\frac{k}{2\ep}(x_1-x_1^i)\Big)\frac{\partial u_\ep}{\partial x_2}\Big(x_1^i-\frac{\ep}{k};x_2\Big)+\Big(\frac12+\frac{k}{2\ep}(x_1-x_1^i)\Big)\frac{\partial u_\ep}{\partial x_2}\Big(x_1^i+\frac{\ep}{k};x_2\Big)\,,
\end{align*}
	whence, using that
\begin{equation}\label{costru}
|x_1-x_1^i|\leq\frac\ep k\textrm{ in }S^i_{\ep,k,-\ep-\frac\ep k,\frac \ep k}\,,
\end{equation}	
we deduce
\begin{equation}\label{cisiamo}
\begin{aligned}
\int_{S^i_{\ep,k,-\ep,0}}|\nabla v_{\ep,k}|\ud x=&\int_{S^i_{\ep,k,-\ep,0}}|\nabla u^i_{\ep,k,\textrm{int}}|\ud x\\
\leq& \int_{S^i_{\ep,k,-\ep,0}}\Big|\frac{\partial u^i_{\ep,k,\textrm{int}}}{\partial x_1}\Big|\ud x+\int_{S^i_{\ep,k,-\ep,0}}\Big|\frac{\partial u^i_{\ep,k,\textrm{int}}}{\partial x_2}\Big|\ud x\\
\leq& \int_{-\ep}^0\Big|u_\ep\Big(x_1^i+\frac{\ep}{k};x_2\Big)-u_\ep\Big(x_1^i-\frac{\ep}{k};x_2\Big) \Big|\ud x_2\\
&+\int_{-\ep}^0\int_{-\frac\ep k}^{\frac\ep k}\Big|\frac{\partial u_\ep}{\partial x_2}\Big(x_1^i-\frac{\ep}{k};x_2\Big)\Big|+\Big|\frac{\partial u_\ep}{\partial x_2}\Big(x_1^i+\frac{\ep}{k};x_2\Big)\Big|\ud x_1\ud x_2\\
=&\int_{S^i_\ep}|[u_\ep]|\ud\mathcal H^1+\mathrm{o}_k(1)\,,
\end{aligned}
\end{equation}
where $\mathrm{o}_k(1)$ tends to $0$ as $k\rightarrow+\infty$\,. 
Furthermore,
	\begin{align*}
	&\frac{\partial	u^i_{\ep,k,\mathrm{bottom}}}{\partial x_1}(x)=\frac{k}{\ep}(x_2^i-\ep-x_2)\frac{\partial u_\ep}{\partial x_1}(x)+\Big(1-\frac{k}{\ep}(x_2^i-\ep-x_2)\Big)\frac{\partial u^i_{\ep,k,\mathrm{int}}}{\partial x_1}(x)\,,\\
	&\frac{\partial	u^i_{\ep,k,\mathrm{bottom}}}{\partial x_2}(x)=-\frac{k}{\ep}u_\ep(x)+\frac{k}{\ep}u^i_{\ep,k,\mathrm{int}}(x)+\frac{k}{\ep}(x_2^i-\ep-x_2)\frac{\partial u_\ep}{\partial x_2}(x)\\
	&\phantom{\frac{\partial	u^i_{\ep,k,\mathrm{bottom}}}{\partial x_2}(x)=}+\Big(1-\frac{k}{\ep}(x_2^i-\ep-x_2)\Big)\frac{\partial u^i_{\ep,k,\mathrm{int}}}{\partial x_2}(x)\,,\\
	&\frac{\partial	u^i_{\ep,k,\mathrm{up}}}{\partial x_1}(x)=\frac{k}{\ep}(x_2-x_2^i)\frac{\partial u_\ep}{\partial x_1}(x)+\Big(1-\frac{k}{\ep}(x_2-x_2^i)\Big)\frac{\partial u^i_{\ep,k,\mathrm{int}}}{\partial x_1}(x)\,,\\
	&\frac{\partial	u^i_{\ep,k,\mathrm{up}}}{\partial x_2}(x)=\frac{k}{\ep}u_\ep(x)-\frac{k}{\ep}u^i_{\ep,k,\mathrm{int}}(x)+\frac{k}{\ep}(x_2-x_2^i)\frac{\partial u_\ep}{\partial x_2}(x)+\Big(1-\frac{k}{\ep}(x_2-x_2^i)\Big)\frac{\partial u^i_{\ep,k,\mathrm{int}}}{\partial x_2}(x)\,;
\end{align*}
hence, using that {$|u_\ep|,|u^i_{\ep,k,\textrm{int}}|\le 1$\,,}
 \eqref{costru} and the fact that
\begin{equation*}
 |\ep+x_2-x_2^i|\leq\frac\ep k\textrm{ in }S^i_{\ep,k,-\ep-\frac\ep k,-\ep}\,,\quad
|x_2-x_2^i|\leq\frac\ep k\textrm{ in } S^i_{\ep,k,0,\frac\ep k},
\end{equation*}
we can estimate
{
\begin{equation}\label{estimate_sup}
\begin{aligned}
&\Big|\frac{\partial u^i_{\ep,k,\textrm{int}}}{\partial x_1}(x)\Big|\le \frac{k}{\ep}\,,\\
&\Big|\frac{\partial u^i_{\ep,k,\textrm{int}}}{\partial x_2}(x)\Big|\leq \Big|\nabla u_\ep \Big(x^i_1-\frac{\ep}{k};x_2\Big)\Big|+\Big|\nabla u_\ep \Big(x^i_1+\frac{\ep}{k};x_2\Big)\Big|\,,\\
&\Big|\frac{\partial	u^i_{\ep,k,\mathrm{bottom}}}{\partial x_1}(x)\Big|\leq |\nabla u_\ep(x)|+\frac{k}{\ep}\,,\\
&\Big|\frac{\partial	u^i_{\ep,k,\mathrm{bottom}}}{\partial x_2}(x)\Big|\leq 2\frac k \ep+|\nabla u_\ep(x)|+\Big|\nabla u_\ep \Big(x^i_1-\frac{\ep}{k};x_2\Big)\Big|+\Big|\nabla u_\ep \Big(x^i_1+\frac{\ep}{k};x_2\Big)\Big|\,,\\
& \Big|\frac{\partial	u^i_{\ep,k,\mathrm{up}}}{\partial x_1}(x)\Big|\leq  |\nabla u_\ep(x)|+\frac{k}{\ep}\,,\\
& \Big|\frac{\partial	u^i_{\ep,k,\mathrm{up}}}{\partial x_2}(x)\Big|\leq 2\frac k \ep+|\nabla u_\ep(x)|+\Big|\nabla u_\ep \Big(x^i_1-\frac{\ep}{k};x_2\Big)\Big|+\Big|\nabla u_\ep \Big(x^i_1+\frac{\ep}{k};x_2\Big)\Big|\,.
\end{aligned}
\end{equation}
}
By \eqref{estimate_sup} and {\eqref{stimadelimperatore}} it follows, in particular, that the integrals of $|\nabla v_{\ep,k}|$ on  $S^i_{\ep,k,-\ep,-\ep-\frac\ep k}$ and $S^i_{\ep,k,0,\frac\ep k}$ are both negligible as $k\rightarrow\infty$\,. This fact, together with \eqref{cisiamo}, implies \eqref{stimaperlimsup} and, in turn, \eqref{strict_conclusion2}.

In order to prove \eqref{quasitre2}, we notice that,
in view of \eqref{estimate_sup} {and of \eqref{unicotre},}
%
it holds
\begin{equation}\label{stimaprima}
\begin{aligned}
	\int_{S^i_{\ep,k,-\ep,0}}|J v_{\ep,k}|\ud x=&	\int_{S^i_{\ep,k,-\ep,0}}|Ju^i_{\ep,k,\textrm{int}}|\ud x\\
	\le&\int_{S^i_{\ep,k,-\ep,0}}\Big|\frac{\partial u^i_{\ep,k,\textrm{int}}}{\partial x_1}(x)\Big| \Big|\frac{\partial u^i_{\ep,k,\textrm{int}}}{\partial x_2}(x)\Big|\ud x\\
	\le& 2\int_{-\ep}^{0} \Big|\nabla u_\ep \Big(x^i_1-\frac{\ep}{k};x_2\Big)\Big|+\Big|\nabla u_\ep \Big(x^i_1+\frac{\ep}{k};x_2\Big)\Big|\ud x_2\\
	\le&C\,,
	\end{aligned}
	\end{equation}
	for some constant $C$ independent of $\ep$ and $k$\,.
	
	Analogously, using again \eqref{estimate_sup} and \eqref{stimadelimperatore}, one can prove that there exists a universal constant $C>0$ such that
	\begin{equation}\label{stimaseconda}
	\begin{aligned}
	\int_{S^i_{\ep,k,-\ep-\frac\ep k,-\ep}}|J v_{\ep,k}|\ud x=&	\int_{S^i_{\ep,k,-\ep-\frac\ep k,-\ep}}|Ju^i_{\ep,k,\textrm{bottom}}|\ud x
	\le C\,,\\
	\int_{S^i_{\ep,k,0,\frac\ep k}}|J v_{\ep,k}|\ud x=&	\int_{S^i_{\ep,k,0,\frac\ep k}}|Ju^i_{\ep,k,\textrm{up}}|\ud x
	\le C\,.
\end{aligned}
\end{equation}
By \eqref{stimaprima} and \eqref{stimaseconda}, the estimate in \eqref{quasitre2} follows.
				\medskip
				
				\textbf{Step 3: Proof of \eqref{strict_conclusion}.} 
			It is easy to see that the functions $\vartheta_\ep$ tend pointwise to $\bar\vartheta$ defined as	
					\begin{equation*}
					\bar\vartheta(x):=\left\{\begin{array}{ll}
						z^i\vartheta(x-x^i)&\textrm{if }x\in B_{R}(x^i)\textrm{ for some }i\\
						(1-\sigma_R(x-x^i))z^i\vartheta(x-x^i)+\sigma_{R}(x-x^i)\Theta(x)&\textrm{if }x\in A_{R,2R}(x^i)\textrm{ for some }i\\
						\Theta(x)&\textrm{if } x\in\Omega_{2R}(\mu).
					\end{array}
					\right.
				\end{equation*}
				By definition, the function $\bar u$ defined by $\bar u(x):=e^{\imath\bar\vartheta(x)}$ belongs to $W^{1,1}(\Om;\Ss^1)$ and satisfies $\textrm{Det}(\nabla \bar u)=\pi\mu$\,.
				Notice moreover that, since $\bar\vartheta-\vartheta_\ep\neq0$ only on $\bigcup_{i=1}^IB_\ep(x^i)$ and $\bar u\in W^{1,1}(\Om;\Ss^1)$, we can prove that $u_\ep\strictly \bar u$ in $BV(\Omega;\R^2)$ if we show that 
				\begin{equation}\label{verofin}
				|\mad u_\ep|(\cup_{i=1}^I\overline B_\ep(x^i))\rightarrow0\qquad\textrm{ as }\ep\to 0\,.
				\end{equation}
				But, by \eqref{stimadelimperatore} and by H\"older inequality
				$$
				\int_{B_\ep(x^i)}|\nabla u_\ep|\ud x\leq C\ep\Big(\int_{B_\ep(x^i)}|\nabla u_\ep|^2\Big)^{\frac12}\ud x\leq C\ep\qquad \forall i=1,\dots,I\,,
				$$
				which together with \eqref{uno} implies \eqref{verofin}.
				This concludes the proof of \eqref{strict_conclusion}, and in turn of \eqref{conclusionflat} and of the whole Theorem \ref{mainthm}.
			\end{proof}
\subsection*{Dirichlet boundary conditions}
We conclude this section dealing with prescribed boundary conditions on $\partial\Omega$\,.
To this purpose, let $\widetilde{\Omega},\widehat \Om\subset\R^2$ be open bounded sets with {$\widetilde\Omega\subset\subset\Om\subset\subset\widehat \Om$}\,, and let $w\in H^1(\widehat\Om\setminus\widetilde\Omega;\Ss^1)$\,. Admissible functions for the problem with boundary conditions will be given by 
\begin{equation*}
	\mathcal {AD}^w(\Om):=\{u\in SBV^2(\widehat \Om;\Ss^1):u=w\text{ on }\widehat\Om\setminus \overline\Om\}.
\end{equation*}
For every $\ep>0$\,, let $\F^w_\ep:L^2(\Omega;\R^2)\to [0,\infty]$ be the functional defined by
\begin{equation*}
	\F^w_\ep(u):=\left\{\begin{array}{ll}
		\displaystyle \frac 1 2 \int_{\Omega}|\nabla u|^2\ud x+\frac 1 \ep\Huno(\overline{S}_u)&\textrm{if }u\in \mathcal{AD}^w(\Om)\\
		\displaystyle +\infty&\textrm{elsewhere in }L^2(\Omega;\R^2)\,.
	\end{array}
	\right.
\end{equation*}
Notice that we can have $S_u\cap \partial\Om\neq\varnothing$, and that we take the closure $\overline S_u$ of $S_u$ in $\widehat\Om$ (in particular, the functional $\mathcal F_\ep$ penalizes jumps of $u$ also on $\partial\Om$). 

We show that, up to slight modifications in the proof of Theorem \ref{mainthm}, we can obtain a compactness and $\Gamma$-convergence result for the functional $\F_\ep^w$\,.
Specifically, we will prove the following statement.
\begin{theorem}\label{mainthmW}
	The following $\Gamma$-convergence result holds true.
	\begin{itemize}
		\item[(i)] (Compactness) Let $\{u_\ep\}_{\ep}\subset L^2(\Omega;\R^2)$ be such that 
		\begin{equation}\label{enboundW}
			\sup_{\ep>0}\frac{\F^w_\ep(u_\ep)}{|\log\ep|}\le C,
		\end{equation}
		for some $C>0$\,. Then there exists $\mu\in X(\overline\Omega)$\,, such that , up to a subsequence, $\|Ju_\ep-\pi\mu\|_{\flt,U}\to 0$ (as $\ep\to 0$)\,, for any open set $U$ with $\Omega\subset\subset U\subset\subset\widehat\Omega$.
		{Moreover, it holds 
			\begin{equation}\label{grado_mu}
			\mu(U)=\deg(w,\partial \Om)\,,
			\end{equation} 
		for any open set $U$ with $\Omega\subset\subset U\subset\subset\widehat\Omega$.}
		\item[(ii)] ($\Gamma$-liminf inequality) For every $\mu\in X(\overline\Omega)$ and for every $\{u_\ep\}_{\ep}\subset L^2(\Omega;\R^2)$ such that $\|Ju_\ep-\pi\mu\|_{\flt,U}\to 0$ (as $\ep\to 0$) for any open set $U$ with $\Omega\subset\subset U\subset\subset\widehat\Omega$, it holds
		\begin{equation*}
			\pi|\mu|(\overline\Omega)\le \liminf_{\ep\to 0}\frac{\F^w_\ep(u_\ep)}{|\log\ep|}\,.
		\end{equation*}
		\item[(iii)] ($\Gamma$-limsup inequality) For every $\mu\in X(\overline\Omega)$ satisfying \eqref{grado_mu}, there exists  $\{u_\ep\}_{\ep}\subset L^2(\Omega;\R^2)$ with $\|Ju_\ep-\pi\mu\|_{\flt,U}\to 0$ (as $\ep\to 0$) for any open set $U$ with $\Omega\subset\subset U\subset\subset\widehat\Omega$, such that
		\begin{equation*}
			\pi|\mu|(\overline\Omega)\geq\limsup_{\ep\to 0}\frac{\F^w_\ep(u_\ep)}{|\log\ep|}\,.
		\end{equation*}
	\end{itemize}
\end{theorem}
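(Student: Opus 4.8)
The plan is to reduce Theorem \ref{mainthmW} to the proof of Theorem \ref{mainthm}, carried out with the pair $(\Omega,\Omega')$ replaced by a pair $(U'',\Omega)$, where $U''$ is a smooth open set with $\overline\Omega\subset\subset U''\subset\subset\widehat\Omega$. This substitution is legitimate because, since $u_\ep=w\in H^1(\widehat\Omega\setminus\widetilde\Omega;\Ss^1)$ on $\widehat\Omega\setminus\overline\Omega$, the jump set satisfies $\overline{S}_{u_\ep}\subset\overline\Omega\subset\subset U''$, so the compactness role previously played by the hypothesis $\overline{S}_u\subset\overline{\Omega'}\subset\subset\Omega$ is now played by $\overline{S}_{u_\ep}\subset\overline\Omega\subset\subset U''$. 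For (i), I would first cover $\overline{S}_{u_\ep}$ by a finite family $\B_\ep$ of mutually disjoint balls with $\rad(\B_\ep)\le C\ep|\log\ep|$, all contained in $U''$ for $\ep$ small, so that $u_\ep\in H^1(U''(\B_\ep);\Ss^1)$; arguing as in \eqref{nuovobound1}, and noting that the Dirichlet energy of $u_\ep$ on $U''(\B_\ep)$ now carries the additional but fixed contribution $\int_{U''\setminus\overline\Omega}|\nabla w|^2$, one gets $F(\B_\ep,\mu_\ep,U'')\le C|\log\rad(\B_\ep)|$, and Theorem \ref{alipons}(i) (with $V=U''$) provides $\widetilde\mu_\ep$ with $|\widetilde\mu_\ep|(U'')\le C|\log\ep|$ and, up to a subsequence, $\widetilde\mu_\ep\fla\mu$ for some $\mu\in X(U'')$. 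The new point is that in fact $\mu\in X(\overline\Omega)$: any ball of $\B_\ep(1)$ whose closure is disjoint from $\overline\Omega$ lies, for $\ep$ small, in $\widehat\Omega\setminus\overline\Omega$, where $u_\ep=w\in H^1$ and hence carries zero degree, so $\widetilde\mu_\ep$ is supported in the $C\ep|\log\ep|$-neighbourhood of $\overline\Omega$, and so is $\mu$. Steps 2--7 of the proof of Theorem \ref{mainthm} (fattening of the non-zero clusters, dipole elimination, definition of $\widehat\mu_\ep$, the fields $\lambda_\ep$ and $\widehat\lambda_\ep$, and the final flat estimate) then go through verbatim with $U''$ in the role of $\Omega$, all the relevant balls being compactly contained in $U''$; this yields $\|Ju_\ep-\pi\widetilde\mu_\ep\|_{\flt,U''}\to0$ and hence $\|Ju_\ep-\pi\mu\|_{\flt,U}\to0$ for every $U\subset\subset U''$. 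Finally, applying this to an exhaustion $U_1\subset\subset U_2\subset\subset\cdots\nearrow\widehat\Omega$ with $\overline\Omega\subset U_1$ (and corresponding $U_k''$) and extracting a diagonal subsequence, one obtains a single $\mu\in X(\overline\Omega)$ with $\|Ju_\ep-\pi\mu\|_{\flt,U}\to0$ for every open $U$ with $\Omega\subset\subset U\subset\subset\widehat\Omega$.

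To prove the degree identity \eqref{grado_mu}, I would fix such a $U$ and a smooth open $V$ with $\Omega\subset\subset V\subset\subset U$. On $\partial V\subset\widehat\Omega\setminus\overline\Omega$ one has $u_\ep=w$, so $\deg(u_\ep,\partial V)=\deg(w,\partial V)=\deg(w,\partial\Omega)$ by homotopy invariance in the annulus $V\setminus\overline\Omega$. Choosing $\ffi\in\Cc^\infty(U)$ with $\ffi\equiv1$ on $\overline V$ and integrating by parts over $U\setminus\overline V$ (where $u_\ep=w\in H^1$, hence $Ju_\ep=0$), one finds $\langle Ju_\ep,\ffi\rangle_U=\pi\deg(u_\ep,\partial V)=\pi\deg(w,\partial\Omega)$; letting $\ep\to0$ gives $\pi\mu(U)=\pi\deg(w,\partial\Omega)$.

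For part (ii), arguing exactly as in the proof of Theorem \ref{mainthm}(ii), i.e. combining the analogue of \eqref{nuovobound1} with Theorem \ref{alipons}(ii) and discarding the fixed, $|\log\ep|$-negligible term $\int_{U''\setminus\overline\Omega}|\nabla w|^2$, one obtains $\pi|\mu|(\overline\Omega)\le\pi|\mu|(U'')\le\liminf_{\ep\to0}\frac{\F^w_\ep(u_\ep)}{|\log\ep|}$. For part (iii), by a standard density argument in $\Gamma$-convergence -- perturbing atoms lying on $\partial\Omega$ slightly inward and splitting multiplicities -- it suffices to treat $\mu=\sum_{i=1}^Iz^i\delta_{x^i}$ with $x^i\in\Omega$, $|z^i|=1$, and $\sum_iz^i=\deg(w,\partial\Omega)$. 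Since the boundary degrees are then compatible, $w|_{\partial\Omega}$ extends to a map in $H^1(\Omega\setminus\bigcup_iB_\rho(x^i);\Ss^1)$ with degree $z^i$ around each $x^i$; completing it inside each $B_\rho(x^i)$ by $e^{\imath z^i\vartheta(\cdot-x^i)}$ produces $\bar u\in W^{1,1}(\widehat\Omega;\Ss^1)$ with $\bar u=w$ on $\widehat\Omega\setminus\Omega$ and $\textrm{Det}(\nabla\bar u)=\pi\mu$. The recovery sequence $u_\ep$ is obtained from $\bar u$ by the core regularization of Step 1 of the proof of Theorem \ref{mainthm}(iii), applied only near the $x^i$ (so $u_\ep=\bar u=w$ on $\widehat\Omega\setminus\Omega$, up to a fixed collar); this gives $\Huno(\overline{S}_{u_\ep})=I\frac\ep2$, whence $\frac1{\ep|\log\ep|}\Huno(\overline{S}_{u_\ep})\to0$, and $\limsup_{\ep\to0}\frac1{2|\log\ep|}\int_\Omega|\nabla u_\ep|^2\le\pi I=\pi|\mu|(\overline\Omega)$, proving \eqref{eq:limsup} (in its $\F^w_\ep$ form). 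The convergence $\|Ju_\ep-\pi\mu\|_{\flt,U}\to0$ for every $U$ with $\Omega\subset\subset U\subset\subset\widehat\Omega$ then follows from Remark \ref{flat_rmk}: $\mathrm{supp}(Ju_\ep-\pi\mu)\subset\overline\Omega\subset\subset U$, one has $|Ju_\ep|(U)\le C$ (proved as in Step 2 of the proof of Theorem \ref{mainthm}(iii), approximating $u_\ep$ strictly in $BV(U;\R^2)$ by Lipschitz maps with equibounded Jacobian total variation and invoking Remark \ref{mucci_rmk}), and $u_\ep\strictly\bar u$ in $BV(U;\R^2)$ with $\textrm{Det}(\nabla\bar u)=\pi\mu$ (as in Step 3, the modification being confined to neighbourhoods of the $x^i$).

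The main obstacle is organizational rather than conceptual: the intricate fattening/dipole-elimination construction of Steps 2--7 must be re-run with the reference domain $U''\supset\supset\overline\Omega$ in place of $\Omega$, checking that no Jacobian mass escapes towards $\partial U''$, and the diagonal extraction over the exhaustion of $\widehat\Omega$ must be shown to be consistent with a single limiting measure $\mu$. The genuinely new -- but easy -- inputs are the localization $\mathrm{supp}\,\mu\subset\overline\Omega$ and the degree identity \eqref{grado_mu}, together with the observation (for the upper bound) that matching the datum $w$ costs only $O(1)$ energy precisely because \eqref{grado_mu} removes the topological obstruction to extending $w|_{\partial\Omega}$ inside $\Omega$.
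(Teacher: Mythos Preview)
Your approach is essentially the same as the paper's, with two simplifications you can make. For (i) and (ii), the paper simply applies Theorem \ref{mainthm} once with the pair $(\widehat\Omega,\Omega)$ in place of $(\Omega,\Omega')$, obtaining $\|Ju_\ep-\pi\mu\|_{\flt,\widehat\Omega}\to 0$ directly; since $\supp Ju_\ep\subset\overline\Omega$ (as $u_\ep=w\in H^1(\widehat\Omega\setminus\overline\Omega;\Ss^1)$), the restriction to any intermediate $U$ is automatic and no exhaustion or diagonal extraction is needed. Your degree computation for \eqref{grado_mu} matches the paper's.

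For (iii), the paper takes a slightly different route from your $H^1$-extension argument: it works at the level of liftings, taking the phase $\widetilde\vartheta_\ep$ of the recovery sequence from Theorem \ref{mainthm}(iii), choosing a lifting $\vartheta^w$ of $w$ on $\widehat\Omega\setminus\widetilde\Omega$ whose jump set and jump values match those of $\widetilde\vartheta_\ep$ in $\Omega\setminus\overline{\Omega'}$ (this matching is possible precisely because of \eqref{grado_mu}), and then blending $\widetilde\vartheta_\ep$ into $\vartheta^w$ via a cutoff $\eta$ supported in $\Omega\setminus\Omega'$. This avoids the gluing issue implicit in your sketch, namely that the extended $H^1$ map need not equal $e^{\imath z^i\vartheta(\cdot-x^i)}$ on $\partial B_\rho(x^i)$; the lifting-level interpolation handles the transition for free and keeps $u_\ep\in\mathcal{AD}^w(\Omega)$ exactly. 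Both constructions give the same energy estimates and flat convergence, so your outline is correct in spirit.
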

 \begin{proof}
We divide the proof into three steps corresponding to each of the items in the statement.

\medskip
{\bf Step 1: Proof of (i).} 
Notice that, by \eqref{enboundW}, we can assume without loss of generality that $u_\ep\in \mathcal{AD}(\widehat\Omega, \Om)$ for all $\ep>0$ (see \eqref{admissiblebdrysenzaw}).
 Moreover, recalling the definition of $\F_\ep$ in \eqref{defEne}, by \eqref{enboundW} we have
 \begin{equation}\label{ancheliminf}
 \F_\ep(u_\ep; \widehat\Omega)\le \F_\ep^w(u_\ep)+\|w\|^2_{H^1(\widehat\Omega\setminus\overline{\Omega};\R^2)}\le C|\log\ep|\,.
 \end{equation}
 Hence, we can apply Theorem \ref{mainthm}(i) to deduce that, up to a subsequence, $\|Ju_\ep-\pi\widehat\mu\|_{\flt,\widehat\Omega}\to 0$ for some $\widehat\mu\in X(\widehat\Omega)$\,. {Therefore, since, by construction, 
 $\supp Ju_\ep\subset\overline\Omega$\,, then $\supp \widehat \mu\subset\overline\Om$, and therefore $\|J u_\ep-\pi\mu\|_{\flt,U}\to 0$\,,
 for every open set $U$ with $\Omega\subset\subset U\subset\subset\widehat\Omega$\,.}
Finally, if $U$ is an open set with  $\Omega\subset\subset U\subset\subset\widehat\Omega$\,, we choose a function $\varphi\in \Cc^\infty(\widehat\Om)$ which equals $1$ on $U$, and write 
\begin{align*}
	\pi\mu(U)&=\langle \pi\mu,\varphi\rangle_{\widehat\Om}=\lim_{\ep\to 0}\langle Ju_\ep,\varphi\rangle_{\widehat\Om}=\lim_{\ep\to 0}\int_{\widehat\Om}\nabla \varphi\cdot\ud\lambda_{u_\ep}  =\lim_{\ep\to 0}\int_{\widehat\Om\setminus \overline U}\nabla \varphi\cdot\ud\lambda_{u_\ep}\nonumber\\
	&=\lim_{\ep\to 0}\int_{\partial U}\lambda_{u_\ep}\cdot \nu \ud\mathcal H^1=\lim_{\ep\to 0}\pi\deg(u_\ep,\partial U)=\pi\deg(w,\partial U)\,,
	\end{align*}
where we have used \eqref{def_grado} and the fact that $u_\ep\in \mathcal{AD}^w(\widehat\Om,\Om)$.
 \medskip
 
 {\bf Step 2: Proof of (ii).} We can assume without loss of generality that $\{u_\ep\}_\ep$ satisfies \eqref{enboundW}. By \eqref{ancheliminf} and by Theorem \ref{mainthm}(ii), we obtain
 \begin{equation*}
 \liminf_{\ep\to 0}\frac{\F_\ep^w(u_\ep)}{|\log\ep|}\ge \liminf_{\ep\to 0}\frac{\F_\ep(u_\ep;\widehat\Omega)}{|\log\ep|}\ge \pi|\mu|(\overline\Omega)\,,
 \end{equation*}
 i.e., the claim.
 
 \medskip
 {\bf Step 3: Proof of (iii).} {Since every $\mu\in X(\overline\Om)$ can be approximated by measures in $X(\Om)$ with respect to the flat distance, by standard density arguments in $\Gamma$-convergence, it is enough to construct the recovery sequence only for measures $\mu\in X(\Om)$.
 	
 	 Let $\widetilde\Omega\subset\subset\Omega'\subset\subset\Omega$ be an open set such that $\supp\mu\subset\Omega'$\,.
 Let moreover $\eta\in C^\infty(\Omega\setminus\Omega')$ be a cut-off function with $\eta\equiv 0$ in a neighborhood of $\partial\Omega'$ and $\eta\equiv 1$ in a neighborhood of $\partial\Omega$\,. 
 Let furthermore $\{\widetilde u_\ep\}_\ep=\{e^{\imath \widetilde\vartheta_\ep}\}_\ep$ indicate the sequence provided by \eqref{recovery}\,. 
 Then there exists a function $\vartheta^w\in SBV^2(\widehat\Omega\setminus\widetilde\Omega)$ such that $w=e^{\imath\vartheta^w}$ on $\widehat\Om\setminus \widetilde\Om$\,,  $S_{\vartheta^w}=S_{\widetilde\vartheta_\ep}$ in $\Om\setminus \overline{\Om'}$\,, and  $[\vartheta^w]=[\widetilde\vartheta_\ep]$ in $\Om\setminus \overline{\Om'}$\,. The last property can be achieved thanks to the fact that $\mu$ satisfies condition \eqref{grado_mu}.

 For every $\ep>0$ we define the function $u_\ep:\widehat\Omega\to\Ss^1$ as $u_\ep(\cdot):=e^{\imath\vartheta_\ep(\cdot)}$\,, where the lifting $\vartheta_\ep$ is defined by
 \begin{equation*}
 \vartheta_\ep(x):=\left\{\begin{array}{ll}
 \widetilde\vartheta_\ep(x)&\textrm{if }x\in \Omega'\\
 (1-\eta(x))\widetilde\vartheta_\ep(x)+\eta(x) \vartheta^w(x)&\textrm{if }x\in\Omega\setminus \Omega'\\
 \vartheta^w(x)&\textrm{if }x\in\widehat\Omega\setminus \Omega\,.
 \end{array}
 \right.
 \end{equation*}
 {It is easy to check that the sequence $\{u_\ep\}_\ep$ satisfies the desired properties.}}
 \end{proof}





\begin{thebibliography}{99}

\bibitem{ABO} Alberti, G., Baldo, S., Orlandi, G.: Variational convergence for functionals of Ginzburg-Landau type. {\it Indiana Univ. Math. J.} \textbf{54} (2005), 1411--1472.


\bibitem{ABCDP} {Alicandro, R., Braides, A., Cicalese, M., Piatnitski, A.}. Topological Singularities in Periodic Media: Ginzburg-Landau and Core-Radius Approaches. {\it Arch. Rational Mech. Anal.} {\bf 243} (2022), 559--609.

\bibitem{AC} Alicandro, R., Cicalese, M.: Variational Analysis of the Asymptotics of the $XY$ Model. {\it Arch. Ration. Mech. Anal.} \textbf{192} (2009), 501--536.

\bibitem{ACD}{Alicandro, R., Cicalese, M., De Luca, L.}: Screw dislocations in periodic media: variational coarse graining of the discrete elastic energy. {\it Nonlinear Anal.} (2022), in press.

\bibitem{ACP} Alicandro, R., Cicalese, M., Ponsiglione, M.: Variational equivalence between Ginzburg- Landau, $XY$ spin systems and screw dislocations energies. {\it Indiana Univ. Math. J.} {\bf 60} (2011), 171--208.

\bibitem{ADGP} Alicandro, R., De Luca, L., Garroni, A., Ponsiglione, M.: Metastability and dynamics of discrete topological singularities in two dimensions: a $\Gamma$-convergence approach. {\it Arch. Rational Mech. Anal.} {\bf 214} (2014), 269--330.

\bibitem{ADLPP} {Alicandro, R., De Luca, L., Lazzaroni, G., Palombaro, M., Ponsiglione, M.}: Coarse-graining of a discrete model for edge dislocations in the regular triangular lattice.  Preprint (2022). https://cvgmt.sns.it/paper/5425/

\bibitem{AP} Alicandro, R., Ponsiglione, M.: Ginzburg-Landau functionals and renormalized energy: A revised $\Gamma$-convergence approach. {\it J. Funct.  Anal.} \textbf{266} (2014), 4890--4907.


\bibitem{AFP}{Ambrosio, L., Fusco, N., Pallara, D.}: {\it Functions of bounded variation and free discontinuity problems}. Clarendon Press Oxford, 2000.

\bibitem{AO} Ariza, M. P., Ortiz, M.: Discrete crystal elasticity and discrete dislocations in crystals. {\it Arch. Rational Mech. Anal.} {\bf 178} (2005), 149--226.

%
%
%
%


\bibitem{BCS} {Bellettini, G., Carano, S., Scala, R.}: The relaxed area of $S^1$-valued singular maps in the strict BV-convergence. Preprint (2022). https://cvgmt.sns.it/paper/5440/

\bibitem{BBH} {Bethuel, F., Brezis, H., H\'elein, F.}: {\it Ginzburg-Landau vortices}. Progress in Nonlinear Differential Equations and Their Applications, vol.13, Birkh\"auser Boston, Boston (MA), 1994.
%


\bibitem{BOS}{Bethuel, F., Orlandi, G., Smets, D.: }On an open problem for Jacobians raised by
Bourgain, Brezis and Mironescu, \textit{C. R. Acad. Sci. Paris, Ser. I} \textbf{337} (2003), 381–-385.


\bibitem{BFM}{Bourdin, B., Francfort, G.A.,  Marigo, J.-J.}: {The variational approach to fracture}. {\it J. Elasticity} {\bf 91} (2008), 5--148.

\bibitem{BB} Bourgain, J.,  Brezis, H., Mironescu, P.: Lifting in Sobolev spaces. {\it J. Anal. Math.} {\bf 80} (2000), 37--86.


%

%


\bibitem{BN} Brezis, H.,  Nirenberg, L.: Degree theory and BMO: Part i: compact manifolds without
boundaries. {\it Selecta Math.} (N.S.) {\bf 1} (1995), 197--263.

\bibitem{BrNg1} {Brezis, H., Nguyen, H.-M.}: {The Jacobian determinant revisited}. {\it Invent. Math.} {\bf 185} (2011), 17--54.

\bibitem{BrNg2} {Brezis, H., Nguyen, H.-M.}: {On the distributional Jacobian of maps from $\Ss^N$ into $\Ss^N$ in
fractional Sobolev and H\"older spaces.} {\it Ann. Math.} {\bf 173} (2011), 1141--1183.

\bibitem{CaGa}
Cacace, S., Garroni, A.: A multi-phase transition model for dislocations with interfacial microstructure. {\it Interfaces Free Bound.} {\bf 11} (2009),  291--316.

\bibitem{CGM}
Conti, S., Garroni, A., Massaccesi, A.: Modeling of dislocations and relaxation of functionals on 1-currents with discrete multiplicity. {\it Calc. Var. Partial Differ. Equ.} {\bf 54} (2015), 1847--1874.

\bibitem{CGO}
Conti, S., Garroni, A., Ortiz, M.: The line-tension approximation as the dilute limit of linear-elastic dislocations. {\it Arch. Rational Mech. Anal.} {\bf 218} (2015), 699?755

\bibitem{DM}
Dal Maso, G.: {\it An Introduction to $\Gamma$-Convergence}.
Progress in Nonlinear Differential Equations and Their Applications, vol. 8, Birkh\"auser Boston, Boston (MA), 1993.


\bibitem{DL}
De Luca, L.: $\Gamma$-convergence analysis for discrete topological singularities: The anisotropic triangular lattice and the long range interaction energy. {\it Asymptot. Anal.} \textbf{96} (2016), 185--221.

\bibitem{DGP}
De Luca, L., Garroni, A., Ponsiglione, M.: $\Gamma$-convergence analysis of systems of edge dislocations: the self energy regime.
{\it Arch. Rational Mech. Anal.} {\bf  206} (2012), 885--910.

\bibitem{DLP}
De Luca, L., Ponsiglione, M.: Low energy configurations of topological singularities in two dimensions: a $\Gamma$-convergence analysis of dipoles. {\it Comm. Contemp. Math.} {\bf 22} (2020), 1950019.
%

\bibitem{FPP}
Fanzon, S., Palombaro, M., Ponsiglione, M.: A variational model for dislocations at semi-coherent interfaces. {\it J. Nonlinear Sci.} {\bf 27} (2017), 1436--1461.

\bibitem{FPS}
Fanzon, S., Ponsiglione, M., Scala, R.: Uniform distribution of dislocations in Peierls-Nabarro models for semi-coherent interfaces. {\it Calc. Var. Partial Differ. Equ.} {\bf 59} (2020), 1--26.

\bibitem{FG}
Focardi, M., Garroni, A.: A 1D Macroscopic Phase Field Model for Dislocations and a Second Order $\Gamma$-Limit. {\it SIAM Multiscale Model. Simul.} {\bf 6} (2007), 1098--1124.

\bibitem{GLP}
Garroni, A., Leoni, G., Ponsiglione, M.: Gradient theory for plasticity via homogenization of discrete dislocations. {\it J. Eur. Math. Soc.} {\bf 12} (2010), 1231--1266.
%
%

\bibitem{GMS}
Garroni, A., Marziani, R., Scala, R.: Derivation of a line-tension model for dislocations from a nonlinear three-dimensional energy: The case of quadratic growth. {\it  SIAM J. Math. Anal.} {\bf 53} (2021), 4252--4302.

\bibitem{GaMu}
Garroni, A., M\"uller, S.: A Variational Model for Dislocations in the Line Tension Limit. {\it Arch. Rational Mech. Anal.} {\bf 181} (2006), 535--578.

\bibitem{GMS1} Giaquinta, M., Modica, L., Sou\v{c}ek, J.: {\it Cartesian Currents in the Calculus of Variations I}, vol. {\bf 37},
Springer-Verlag, Berlin, 1998.
%


\bibitem{HL} {Hirth, J.P., Lothe, J.}: {\it Theory of Dislocations}, Krieger Publishing Company, Malabar, Florida,
1982.

\bibitem{Hoc}
Hochrainer, T.: Moving dislocations in finite plasticity: a topological approach. {\it ZAMM Z. Anew. Math. Mech.} {\bf 93} (2013), 252--268.

\bibitem{HB} Hull, D., Bacon, D.J.: {\it Introduction to dislocations}, Butterworth-Heinemann, 2011.

\bibitem{J} {Jerrard, R.L.}: Lower bounds for generalized Ginzburg-Landau functionals. {\it SIAM J. Math. Anal.} \textbf{30} (1999), 721--746.

\bibitem{Jerrard} {Jerrard, R.L., Jung, N.}: {Strict convergence and minimal liftings in $BV$}. {\it Proc. Royal Soc. Edinburgh} {\bf 134} (2004), 1163--1176.

\bibitem{JS} {Jerrard R.L., Soner H.M.}: The Jacobian and the Ginzburg-Landau energy. {\it Calc. Var. Partial Differ. Equ.} {\bf 14} (2002),151--191.

\bibitem{JS2} Jerrard, R. L., Soner, H.M.: Limiting behavior of the Ginzburg-Landau functional. {\it J. Funct. Anal.} {\bf 192} (2002), 524--561.



%
\bibitem{Lo1} London, F.: {\it Superfluids. Macroscopic Theory of
Superconductivity. Vol. I}, Wiley, New York, 1950. Revised 2nd Ed., Dover,
New York, 1961.

\bibitem{Lo2} London, F.:  {\it Superfluids. Macroscopic Theory of Superfluid Helium. Vol. II}, Wiiley, New York, 1954.
Revised 2nd Ed., Dover, New York, 1964.

\bibitem{Mermin}
Mermin, N.D.: The topological theory of defects in ordered media.
\textit{Rev. Mod. Phys.} {\bf 51} (1979), 591--648.

\bibitem{Mor}
Morrey, C.B.: {\it Multiple Integrals in the Calculus of Variations.} Die Grundlehren der mathematischen
Wissenschaften, vol. 130. Springer, New York (1966)

\bibitem{M}{Mucci, D.}: Strict convergence with equibounded area and minimal completely vertical liftings. {\it Preprint} (2022). https://cvgmt.sns.it/paper/5480/

\bibitem{Mul} {M\"uller, S.}: {Det = det. A remark on the distributional determinant.} {\it C. R. Acad. Sci. Paris}
{\bf 311} (1990), 13--17.


\bibitem{MSZ} {M\"uller, S., Scardia, L., Zeppieri, C.I.}: {Geometric rigidity for incompatible fields and an application to strain-gradient plasticity}. {\it Indiana Univ. Math. J.} {\bf 63} (2014), 1365--1396. 

%

\bibitem{P} {Ponsiglione, M.}: Elastic energy stored in a crystal induced by screw dislocations: from discrete to continuous. {\it SIAM J. Math. Anal.} \textbf{39} (2007), 449--469.

\bibitem{S} {Sandier, E.}: Lower bounds for the energy of unit vector fields and applications. {\it J. Funct. Anal.} \textbf{152} (1998), 379--403.


\bibitem{SS2} {Sandier, E., Serfaty, S.}: {\it Vortices in the Magnetic Ginzburg-Landau Model}, Progress in Nonlinear Differential Equations and Their Applications, vol. 70, Birkh\"auser Boston, Boston (MA), 2007.


\bibitem{SVG3}
Scala, R., Van Goethem, N.: Currents and dislocations at the continuum scale. {\it Methods Appl. Anal.} {\bf 23} (2016), 1--34.

\bibitem{SVG1}  {Scala, R., Van Goethem, N.}: A variational approach to single crystals with dislocations. {\it SIAM J. Math. Anal.} \textbf{51} (2019),  489--531.


\bibitem{SVG2}  {Scala, R., Van Goethem, N.}: Variational evolution of dislocations in single crystals.  {\it J. Nonlinear Sci.} \textbf{29} (2019), 319--344.


\bibitem{VGD}  {Van Goethem, N., Dupret, F.}: A distributional approach to 2D Volterra dislocations at the continuum scale.
{\it Eur. J. Appl. Math.} \textbf{23}  (2012), 417--439.



\end{thebibliography}
\end{document}